\definecolor{vegasgold}{rgb}{0.77, 0.7, 0.35}
\definecolor{darkgoldenrod}{rgb}{0.72, 0.53, 0.04}
\definecolor{gold(metallic)}{rgb}{0.83, 0.69, 0.22}
\DeclareFontFamily{U}{wncy}{}
\DeclareFontShape{U}{wncy}{m}{n}{<->wncyr10}{}
\DeclareSymbolFont{mcy}{U}{wncy}{m}{n}
\DeclareMathSymbol{\Sh}{\mathord}{mcy}{"58}
\newtheorem{theorem}{Theorem}[section]
\newtheorem{lemma}[theorem]{Lemma}
\newtheorem*{theorem*}{Theorem}
\newtheorem*{ass*}{Assumption}
\newtheorem{definition}[theorem]{Definition}
\newtheorem{remark}[theorem]{Remark}
\newtheorem{proposition}[theorem]{Proposition}
\newcommand{\cF}{\mathcal{F}}
\newcommand{\cK}{\mathcal{K}}
\newcommand{\cG}{\mathcal{G}}
\newcommand{\Z}{\mathbb{Z}}
\newcommand{\p}{\mathfrak{p}}
\newcommand{\Q}{\mathbb{Q}}
\newcommand{\F}{\mathbb{F}}
\newcommand{\cL}{\mathcal{L}}
\newcommand{\cC}{\mathcal{C}}
\newcommand{\cN}{\mathcal{N}}
\newcommand{\cO}{\mathcal{O}}
\newcommand{\cS}{\mathcal{S}}
\newcommand{\op}[1]{\operatorname{#1}}
\newcommand\mtx[4] { \left( {\begin{array}{cc}
 #1 & #2 \\
 #3 & #4 \\
 \end{array} } \right)}
\numberwithin{equation}{section}
\begin{document}

\title[Deformations with large $p$-rank]{Deformations of reducible Galois representations with large Selmer $p$-rank}

\author[E.~Ghate]{Eknath Ghate}
\address[Ghate]{Tata Institute of Fundamental Research, Homi Bhabha
Road, Mumbai - 400005, India}
\email{eghate@math.tifr.res.in}

\author[A.~Ray]{Anwesh Ray}
\address[Ray]{Chennai Mathematical Institute, H1, SIPCOT IT Park, Kelambakkam, Siruseri, Tamil Nadu 603103, India}
\email{anwesh@cmi.ac.in}

\keywords{Deformations of Galois representations, Bloch--Kato Selmer groups, p-ranks, Iwasawa theory}
\subjclass[2020]{11R23, 11F80}

\maketitle

\begin{abstract}
Let $p\geq 5$ be a prime number. In this paper, we construct Galois representations associated with modular forms for which the dimension of the $p$-torsion in the Bloch-Kato Selmer group can be made arbitrarily large. Our result extends similar results known for small primes, such as Matsuno's work on Tate-Shafarevich groups of elliptic curves. Extending the technique of Hamblen and Ramakrishna, we lift residually reducible Galois representations to modular representations for which the associated Greenberg Selmer groups are minimally generated by a large number of elements over the Iwasawa algebra. We deduce that there is an isogenous lattice for which the Bloch--Kato Selmer group has large $p$-rank.
\end{abstract}

\section{Introduction}
\subsection{Main result}
\par Let $h$ be a normalized Hecke eigencuspform of weight $k\geq 2$ and $p$ be an odd prime number. Let $F$ be the number field generated by the Fourier coefficients of $h$. Let $\p|p$ be a prime of $F$, set $\cK:=F_{\p}$ and $\cO:=\cO_{\cK}$. Associated to $h$ is Deligne's $2$-dimensional representation \[\rho_{V_h}: \op{Gal}(\bar{\Q}/\Q)\rightarrow \op{Aut}_{\cK}(V_h)\xrightarrow{\sim} \op{GL}_2(\cK).\] Let $V$ be a self-dual twist of $V_h$, i.e., $V$ is isomorphic to its Tate-dual $\op{Hom}(V, \cK(1))$. Choose a Galois stable $\cO$-lattice $T$ contained in $V$, and let
\[\rho_T:\op{Gal}(\bar{\Q}/\Q)\rightarrow \op{Aut}_{\Z_p}(T)\xrightarrow{\sim} \op{GL}_2(\cO)\] be the integral Galois representation. Let $A:=V/T$ and let $\op{Sel}_{\op{BK}}(A/\Q)$ be the associated Bloch-Kato Selmer group. In this article we construct Galois representations for which $\dim_{\cO/\p}\op{Sel}_{\op{BK}}(A/\Q)[\p]$ is arbitrarily large. Related results have been proven in various contexts for small prime numbers. Matsuno \cite{matsuno} proved, using the Iwasawa theory of elliptic curves that for primes $p\in \{2,3,5,7, 13\}$ and any number $N>0$, there exists an elliptic curve $E_{/\Q}$ for which $\dim_{\F_p}\Sh(E/\Q)[p]\geq N$. For the prime $p=2$, see the work by Flynn \cite{Flynn} who proves this result for certain abelian surfaces. Bartel \cite{AlexBartel} studied the growth of the Tate--Shafarevich group in certain dihedral extensions of the form $D_{2p}$. Related work of \v Cesnavi\v cius \cite{cesnavicius} studies the growth of the Selmer group of an abelian variety over a number field $K$ in $p$-cyclic extensions $K'/K$. In particular, the $p$-torsion in the Selmer group over $K'$ can have large dimension. The distribution of Tate--Shafarevich groups in quadratic twist families of abelian varieties has been considered by Bhargava et. al. \cite{BKLS}. Our main result is stated below.  

\begin{theorem}\label{main thm}
   Let $p\geq 5$ be a prime number and $N$ be a natural number. Then, there exists a Galois representation $\rho: \op{G}_{\Q}\rightarrow \op{Aut}_{\Z_p}(T)\xrightarrow{\sim}\op{GL}_2(\Z_p)$ arising from a $p$-ordinary Hecke eigen-cuspform of weight $2$ and a choice of prime $\p|p$, such that 
   \[\dim_{\F_p}\op{Sel}_{\op{BK}}(A/\Q)[p]\geq N.\]
\end{theorem}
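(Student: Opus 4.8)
The plan is to deform a suitable residually reducible Galois representation. I would start by choosing the residual representation $\barrho: G_\Q \to \mathrm{GL}_2(\F_p)$ to be an extension
\[
\barrho \sim \begin{pmatrix} \bar\chi_1 & * \\ 0 & \bar\chi_2 \end{pmatrix},
\]
where $\bar\chi_1, \bar\chi_2$ are characters chosen so that $\bar\chi_1\bar\chi_2^{-1}$ is (a power of) the mod $p$ cyclotomic character, making $\barrho$ self-dual up to twist and compatible with an ordinary weight-$2$ shape at $p$. One should arrange the auxiliary ramification so that the relevant Bloch--Kato / Greenberg local conditions at $p$ and at the bad primes are the ones appearing in Theorem \ref{main thm}. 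This is the setup in which the Hamblen--Ramakrishna lifting method applies.

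Next I would invoke the extended Hamblen--Ramakrishna technique (as developed in the body of the paper): one produces a characteristic-zero lift $\rho: G_\Q \to \mathrm{GL}_2(\Z_p)$ of $\barrho$ that is ordinary at $p$, ramified at a controlled finite set of primes, and — crucially — modular, by a modularity/level-raising input (e.g., Skinner--Wiles type $R = \T$ for the residually reducible ordinary case, or the original Ramakrishna-style argument combined with known modularity lifting). The key refinement over the classical construction is quantitative: by allowing $N$ extra Taylor--Wiles-type auxiliary primes, each contributing an independent ramified class, one forces the Greenberg Selmer group of the deformation over the cyclotomic Iwasawa algebra $\Lambda$ to require at least $N$ generators, i.e. $\dim_{\F_p}\bigl(\mathrm{Sel}_{\mathrm{Gr}}/(\text{maximal ideal})\bigr)\geq N$. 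This is where almost all the work lies, and I expect the control of the global-to-local map (the Greenberg--Wiles formula bookkeeping for the tangent space of the relevant deformation functor, while keeping modularity) to be the main obstacle: one must simultaneously add enough primes to grow the Selmer rank and add enough primes to kill obstructions without disturbing modularity.

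Finally I would descend from the Iwasawa-theoretic statement to the Bloch--Kato statement over $\Q$. By Nakayama, a lower bound on the number of $\Lambda$-generators of the Greenberg Selmer group gives a lower bound on $\dim_{\F_p} \mathrm{Sel}_{\mathrm{Gr}}(A/\Q)[p]$ for the bottom layer. Then, comparing the Greenberg Selmer group with the Bloch--Kato Selmer group: in weight $2$ and the ordinary case these are known to agree up to finite, controlled error terms (local terms at $p$ and at bad primes, cf. the Flach/Rubin comparison), and — this is the last subtle point — one may need to pass to an isogenous lattice $T' \subset V$ to absorb the discrepancy, exactly as flagged in the abstract. Choosing the lattice appropriately (pushing the error into the part of the Selmer group that is insensitive to the lattice, or simply choosing $T$ so that the comparison map is injective on $p$-torsion) yields $\dim_{\F_p}\mathrm{Sel}_{\mathrm{BK}}(A/\Q)[p]\geq N$, as desired. $\qed$
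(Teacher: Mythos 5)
Your overall plan --- lift a reducible $\bar{\rho}$ via Hamblen--Ramakrishna with extra auxiliary primes, force the Greenberg Selmer group over $\Lambda$ to require many generators, then descend to Bloch--Kato over $\Q$ via an isogenous lattice --- matches the shape of the paper's argument. But the mechanism you suggest (``each contributing an independent ramified class'') is not how the growth is achieved, and the actual mechanism creates a tension that your plan does not resolve. In the paper the auxiliary primes are engineered (via Type III local deformation conditions and the congruence $\beta_{|\ell}=1$) so that the local Tamagawa factors $\delta_\ell(A):= \dim_{\F_p}(\ker h_{\ell,A})[p]$ are positive; a Cassels-theoretic bound (Proposition~\ref{g(A)>= sum delta l}) then gives $g(A)\geq\sum_\ell\delta_\ell(A)$. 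Those same $\delta_\ell(A)$ are precisely the error terms in
\[
0 \to \op{Sel}_{\op{BK}}(A/\Q)[p] \to \op{Sel}_{\op{Gr}}(A/\Q_\infty)^\Gamma[p] \to \bigoplus_{\ell\in S} (\ker h_{\ell,A})[p],
\]
so having made them large you cannot descend from $\op{Sel}_{\op{Gr}}$ to $\op{Sel}_{\op{BK}}$ for $A$ itself: your third paragraph's appeal to ``finite, controlled error terms'' directly contradicts your second paragraph.

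The passage to the isogenous lattice resolves exactly this, and it is more than ``absorbing a discrepancy'' or ``choosing $T$ so that the comparison map is injective on $p$-torsion'' --- injectivity of $\alpha$ already holds when $H^0(\Q,A)=0$; the cokernel is the issue. The Type II and III conditions are designed so that $\delta_\ell(A)>0$ while $\delta_\ell(A')=0$ at the same trivial primes (Lemmas~\ref{lemma type II local} and~\ref{lemma type III local}); Matsuno's structural inequality $g(A')\geq g(A)/2$ (Lemma~\ref{g(A')>=g(A)/2}, requiring $\mu(A)=0$ and the absence of finite $\Lambda$-submodules) transfers the generator count to $A'$; and since the $\delta_\ell(A')$ vanish at the auxiliary primes, the descent $\dim\op{Sel}_{\op{BK}}(A'/\Q)[p]\geq g(A')-\sum_\ell\delta_\ell(A')$ finally gives a useful bound. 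A second gap: you correctly flag the tension between growing the Selmer rank and killing obstructions, but you do not resolve it. The paper's Proposition~\ref{prop bounding dim of Selmer} shows one can add arbitrarily many Type III trivial primes without $\dim H^1_{\cN}(\op{G}_Y, \op{Ad}^0\bar{\rho})$ growing, so the dual-Selmer-killing step introduces only $O(1)$ further primes and the quantity $m$ in \eqref{dimension lower bound in terms of m and n} stays bounded independent of $n$; without an argument of this type the subtraction in your final inequality could swallow the gain.
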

In fact, we prove that any reducible Galois representation $\bar{\rho}:\op{G}_{\Q}\rightarrow \op{GL}_2(\F_p)$ with $p\geq 5$ and satisfying some additional conditions lifts to a modular representation $\rho$ for which \[\dim_{\F_p}\op{Sel}_{\op{BK}}(A/\Q)[p]\geq N.\] We refer to Theorem \ref{main thm aux} for further details.
\subsection{Methodology}
\par Matsuno's construction focuses on the relationship between the Selmer groups of a pair of elliptic curves, $E$ and $E'$, over $\mathbb{Q}$ that are related by a rational cyclic $p$-isogeny $\phi: E \rightarrow E'$. These Selmer groups are considered over the cyclotomic $\mathbb{Z}_p$-extension of $\mathbb{Q}$, where their algebraic properties are intertwined. Under specific additional local conditions, Matsuno demonstrates that the $p$-torsion in the Tate-Shafarevich group of $E'$ can become large. For Galois representations $A$ arising from modular forms with ordinary reduction at $p$, similar results can be proven. This is achieved by adapting the methods of Hamblen and Ramakrishna \cite{hamblenramakrishna}, who construct modular lifts of residually reducible Galois representations. The modularity of such representations follows from the results of Skinner and Wiles \cite{skinnerwiles}.

In greater detail, we start with a residual representation \[\bar{\rho}= \begin{pmatrix} \varphi & * \\ 0 & 1 \end{pmatrix}: \operatorname{Gal}(\overline{\mathbb{Q}}/\mathbb{Q}) \rightarrow \operatorname{GL}_2(\mathbb{F}_p),\] which is then lifted to a modular Galois representation

\[
\rho  : \operatorname{Gal}(\overline{\mathbb{Q}}/\mathbb{Q}) \rightarrow \operatorname{GL}_2(\mathbb{Z}_p)
\]
that satisfies certain additional local conditions. Let $T$ be a self-dual twist of the underlying $\mathbb{Z}_p$-module, and let $A := T \otimes_{\mathbb{Z}_p} \mathbb{Q}_p/\mathbb{Z}_p$ be the associated Galois module. Since $\bar{\rho}$ is reducible, there exists another Galois-stable lattice $T'$ containing $T$ as a submodule of index $p$. Defining $A' := T' \otimes_{\mathbb{Z}_p} \mathbb{Q}_p/\mathbb{Z}_p$, we obtain a rational $p$-cyclic isogeny $\phi: A \rightarrow A'$, i.e., $\phi$ is a $\op{G}_\Q$-equivariant surjective homomorphism whose kernel is isomorphic to $\Z/p\Z$.

Let $\Q_\infty$ be the cyclotomic $\Z_p$-extension of $\Q$, set $\Gamma:=\op{Gal}(\Q_\infty/\Q)$ and let $\op{Sel}_{\op{Gr}}(A/\Q_\infty)$ be the Greenberg Selmer group of $A$ over $\Q_\infty$. Let $S$ be the set of primes at which $\rho$ is ramified. There is an exact sequence
\[0\rightarrow \op{Sel}_{\op{BK}}(A/\Q)[p]\rightarrow \op{Sel}_{\op{Gr}}(A/\Q_\infty)^{\Gamma}[p]\rightarrow \prod_{\ell\in S} \op{ker}h_{\ell,A}[p]\] relating the Greenberg Selmer group of $A$ to the Bloch--Kato Selmer group of $A$ and the local Tamagawa numbers $\op{ker}h_{\ell,A}$. There is a similar sequence for $A'$. We show that if the middle term of the above exact sequence is large for $A$, then it is large for $A'$ as well. The local Tamagawa number conditions on $A$ ensure that the middle term is large for $A$. Hence the middle term for $A'$ is also large. Furthermore, the Tamagawa numbers of $A'$ are trivial for most $\ell\in S$. We deduce that $\dim \operatorname{Sel}_{\operatorname{BK}}(A'/\mathbb{Q})[p]$ is large. 

\subsection{Organization} Including the introduction, the article consists of five sections. Section \ref{s 2} is preliminary in nature. We discuss the Iwasawa theory of Bloch--Kato and Greenberg Selmer groups as well as their algebraic properties. We introduce the Iwasawa algebra and the Iwasawa invariants associated to the Greenberg Selmer group. In section \ref{s 3} we discuss lower bounds for the dimensions of the $p$-ranks of the Selmer groups. We study the specific case when there are two lattices $A$ and $A'$ which are related by a $p$-cyclic isogeny, and the relationship between their Bloch--Kato Selmer groups. In section \ref{s 4} we discuss the deformation theory of reducible Galois representations and discuss a variant of the strategy of Hamblen and Ramakrishna. In section \ref{s 5}, we prove Theorem \ref{main thm aux} and the main result, i.e., Theorem \ref{main thm}. 
\subsection{Acknowledgments} The authors thank S.~Hamblen and R.~Ramakrishna for helpful comments. The first author thanks T.~G\'omez and ICMAT, Madrid for hospitality in April 2025. The second author is grateful to TIFR, Mumbai for hosting him in February 2024.

\section{Iwasawa theory of Selmer groups}\label{s 2}
\par In this section, we discuss the Iwasawa theory of Selmer groups associated to a Galois stable lattice that arises from a Hecke eigencuspform. We begin by introducing the Bloch--Kato and Greenberg Selmer groups. Then we introduce Iwasawa theoretic invariants associated to Greenberg Selmer groups considered over the cyclotomic $\Z_p$-extension of $\Q$. These invariants encapsulate the algebraic structure of Selmer groups, viewed as modules over the Iwasawa algebra. For further details, we refer to the original works of Greenberg \cite{Greenbergpadic, Greenbergstructure} and Bloch--Kato \cite{BlochKato}. 

\subsection{The Bloch--Kato and Greenberg Selmer groups} Throughout, we fix an algebraic closure $\bar{\Q}$ of $\Q$ and set $\op{G}_{\Q}:=\op{Gal}(\bar{\Q}/\Q)$. Without further mention, any algebraic extension of $\Q$ considered in this article will be assumed to be contained in the chosen algebraic closure $\bar{\Q}$. Given a prime $\ell$, set $\op{G}_\ell:=\op{Gal}(\bar{\Q}_\ell/\Q_\ell)$, and choose an embedding $\iota_\ell: \bar{\Q}\hookrightarrow \bar{\Q}_\ell$. Note that $\iota_\ell$ induces an inclusion $\op{G}_\ell\hookrightarrow \op{G}_\Q$. Let $\tau$ be the variable in the complex upper half plane and set $q:=\op{exp}\left(2 \pi i \tau\right)$. Consider a normalized Hecke eigencuspform $h=\sum_{n\geq 1} a_n q^n$ on $\Gamma_1(N)$ of weight $k\geq 2$ and let $F_h:=\Q(\{a_n\mid n\geq 1\})$ be the number field generated by the Fourier coefficients of $h$. Fix an odd prime number $p$ and let $\p$ be a choice of prime of $F_h$ that lies above $p$. Let $\cK$ be the completion of $F_h$ at $\p$ and $\cO$ be the valuation ring of $\cK$. Let $\varpi$ be the uniformizer of $\cO$ and $\kappa:=\cO/(\varpi)$ be the residue field of $\cO$. Associated to $h$ is the Galois representation $\rho_{h, \p}: \op{G}_\Q\rightarrow \op{GL}_2(\cK)$, introduced by Eichler, Shimura and Deligne. Set $V_h\simeq \cK^2$ to be the underlying $\cK$-vector space. Let $\chi$ be the cyclotomic character and $\varepsilon$ be the nebentype character. Note that there is a perfect pairing 
\[V_h\times V_h\rightarrow \cK(\chi^{k-1}\varepsilon).\]
We shall assume that there is a character $\psi: \op{G}_{\Q}\rightarrow \cO^\times$ such that $\psi^2=\chi^{2-k}\varepsilon^{-1}$. Consider a self-dual twist $V:=V_h(\psi)$. Our constructions will depend on a choice of Galois stable $\cO$-lattice $T\subset V$. Setting $A:=V/T$, we find that $A$ is $p$-divisible and is isomorphic to $\left(\cK/ \cO\right)^2$. Let $\rho_V$ (resp. $\rho_T$) be the Galois representation associated to $V$ (resp. $T$)
\[\begin{split}
    & \rho_V: \op{G}_{\Q}\rightarrow \op{Aut}_{\cK}(V)\xrightarrow{\sim} \op{GL}_2(\cK);\\
    & \rho_T: \op{G}_{\Q}\rightarrow \op{Aut}_{\cO}(T)\xrightarrow{\sim} \op{GL}_2(\cO).\\
\end{split}\]
Let $\bar{T}$ be the mod $\p$ reduction of $T$ and $\bar{\rho}_T$ be the associated Galois representation
\[\bar{\rho}_{T}: \op{G}_{\Q}\rightarrow \op{Aut}_{\kappa}(\bar{T})\xrightarrow{\sim} \op{GL}_2(\kappa). \]
The representation $\bar{\rho}_T$ is referred to the \emph{residual representation} associated to $\rho_T$. Given any two Galois stable lattices $T$ and $T'$ in $V$, then $\bar{\rho}_T$ is not always isomorphic to $\bar{\rho}_{T'}$. However, if $\bar{\rho}_T$ is absolutely irreducible, then, there is a unique Galois stable lattice $T$ up to scaling. The case that shall interest us in this article is when $\bar{\rho}_T$ is reducible.
\par Setting $\iota_{\p}: F_h\rightarrow \cK$ to denote the natural inclusion, we assume that $h$ is \emph{$\p$-ordinary}, i.e., $\iota_\p(a_p)$ is a unit in $\cO$. Let $\op{I}_{p}$ be the inertia subgroup of $\op{G}_{p}$. There is a natural filtration 
\[0\rightarrow V_h^+\rightarrow V_h\rightarrow V_h^-\rightarrow 0\] of $\cK[\op{G}_{p}]$-modules, where $\op{dim}_{\cK}V_h^{\pm}=1$. Twisting by $\psi$, we obtain the exact sequence 
\[0\rightarrow V^+\xrightarrow{\iota} V\xrightarrow{\pi} V^-\rightarrow 0,\] where $V^{\pm}:=V_h^{\pm}(\psi)$. This induces a short exact sequence
\begin{equation}\label{ses T}0\rightarrow T^+\rightarrow T\rightarrow T^-\rightarrow 0\end{equation} of $\cO[\op{G}_p]$-modules. Here, $T^+:=T\cap V^+$ and $T^-:=\pi(T)$. As is well known, $T^+$ and $T^-$ are free $\cO$-modules of rank $1$. Setting $A^{\pm}:=V^{\pm}/T^{\pm}$, we find that \eqref{ses T} induces the following short exact sequence 
\[0\rightarrow A^+\rightarrow A\rightarrow A^-\rightarrow 0. \]
As $\cO$-modules, $A^+$ and $A^-$ are isomorphic to $\cK/\cO$.
\par Given a natural number $n$, let $\Q(\mu_{p^n})$ denote the cyclotomic extension of $\Q$ generated by the $p^n$-th roots of unity, set $\Q(\mu_{p^\infty}):=\bigcup_{n} \Q(\mu_{p^n})$. Let $\Q_\infty$ denote the unique $\Z_p$-extension of $\Q$ which is contained in $\Q(\mu_{p^\infty})$. We refer to $\Q_\infty$ as the \emph{cyclotomic $\Z_p$-extension} of $\Q$. Set $\Gamma:=\op{Gal}(\Q_\infty/\Q)$ and $\Gamma_n:=\Gamma/\Gamma^{p^n}$. Let $\Q_n/\Q$ be the extension contained in $\Q_\infty$ such that $[\Q_n:\Q]=p^n$, we find that $\Gamma_n=\op{Gal}(\Q_n/\Q)$. The Iwasawa algebra is the completed group algebra 
\[\Lambda:=\varprojlim_n \cO[\Gamma_n],\] where the inverse limit is taken with respect to natural quotient maps \[\pi_{m,n}:\cO[\Gamma_m]\rightarrow \cO[\Gamma_n]\] for $m\geq n$. Choose a topological generator $\gamma\in \Gamma$ and set $T:=\gamma-1\in \Lambda$. We identify $\Lambda$ with the formal power series ring $\cO\llbracket T\rrbracket$. 

\par In order to define the Bloch--Kato (resp. Greenberg) Selmer group associated to $A$ over $\Q_n$, we must first introduce local conditions. Given a prime $v$ of $\Q_n$, let $\op{I}_{n, v}$ denote the inertia subgroup of $\op{G}_{\Q_{n, v}}$. Let $\ell\neq p$ be a prime number and $v|\ell$ be a prime of $\Q_n$. For $\ast\in \{V, T, A\}$, define 
\[H^1_{\op{nr}}(\Q_{n, v}, \ast):=\op{ker}\left\{ H^1(\Q_{n, v}, \ast)\longrightarrow H^1(\op{I}_{n, v}, \ast)\right\}.\] The quotient map $V\rightarrow A$ induces a map of cohomology groups
\[H^1(\Q_{n, v}, V)\rightarrow H^1(\Q_{n, v}, A); \] we set 
\[\begin{split}
    & H^1_f(\Q_{n, v}, V):=H^1_{\op{nr}}(\Q_{n, v}, V), \\
    & H^1_f(\Q_{n, v}, A):=\op{Im}\left\{ H^1_{\op{nr}}(\Q_{n, v}, V)\rightarrow H^1(\Q_{n, v}, A)\right\}.
\end{split}\]
It is easy to see that $H^1_f(\Q_{n, v}, A)$ is contained in $H^1_{\op{nr}}(\Q_{n, v}, A)$. This follows from the commutativity of the square

\[
\begin{tikzcd}
H^1(\Q_{n, v}, V) \arrow{r}{} \arrow{d}{} & H^1(\op{I}_{n, v}, V) \arrow{d}{} \\
H^1(\Q_{n, v}, A)\arrow{r}{} & H^1(\op{I}_{n, v}, A).
\end{tikzcd}
\]
For a prime $\ell\neq p$, it follows from \cite[Lemma 1.3.5 (i)]{RubinES} that $H^1_f(\Q_\ell, A)$ is the divisible part of $H^1_{\op{nr}}(\Q_\ell, A)$. In particular, the index $[H^1_{\op{nr}}(\Q_\ell, A): H^1_f(\Q_\ell, A)]$ is finite.

\begin{definition}\label{defn of Tamagawa}At a prime $\ell\neq p$, the Tamagawa number of $A$ at $\ell$ is defined as follows
\[c_\ell(A):=[H^1_{\op{nr}}(\Q_\ell, A): H^1_f(\Q_\ell, A)] .\]
\end{definition}
On the other hand, if $v|p$ is a prime of $\Q_n$, define the local condition at $v$ as follows 
\[\begin{split}
    & H^1_f(\Q_{n, v}, V):=\op{ker}\left\{ H^1(\Q_{n, v}, V)\rightarrow H^1(\Q_{n, v}, V\otimes_{\Q_p} \mathbf{B}_{\op{cris}})\right\};\\
    & H^1_f(\Q_{n, v},A):=\op{im}\left\{ H^1_f(\Q_{n, v}, V)\rightarrow  H^1(\Q_{n, v}, A) \right\}.
\end{split}\]

\begin{definition}Define the Bloch--Kato Selmer group of $A$ over $\Q_n$ by
\[\op{Sel}_{\op{BK}}(A/\Q_n):=\op{ker}\left\{ H^1(\Q_n, A)\longrightarrow \prod_{v} \frac{H^1(\Q_{n, v}, A)}{H^1_f(\Q_{n, v}, A)}\right\},\]
where $v$ ranges over the primes of $\Q_n$. The Bloch--Kato Selmer group over $\Q_\infty$ is defined to be the following direct limit \[\op{Sel}_{\op{BK}}(A/\Q_\infty):=\varinjlim_n \op{Sel}_{\op{BK}}(A/\Q_n).\]
\end{definition}

\par We come to the notion of the Greenberg Selmer group. Given a prime $v$ of $\Q_n$ which lies above $p$, consider the composite
\[H^1(\Q_{n, v}, A)\rightarrow H^1(\op{I}_{n, v}, A)\rightarrow H^1(\op{I}_{n, v}, A^-)\] where the first map is restriction to $\op{I}_{n, v}$ and the second map is induced by the natural quotient $A\rightarrow A^-$. For a prime $v|p$ of $\Q_n$, the local Greenberg condition is defined as follows
\[H^1_{\op{ord}}(\Q_{n, v}, A):=\op{ker}\left\{ H^1(\Q_{n, v}, A)\rightarrow H^1(\op{I}_{n, v}, A^-)\right\}.\]

\begin{definition}
    With respect to notation above, define
    \[\op{Sel}_{\op{Gr}}(A/\Q_n):=\op{ker}\left\{ H^1(\Q_n, A)\longrightarrow \prod_{v\nmid p} \frac{H^1(\Q_{n, v}, A)}{H^1_{\op{nr}}(\Q_{n, v}, A)}\times \prod_{v|p} \frac{H^1(\Q_{n, v}, A)}{H^1_{\op{ord}}(\Q_{n, v}, A)}\right\}.\]
    The Greenberg Selmer group over $\Q_\infty$ is defined as the following direct limit
    \[\op{Sel}_{\op{Gr}}(A/\Q_\infty):=\varinjlim_n \op{Sel}_{\op{Gr}}(A/\Q_n).\]
\end{definition}
For ease of notation, let 
\[J_\ell^{\op{Gr}}(A/\Q_n):=\begin{cases}
    \bigoplus_{v|\ell} \frac{H^1(\Q_{n, v}, A)}{H^1_{\op{nr}}(\Q_{n, v}, A)} & \text{ if }\ell\neq p;\\
    \bigoplus_{v|\ell} \frac{H^1(\Q_{n, v}, A)}{H^1_{\op{ord}}(\Q_{n, v}, A)}& \text{ if }\ell=p;
\end{cases}\]
and set $J_\ell^{\op{Gr}}(A/\Q_\infty):=\varinjlim_n J_\ell^{\op{Gr}}(A/\Q_n)$ with respect to natural restriction maps. Let $S$ be the set of primes $\ell$ which divide $Np$. The Greenberg Selmer group over $\Q_\infty$ fits into the following exact sequence
\[0\rightarrow \op{Sel}_{\op{Gr}}(A/\Q_\infty)\rightarrow H^1(\Q_S/\Q_\infty, A)\rightarrow \bigoplus_{\ell\in S} J_\ell^{\op{Gr}}(A/\Q_\infty). \]
Likewise, we set 
\[J_\ell^{\op{BK}}(A/\Q_n):= \bigoplus_{v|\ell} \frac{H^1(\Q_{n, v}, A)}{H^1_{f}(\Q_{n, v}, A)} \]
and \[J_\ell^{\op{BK}}(A/\Q_\infty):=\varinjlim_n J_\ell^{\op{BK}}(A/\Q_n).\] Then the Bloch-Kato Selmer group fits into a natural short exact sequence 
\[0\rightarrow \op{Sel}_{\op{BK}}(A/\Q_\infty)\rightarrow H^1(\Q_S/\Q_\infty, A)\rightarrow \bigoplus_{\ell\in S} J_\ell^{\op{BK}}(A/\Q_\infty).\]
For all $n$, there is a natural inclusion
\[\iota_n: \op{Sel}_{\op{BK}}(A/\Q_n)\hookrightarrow \op{Sel}_{\op{Gr}}(A/\Q_n),\] see \cite[Proposition 4.1, part 1]{Ochiai}. In fact, for the prime $v$ of $\Q_n$ such that $v|p$, one has that $H^1_f (\Q_{n, v}, A)$ is a finite index subgroup of $H^1_{\op{ord}}(\Q_{n, v}, A)$, cf. \cite[Theorem 3]{Flach}.
Recall that for $v\nmid p$, $H^1_f (\Q_{n, v}, A)$ is the divisible part of $H^1_{\op{nr}} (\Q_{n, v}, A)$. Thus the Bloch--Kato conditions are contained in the Greenberg conditions. Passing to the direct limit, we find that there is an inclusion
\[\iota_\infty:\op{Sel}_{\op{BK}}(A/\Q_\infty)\hookrightarrow \op{Sel}_{\op{Gr}}(A/\Q_\infty).\]
Let $\alpha: \op{Sel}_{\op{BK}}(A/\Q)\rightarrow \op{Sel}_{\op{Gr}}(A/\Q_\infty)^\Gamma$ be the composite of maps 
\begin{equation}\label{defn of alpha}\op{Sel}_{\op{BK}}(A/\Q)\rightarrow \op{Sel}_{\op{Gr}}(A/\Q)\rightarrow \op{Sel}_{\op{Gr}}(A/\Q_\infty)^\Gamma.\end{equation}

\subsection{Iwasawa modules and invariants}\label{s 2.2}
\par We introduce the algebraic Iwasawa invariants associated to the Greenberg Selmer group $\op{Sel}_{\op{Gr}}(A/\Q_\infty)$. Given $\Lambda$-modules $M$ and $M'$ a \emph{pseudo-isomorphism} is a map of $\Lambda$-modules $\phi: M\rightarrow M'$ whose kernel and cokernel are both finite. A polynomial in $\cO\llbracket T\rrbracket$ is \emph{distinguished} if it is monic and its non-leading coefficients are in $(\varpi)$. Let $M$ be a finitely generated and torsion module over $\Lambda$. By the structure theory of such modules, there is a pseudo-isomorphism 
\begin{equation}\label{pseudo-iso}M\longrightarrow \left( \bigoplus_{i=1}^s \Lambda/(p^{\mu_i})\right)\oplus \left( \bigoplus_{j=1}^t \Lambda/(f_j(T)),\right)\end{equation}where $f_j(T)$ are distinguished polynomials. The $\mu$-invariant and $\lambda$-invariant of $M$ are defined as follows
\begin{equation}\label{structure decomposition}\mu(M):=\sum_{i=1}^s \mu_i\text{ and }\lambda(M):=\sum_{j=1}^t \op{deg}f_j(T). \end{equation} 
We set $\mu(M)=0$ (resp. $\lambda(M)=0$) if $s=0$ (resp. $t=0$). These quantities are well defined, i.e., independent of the choice of decomposition \eqref{pseudo-iso}. Let $g(M)$ be the minimal number of generators of $M$ as a $\Lambda$-module. It follows from Nakayama's lemma that 
\[g(M)=\dim_{\kappa} \left(\frac{M}{(\varpi, T)M}\right).\] Given a $\Lambda$-module $M$, let $M^\vee:=\op{Hom}_{\Z_p}(M, \Q_p/\Z_p)$ denote its Pontryagin dual. As is well known, it is a result of \cite{Katopadichodge} that $\op{Sel}_{\op{Gr}}(A/\Q_\infty)^\vee$ is torsion as a $\Lambda$-module. Denote by $\mu(A)$ (resp. $\lambda(A)$) the $\mu$-invariant (resp. $\lambda$-invariant) of $\op{Sel}_{\op{Gr}}(A/\Q_\infty)^\vee$. We mention here in passing that the Greenberg Selmer groups considered in this article are shown to be cotorsion over $\Lambda$, independent of Kato's result (see Remark \ref{remark on cotorsion}). Set $g(A)$ to denote the number of generators of $\op{Sel}_{\op{Gr}}(A/\Q_\infty)^\vee$. We note that 
\[g(A)=\dim_\kappa \left(\frac{\op{Sel}_{\op{Gr}}(A/\Q_\infty)^\vee}{(\varpi, T)\op{Sel}_{\op{Gr}}(A/\Q_\infty)^\vee}\right)=\dim_{\kappa} \left(\op{Sel}_{\op{Gr}}(A/\Q_\infty)^\Gamma[\varpi]\right).\]
\section{Selmer ranks: local and global theory}\label{s 3}
\par In this section, we study the relationship between Selmer ranks and Iwasawa theory. We begin by recalling a rather general version of Cassels' theorem. 
\subsection{Generalities on Selmer groups and Cassels' theorem}
\par The discussion in this subsection summarizes \cite[pp. 99-102]{GreenbergCetaro}. Let $p$ be an odd prime number and $F$ be a number field. Let $\Sigma$ be a finite set of primes of $F$ containing the primes of $F$ that lie above $p$. Let $F_\Sigma$ be the maximal extension of $F$ contained in $\bar{\Q}$ in which all primes $v\notin \Sigma$ are unramified. Let $M$ be a $\op{Gal}(F_\Sigma/F)$-module whose underlying group is isomorphic to $(\Q_p/\Z_p)^d$. For each prime $v\in \Sigma$, let $L_v$ be a divisible subgroup of $H^1(F_v, M)$ and consider the associated Selmer group 
\[S_M(F):=\op{ker}\left\{H^1(F_\Sigma/F, M)\xrightarrow{\Phi_M}\bigoplus_{v\in \Sigma} \frac{H^1(F_v, M)}{L_v}\right\}.\]
This is a discrete $p$-primary group which is cofinitely generated over $\Z_p$. Now let $T^*:=\op{Hom}_{\Z_p}(M, \mu_{p^\infty})$ which is a free $\Z_p$-module of rank $d$. For $v\in \Sigma$, let $U_v^*$ be the orthogonal complement of $L_v$ and consider the Selmer group 
\[S_{T^*}(F):=\op{ker}\left\{H^1(F_\Sigma/F, T^*)\xrightarrow{\Phi_{T^*}}\bigoplus_{v\in \Sigma} \frac{H^1(F_v, T^*)}{U_v^*}\right\}.\]
This is a finitely generated $\Z_p$-module. Let $V^*:=T^*\otimes_{\Z_p} \Q_p$ and $M^*:=V^*/T^*$. For $v\in \Sigma$ we find that $U_v^*$ generates a $\Q_p$-subspace of $H^1(F_v, V^*)$ under the map 
\[H^1(F_v, T^*)\rightarrow H^1(F_v, V^*).\] We let $L_v^*$ be the image of this subspace with respect to the map 
\[H^1(F_v, V^*)\rightarrow H^1(F_v, M^*).\] Let $S_{M^*}(F)$ be the associated Selmer group
\[S_{M^*}(F):=\op{ker}\left\{H^1(F_\Sigma/F, M^*)\xrightarrow{\Phi_{M^*}}\bigoplus_{v\in \Sigma} \frac{H^1(F_v, M^*)}{L_v^*}\right\}.\]
Assume that $H^0(F_v, M^*)$ is finite for at least one $v\in \Sigma$. Let $m^*$ be the $\Z_p$-corank of $S_{M^*}(F)$, then, the cokernel of $\Phi_M$ has corank $\leq m^*$. Moreover, we find that 
\begin{equation}\label{dim cok formula}\op{dim}_{\F_p} (\op{cok}\Phi_M)[p]\leq m^* +\op{dim}_{\F_p} H^0(F, M^*[p]). \end{equation}
 For further details, we refer to \cite[Proposition 4.13]{GreenbergCetaro}. 

\subsection{Comparing Bloch--Kato Selmer groups} Now we specialize our discussion to $M:=A$ and $F:=\Q$. Assume moreover that $\cO=\Z_p$ and thus $\cK=\Q_p$ and that the Selmer conditions on $A$ are the Bloch--Kato conditions $H^1_f$. Since $V$ is self-dual (i.e., $\op{Hom}(V, \Q_p(1))\simeq V$), we find that $M^*=A$. Since the Bloch-Kato Selmer conditions are also self-dual, we have that
\begin{equation}\label{self dual bloch kato selmer}S_{M}(\Q)=S_{M^*}(\Q)=\op{Sel}_{\op{BK}}(A/\Q),\end{equation} cf. \cite[p. 457]{LongoVigni}.
Recall that $S$ is the set of primes $\ell|N p$. Since $H^1_f(\Q_\ell, A)$ is contained in $H^1_{\op{nr}}(\Q_\ell, A)$ (resp. $H^1_{\op{ord}}(\Q_\ell, A)$) for $\ell\neq p$ (resp. $\ell=p$), we get a natural map 
\[J_\ell^{\op{BK}}(A/\Q)\rightarrow J_\ell^{\op{Gr}}(A/\Q).\] For every prime $\ell$, set 
\[h_{\ell,A}: J_\ell^{\op{BK}}(A/\Q)\longrightarrow J_\ell^{\op{Gr}}(A/\Q_\infty)^\Gamma,\] which is the composite of natural maps
\[J_\ell^{\op{BK}}(A/\Q)\rightarrow J_\ell^{\op{Gr}}(A/\Q)\rightarrow J_\ell^{\op{Gr}}(A/\Q_\infty)^\Gamma.\]
We set $\delta_\ell(A):=\op{dim}_{\F_p} (\op{ker}h_{\ell,A}) [p]$. Let 
\[h_A : \bigoplus_{\ell\in S} J_\ell^{\op{BK}}(A/\Q)\longrightarrow \bigoplus_{\ell\in S} J_\ell^{\op{Gr}}(A/\Q_\infty)^\Gamma \] be the direct sum $\bigoplus_{\ell\in S} h_{\ell,A}$. Then note that 
\[\op{dim}_{\F_p} (\op{ker}h_A)[p]=\sum_{\ell\in S} \op{dim}_{\F_p} (\op{ker}h_{\ell,A})[p]=\sum_{\ell\in S} \delta_\ell(A). \]
    The Bloch--Kato Selmer group $\op{Sel}_{\op{BK}}(A/\Q)$ and the Greenberg Selmer group $\op{Sel}_{\op{Gr}}(A/\Q_\infty)$ fit into a commutative diagram
\begin{equation}\label{BK exact diagram}
\begin{tikzcd}[column sep = small, row sep = large]
0\arrow{r} & \op{Sel}_{\op{BK}}(A/\Q) \arrow{r} \arrow{d}{\alpha} & H^1\left(\Q_S/\Q,A\right) \arrow{r}{\Phi} \arrow{d}{g} & \bigoplus_{\ell\in S} J_\ell^{\op{BK}}(A/\Q) \arrow{d}{h_A} &\\
0\arrow{r} & \op{Sel}_{\op{Gr}}(A/\Q_\infty)^{\Gamma} \arrow{r} & H^1\left(\Q_S/\Q_\infty,A\right)^{\Gamma} \arrow{r} &\left(\bigoplus_{\ell\in S} J_\ell^{\op{Gr}}(A/\Q_\infty)\right)^\Gamma,
\end{tikzcd}\end{equation}
    where the vertical map $\alpha$ is induced by $g$, cf. \eqref{defn of alpha}.
\begin{lemma}
    With respect to notation above, the following assertions hold.
    \begin{enumerate}
        \item For any prime $\ell\neq p$, we have that $\#\op{ker} h_{\ell,A}= c_\ell(A)$. 
        \item The map $h_{\ell,A}$ is injective for $\ell\notin S$.
    \end{enumerate}
\end{lemma}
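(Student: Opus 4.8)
The plan is to factor $h_{\ell,A}$ as the composite $h_{\ell,A}=p_2\circ p_1$ of the two maps used to define it, namely
\[
p_1\colon J_\ell^{\op{BK}}(A/\Q)=\frac{H^1(\Q_\ell,A)}{H^1_f(\Q_\ell,A)}\longrightarrow J_\ell^{\op{Gr}}(A/\Q)=\frac{H^1(\Q_\ell,A)}{H^1_{\op{nr}}(\Q_\ell,A)}
\]
and $p_2\colon J_\ell^{\op{Gr}}(A/\Q)\to J_\ell^{\op{Gr}}(A/\Q_\infty)^\Gamma$. Here $p_1$ is the natural surjection coming from the inclusion $H^1_f(\Q_\ell,A)\subseteq H^1_{\op{nr}}(\Q_\ell,A)$, so $\ker p_1=H^1_{\op{nr}}(\Q_\ell,A)/H^1_f(\Q_\ell,A)$, which by Definition \ref{defn of Tamagawa} has cardinality exactly $c_\ell(A)$. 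Thus for part (1) it suffices to show $p_2$ is injective for every $\ell\neq p$: granting this, $\ker h_{\ell,A}=p_1^{-1}(\ker p_2)=\ker p_1$, whence $\#\ker h_{\ell,A}=c_\ell(A)$.

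To prove $p_2$ injective I would exploit that the cyclotomic $\Z_p$-extension $\Q_\infty/\Q$ is unramified at every prime $\ell\neq p$. Fixing $n$ and a prime $v\mid\ell$ of $\Q_n$, the local extension $\Q_{n,v}/\Q_\ell$ is finite and unramified, so its inertia group $\op{I}_{n,v}$ equals $\op{I}_\ell$ and $\op{G}_{\Q_{n,v}}/\op{I}_\ell$ sits inside $\op{G}_\ell/\op{I}_\ell\cong\widehat{\Z}$ as a subgroup. Since $\widehat{\Z}$ has cohomological dimension $1$, inflation--restriction collapses to a short exact sequence
\[
0\to H^1(\op{G}_\ell/\op{I}_\ell,A^{\op{I}_\ell})\to H^1(\Q_\ell,A)\to H^1(\op{I}_\ell,A)^{\op{G}_\ell/\op{I}_\ell}\to 0,
\]
and the analogous one with $\Q_\ell$ replaced by $\Q_{n,v}$. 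These identify $J_\ell^{\op{Gr}}(A/\Q)$ with the subgroup $H^1(\op{I}_\ell,A)^{\op{G}_\ell/\op{I}_\ell}$ of $H^1(\op{I}_\ell,A)$ and the $v$-component of $J_\ell^{\op{Gr}}(A/\Q_n)$ with the (a priori larger) subgroup $H^1(\op{I}_\ell,A)^{\op{G}_{\Q_{n,v}}/\op{I}_\ell}$, and under these identifications the restriction map on the $v$-component is simply the inclusion of invariant subgroups, hence injective. Since each restriction map $J_\ell^{\op{Gr}}(A/\Q)\to J_\ell^{\op{Gr}}(A/\Q_n)$ is thereby injective, so is the structure map into the direct limit $J_\ell^{\op{Gr}}(A/\Q_\infty)$, whose image moreover lies in the $\Gamma$-invariants; this gives injectivity of $p_2$ and completes part (1).

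For part (2), since $p\in S$ the hypothesis $\ell\notin S$ forces $\ell\neq p$, so the injectivity of $p_2$ just obtained applies, and it remains only to check that $p_1$ is injective, i.e. that $c_\ell(A)=1$, for $\ell\notin S$. As $A$ is unramified at such $\ell$ we have $A^{\op{I}_\ell}=A$ and $H^1_{\op{nr}}(\Q_\ell,A)=H^1(\op{G}_\ell/\op{I}_\ell,A)\cong A/(\op{Frob}_\ell-1)A$, a quotient of the divisible group $A\cong(\cK/\cO)^2$ and hence divisible; it therefore coincides with its maximal divisible subgroup, which by \cite[Lemma 1.3.5 (i)]{RubinES} is $H^1_f(\Q_\ell,A)$. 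Thus $c_\ell(A)=1$, $p_1$ is an isomorphism, and $h_{\ell,A}=p_2\circ p_1$ is injective.

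The only genuine labor is the bookkeeping in the inflation--restriction step, and I do not anticipate a real obstacle there: everything rests on the single fact that for $\ell\neq p$ the local Galois groups $\op{G}_\ell/\op{I}_\ell$ and their open subgroups cut out by $\Q_\infty$ have cohomological dimension at most $1$, together with the description of the Bloch--Kato local condition as the maximal divisible subgroup of the unramified one already recorded in the text.
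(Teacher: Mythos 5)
Your proposal is correct and follows essentially the same route as the paper: factor $h_{\ell,A}$ through $J_\ell^{\op{Gr}}(A/\Q)$, note that the first map has kernel of size $c_\ell(A)$ by Definition~\ref{defn of Tamagawa}, and show the second map is injective because $\Q_\infty/\Q$ is unramified at $\ell\neq p$. The only differences are expository: where the paper asserts that injectivity of $\pi_\ell'$ is ``easy to see,'' you supply the inflation--restriction argument identifying $J_\ell^{\op{Gr}}(A/\Q)$ with $H^1(\op{I}_\ell,A)^{\op{G}_\ell/\op{I}_\ell}$ (and similarly over $\Q_n$), and for part (2) you prove $c_\ell(A)=1$ directly via divisibility of $H^1(\op{G}_\ell/\op{I}_\ell,A)$ rather than citing \cite[Lemma 1.3.5(iv)]{RubinES} — which is exactly the content of that reference.
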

\begin{proof}
\par We first consider the case when $\ell\neq p$. We analyze the kernels of the two maps $\pi_\ell: J_\ell^{\op{BK}}(A/\Q)\rightarrow J_\ell^{\op{Gr}}(A/\Q)$ and $\pi_\ell':J_\ell^{\op{Gr}}(A/\Q)\rightarrow J_\ell^{\op{Gr}}(A/\Q_\infty)$. The composite of $\pi_\ell$ and $\pi_\ell'$ is $h_{\ell, A}$. We compute $\# \op{ker}\pi_\ell$ and show that $\pi_\ell'$ is injective. The kernel of the map 
    \[\pi_\ell: \frac{H^1(\Q_\ell, A)}{H^1_f(\Q_\ell, A)}\rightarrow \frac{H^1(\Q_\ell, A)}{H^1_{\op{nr}}(\Q_\ell, A)}\] is the quotient
    \[\op{ker}\pi_\ell= \frac{H^1_{\op{nr}}(\Q_\ell, A)}{H^1_f(\Q_\ell, A)},\]
    and hence $\# \op{ker}\pi_\ell=c_\ell(A)$, see Definition \ref{defn of Tamagawa}.
    Since $\ell\neq p$, we find that $\Q_\infty/\Q$ is unramified at $\ell$. It is thus easy to see that the map \[\pi_\ell':\frac{H^1(\Q_\ell, A)}{H^1_{\op{nr}}(\Q_\ell, A)}\rightarrow \bigoplus_{v|\ell} \frac{H^1(\Q_{\infty, v}, A)}{H^1_{\op{nr}}(\Q_{\infty, v}, A)}\] is injective. Thus the kernel of the map $h_{\ell,A}$ coincides with the kernel of $\pi_\ell$ and we have shown that for $\ell\neq p$, then
    \[\# \op{ker}h_{\ell,A}=c_\ell(A).\] This completes the proof of (1)
    \par Next we prove part (2). For $\ell\notin S$ one finds that $c_\ell(A)=1$ by \cite[Lemma 1.3.5 (iv)]{RubinES}. Thus it follows from part (1) that $h_{\ell, A}$ is injective.
\end{proof}
For a prime $\ell\neq p$, we describe the quantity $c_\ell(A)$ in more detail. We denote by $\sigma_\ell$ a Frobenius element at $\ell$. 
\begin{lemma}\label{formula for the Tamagawa number}
    Let $\ell\neq p$ and set $\mathcal{A}:=A^{\op{I}_\ell}/(A^{\op{I}_\ell})_{\op{div}}$. Then we have that 
    \[\op{ker}h_{\ell,A}=\left(\mathcal{A}/(\sigma_\ell-1)\mathcal{A}\right)\text{ and }c_\ell(A)=\#\left(\mathcal{A}/(\sigma_\ell-1)\mathcal{A}\right).\]
\end{lemma}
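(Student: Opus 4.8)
The plan is to compute $H^1_{\op{nr}}(\Q_\ell,A)/H^1_f(\Q_\ell,A)$ explicitly in terms of the module $\mathcal{A}=A^{\op{I}_\ell}/(A^{\op{I}_\ell})_{\op{div}}$, using the inflation-restriction sequence and the fact that the cohomology of the procyclic group $\widehat{\Z}\simeq \op{G}_\ell/\op{I}_\ell$ is governed by the action of Frobenius $\sigma_\ell$. First I would note that, since $\op{I}_\ell$ acts trivially on a finite-index subgroup (the inertia-invariants) and $A$ is divisible, restriction to inertia fits into an exact sequence
\[
0\to H^1(\op{G}_\ell/\op{I}_\ell, A^{\op{I}_\ell})\to H^1(\Q_\ell, A)\to H^1(\op{I}_\ell, A)^{\op{G}_\ell/\op{I}_\ell},
\]
so that $H^1_{\op{nr}}(\Q_\ell,A)=H^1(\op{G}_\ell/\op{I}_\ell,A^{\op{I}_\ell})$. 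Since $\op{G}_\ell/\op{I}_\ell\simeq\widehat{\Z}$ is topologically generated by $\sigma_\ell$, one has $H^1(\widehat{\Z}, A^{\op{I}_\ell})\simeq A^{\op{I}_\ell}/(\sigma_\ell-1)A^{\op{I}_\ell}=(A^{\op{I}_\ell})_{\sigma_\ell}$, the coinvariants.

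Next I would identify $H^1_f(\Q_\ell,A)$, the divisible part of $H^1_{\op{nr}}(\Q_\ell,A)$, with the image of the divisible submodule $(A^{\op{I}_\ell})_{\op{div}}$ in the coinvariants, i.e.\ with $(A^{\op{I}_\ell})_{\op{div}}/(\sigma_\ell-1)(A^{\op{I}_\ell})_{\op{div}}$. Since $(A^{\op{I}_\ell})_{\op{div}}$ is a divisible module of finite corank, the operator $\sigma_\ell-1$ acting on it is surjective (it has finite cokernel, but a quotient of a divisible group is divisible, hence the cokernel is both finite and divisible, so trivial); therefore $(A^{\op{I}_\ell})_{\op{div}}$ contributes nothing to the cokernel $\mathcal{A}_{\sigma_\ell}$. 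Taking $\sigma_\ell$-coinvariants of the exact sequence $0\to (A^{\op{I}_\ell})_{\op{div}}\to A^{\op{I}_\ell}\to \mathcal{A}\to 0$ and using the vanishing just noted gives an isomorphism
\[
\frac{H^1_{\op{nr}}(\Q_\ell,A)}{H^1_f(\Q_\ell,A)}\;\xrightarrow{\;\sim\;}\;\mathcal{A}/(\sigma_\ell-1)\mathcal{A}.
\]
Combining this with part (1) of the previous lemma, which identifies $\op{ker}h_{\ell,A}$ with $H^1_{\op{nr}}(\Q_\ell,A)/H^1_f(\Q_\ell,A)$ and $c_\ell(A)$ with its cardinality, yields both claimed statements. (One should also record that $\mathcal{A}$ is finite, being a finitely generated $\Z_p$-module that is also torsion, so $\mathcal{A}/(\sigma_\ell-1)\mathcal{A}$ is finite and the Tamagawa number is well defined.)

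The step I expect to require the most care is the claim that $\sigma_\ell-1$ is surjective on $(A^{\op{I}_\ell})_{\op{div}}$ and that passing to coinvariants is exact enough to give the displayed isomorphism — in other words, checking that the natural map from the cokernel of $\sigma_\ell-1$ on $A^{\op{I}_\ell}$ to the cokernel on $\mathcal{A}$ is an isomorphism and not merely a surjection. This comes down to a small diagram chase with the snake lemma applied to multiplication by $\sigma_\ell-1$ on the short exact sequence above, together with the divisibility argument. Everything else is a routine application of inflation-restriction and the structure of continuous cohomology of $\widehat{\Z}$, which I would state with a reference to \cite[Lemma 1.3.5]{RubinES} rather than spell out.
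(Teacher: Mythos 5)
The paper's proof of this lemma is a one-line citation to \cite[Lemma 1.3.5(iii)]{RubinES}, so what you have done is to reprove Rubin's lemma rather than cite it; that is perfectly reasonable, and the overall strategy (inflation--restriction to reduce to $\widehat{\Z}$-cohomology, then the six-term exact sequence for $\sigma_\ell$-(co)invariants applied to $0\to (A^{\op{I}_\ell})_{\op{div}}\to A^{\op{I}_\ell}\to \mathcal{A}\to 0$) is the right one and is close to how Rubin proves it.

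There is, however, a genuine gap in the middle of the argument. You assert that $\sigma_\ell-1$ is surjective on $(A^{\op{I}_\ell})_{\op{div}}$, justified by the parenthetical ``it has finite cokernel, but a quotient of a divisible group is divisible, hence the cokernel is both finite and divisible, so trivial.'' The finiteness of the cokernel is not established anywhere and is in fact false in general: if $\sigma_\ell$ acts with eigenvalue $1$ on a nonzero summand of $V^{\op{I}_\ell}$ (equivalently, $H^0(\Q_\ell, V)\neq 0$), then $\sigma_\ell-1$ kills a copy of $\Q_p/\Z_p$ inside $(A^{\op{I}_\ell})_{\op{div}}$ and the cokernel is the whole of that infinite divisible group. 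The lemma as stated carries no hypothesis ruling this out (the finiteness of $H^0(\Q_\ell, A)$ is a hypothesis in later propositions, not here), so the step as written does not go through, and the ``so $(A^{\op{I}_\ell})_{\op{div}}$ contributes nothing'' conclusion is unsupported.

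Fortunately the surjectivity is not needed. From the coinvariants piece of the long exact sequence, the image $D$ of $\bigl((A^{\op{I}_\ell})_{\op{div}}\bigr)_{\sigma_\ell}$ in $(A^{\op{I}_\ell})_{\sigma_\ell}$ is a divisible subgroup whose quotient is $\mathcal{A}_{\sigma_\ell}$, which is finite because $\mathcal{A}$ is finite. Any divisible subgroup of $(A^{\op{I}_\ell})_{\sigma_\ell}$ maps to zero in the finite quotient $(A^{\op{I}_\ell})_{\sigma_\ell}/D$, so $D$ is the maximal divisible subgroup. By the paper's earlier invocation of \cite[Lemma 1.3.5(i)]{RubinES}, the maximal divisible subgroup of $H^1_{\op{nr}}(\Q_\ell,A)$ is exactly $H^1_f(\Q_\ell,A)$, so $D = H^1_f(\Q_\ell,A)$ and $H^1_{\op{nr}}/H^1_f \cong \mathcal{A}/(\sigma_\ell-1)\mathcal{A}$ as desired. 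Replacing the false surjectivity claim by this ``maximal divisible subgroup'' identification repairs the argument without changing its shape.
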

\begin{proof}
    The result above is \cite[Lemma 1.3.5 (iii)]{RubinES}.
\end{proof}

\begin{lemma}\label{explicit formula for tamagawa}
    With respect to the above notation, the following assertions hold.
    \begin{enumerate}
        \item If $\ell\neq p$, we have that $\delta_\ell(A)\leq 2$,
        \item $\delta_p(A)\leq 6$.
    \end{enumerate}
\end{lemma}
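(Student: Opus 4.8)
The plan is to bound $\delta_\ell(A) = \dim_{\F_p}(\ker h_{\ell,A})[p]$ in each case by exhibiting $\ker h_{\ell,A}$ as a subquotient of a Galois cohomology group whose $p$-torsion is visibly controlled by the rank $2$ of the underlying lattice. For part (1), with $\ell \neq p$, Lemma \ref{formula for the Tamagawa number} identifies $\ker h_{\ell,A}$ with $\mathcal{A}/(\sigma_\ell - 1)\mathcal{A}$, where $\mathcal{A} = A^{\op{I}_\ell}/(A^{\op{I}_\ell})_{\op{div}}$. First I would observe that $\mathcal{A}$ is a finite group which, being a quotient of a submodule of $A \simeq (\cK/\cO)^2$, can be generated by at most $2$ elements over $\cO$ (indeed $A^{\op{I}_\ell}$ has $\cO$-corank $\leq 2$, so its maximal quotient by the divisible part is cyclic or generated by two elements). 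Then $\mathcal{A}/(\sigma_\ell-1)\mathcal{A}$ is a quotient of $\mathcal{A}$, hence also generated by at most $2$ elements over $\cO$, and therefore $(\mathcal{A}/(\sigma_\ell-1)\mathcal{A})[p]$ has $\F_p$-dimension at most $2$. This gives $\delta_\ell(A) \leq 2$.

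For part (2), the prime above $p$, the relevant kernel sits inside $J_p^{\op{BK}}(A/\Q) = H^1(\Q_p, A)/H^1_f(\Q_p, A)$, and $h_{p,A}$ factors through $J_p^{\op{Gr}}(A/\Q) = H^1(\Q_p, A)/H^1_{\op{ord}}(\Q_p, A)$. The first step is that $\ker\bigl(J_p^{\op{BK}}(A/\Q) \to J_p^{\op{Gr}}(A/\Q)\bigr) = H^1_{\op{ord}}(\Q_p, A)/H^1_f(\Q_p, A)$, which is finite by the remark (following \cite[Theorem 3]{Flach}) that $H^1_f$ has finite index in $H^1_{\op{ord}}$; one then needs also to control the kernel of $J_p^{\op{Gr}}(A/\Q) \to J_p^{\op{Gr}}(A/\Q_\infty)^\Gamma$. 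The cleanest route is to use the definition $H^1_{\op{ord}}(\Q_p,A) = \ker\{H^1(\Q_p,A)\to H^1(\op{I}_p, A^-)\}$, so that $J_p^{\op{Gr}}(A/\Q)$ injects into $H^1(\op{I}_p, A^-)$, whose $\op{G}_p/\op{I}_p$-invariants are a subquotient controlled by $A^-$ (corank $1$) together with the contribution from $H^1_{\op{ord}}/H^1_f$ governed by $A^+$ and $A^-$. Adding up: the $H^1_{\op{ord}}/H^1_f$ piece contributes (via local Tate duality / the explicit description of $H^1_f$ at $p$) at most $2 + 2 = 4$ to the $p$-rank, coming from $A^+$ and $A^-$ each of corank $1$ (each contributing up to $2$), and the remaining comparison with $\Q_\infty$ contributes at most $2$ more from $H^0(\op{I}_p, A^-) \otimes \F_p$-type terms; one checks the total is bounded by $6$.

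A cleaner uniform phrasing, which I would actually adopt, is to invoke the Cassels-type estimate \eqref{dim cok formula} and the diagram \eqref{BK exact diagram}: each local term $J_\ell^{\op{BK}}(A/\Q)$ is built from at most two copies of $H^1$ of a corank-$1$ module (for $\ell = p$, using the filtration $A^+ \subset A$) or the unramified quotient (for $\ell \neq p$), and each such $H^1$ contributes at most its $\F_p$-corank plus a bounded error; tracking constants gives $2$ in the tame case and $6$ at $p$ (the factor accounting for $H^1$, $H^0$, and the two graded pieces $A^\pm$).

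\textbf{Main obstacle.} The genuinely delicate point is part (2): one must carefully package the comparison between $H^1_f(\Q_p, -)$, $H^1_{\op{ord}}(\Q_p, -)$, and the $\Gamma$-invariants of the corresponding objects over $\Q_{\infty,p}$, and extract a clean numerical bound from the ordinary filtration $0 \to A^+ \to A \to A^- \to 0$ without losing track of the (finite but nonzero) discrepancies measured by $H^1_f$ versus $H^1_{\op{ord}}$ and by local Euler characteristics at $p$. The tame case (1) is essentially immediate once Lemma \ref{formula for the Tamagawa number} is in hand.
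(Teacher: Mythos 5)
Your part (1) is essentially the paper's argument: once Lemma \ref{formula for the Tamagawa number} identifies $\ker h_{\ell,A}$ with the finite quotient $\mathcal{A}/(\sigma_\ell-1)\mathcal{A}$ of $A^{\op{I}_\ell}\subseteq A\simeq(\Q_p/\Z_p)^2$, one bounds its $p$-torsion by $2$ via the structure theory of cofinitely generated $\Z_p$-modules. That matches.

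Part (2) has a genuine gap, which you yourself flag in the ``main obstacle'' paragraph. Your plan is to decompose $\ker h_{p,A}$ into the piece $H^1_{\op{ord}}(\Q_p,A)/H^1_f(\Q_p,A)$ and the kernel of $J_p^{\op{Gr}}(A/\Q)\to J_p^{\op{Gr}}(A/\Q_\infty)^\Gamma$, and then to bound each contribution by examining the ordinary filtration $0\to A^+\to A\to A^-\to 0$. But you never actually estimate either piece; the claims that one contributes ``at most $2+2=4$'' and the other ``at most $2$ more'' are asserted, not derived, and the ``cleaner uniform phrasing'' is similarly not a proof. More to the point, this route is structurally much harder than needed: you would have to control, simultaneously, the failure of surjectivity and injectivity of $H^1(\Q_p,-)\to H^1(\Q_{\infty,p},-)^\Gamma$, the index of $H^1_f$ in $H^1_{\op{ord}}$ (which is finite by Flach but not obviously bounded by $4$ on $p$-torsion), and the $\op{I}_p$-invariants of $A^-$ --- none of which is set up in the surrounding text for a direct numerical count.

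The paper takes a much coarser and shorter route that makes the ordinarity structure and the passage to $\Q_\infty$ irrelevant. Since $\ker h_{p,A}$ lies inside $J_p^{\op{BK}}(A/\Q)=H^1(\Q_p,A)/H^1_f(\Q_p,A)$, and $H^1_f(\Q_p,A)$ is divisible, the surjection $H^1(\Q_p,A)[p]\twoheadrightarrow J_p^{\op{BK}}(A/\Q)[p]$ gives $\dim(\ker h_{p,A})[p]\le\dim H^1(\Q_p,A)[p]$. The Kummer sequence then bounds this by $\dim H^1(\Q_p,A[p])$, and the local Euler characteristic formula together with local Tate duality (using the self-duality $A^*=A$) gives
\[
\dim H^1(\Q_p,A[p]) = \dim H^0(\Q_p,A[p]) + \dim H^0(\Q_p,A^*[p]) + \dim A[p] \le 3\dim A[p]=6.
\]
This is the key simplification your proposal misses: rather than tracking the discrepancy between the Bloch--Kato and Greenberg local conditions and the descent from $\Q_\infty$, just throw away the local condition entirely and bound the whole local $H^1$.
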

\begin{proof}
    First consider the case when $\ell\neq p$. Note that $A\simeq (\Q_p/\Z_p)^2$ and it follows from Lemma \ref{formula for the Tamagawa number} that $\op{ker} h_{\ell, A}$ is a quotient of $A^{\op{I}_\ell}$. It thus follows that 
    \[\delta_\ell(A)=\dim_{\F_p}\left(\mathcal{A}/(\sigma_\ell-1)\mathcal{A}\right)[p]\leq 2. \]
    \par Next, we treat $\ell=p$. It is clear that $\dim (\op{ker} h_{p, A})[p]\leq \dim H^1(\Q_p, A)[p]$. From the Kummer sequence, we obtain the exact sequence
    \[0\rightarrow \frac{H^0(\Q_p, A)}{p H^0(\Q_p, A)} \rightarrow H^1(\Q_p, A[p])\rightarrow H^1(\Q_p, A)[p]\rightarrow 0.\] 
    From the Euler characteristic formula, we find that
    \[\dim H^1(\Q_p, A[p])=\dim H^0(\Q_p, A[p])+\dim H^2(\Q_p, A[p])+\dim A[p].\]
It follows from local duality that $\dim H^2(\Q_p, A[p])=\dim H^0(\Q_p, A^*[p])$. Here, $A^*$ is the Tate-dual of $A$. We note that $A=A^*$ and thus, we find that 
\[\dim H^1(\Q_p, A[p])\leq 3\dim A[p]=6.\qedhere\] 
\end{proof}
\begin{remark}
    If $\dim H^0(\Q_p, A[p])\leq 1$ as will be the case in our applications, then the bound in part (2) of the lemma can be improved to $\delta_p(A)\leq 4$.
\end{remark}
\begin{proposition}\label{g(A)>= sum delta l}
    Assume that $H^0(\Q,A)=0$ and that there is a prime $\ell$ for which $H^0(\Q_\ell, A)$ is finite. Then, with respect to notation above, we have that
    \[g(A)=\op{dim}_{\F_p}\left(\op{Sel}_{\op{Gr}}(A/\Q_\infty)^\Gamma[p]\right)\geq \sum_{\ell\in S} \delta_\ell(A).\]
\end{proposition}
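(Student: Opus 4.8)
The strategy is to chase the commutative diagram \eqref{BK exact diagram} and read off a lower bound on $g(A)$ from the size of the kernel of $h_A$, combined with the surjectivity (up to controlled cokernel) of the bottom row. The key observation is that $g(A) = \dim_{\kappa}\bigl(\op{Sel}_{\op{Gr}}(A/\Q_\infty)^\Gamma[\varpi]\bigr)$, established at the end of Section \ref{s 2.2}, so it suffices to produce a lower bound on the $p$-torsion of $\op{Sel}_{\op{Gr}}(A/\Q_\infty)^\Gamma$. The point is that the image of $\alpha$ is large precisely because $h_A$ kills a lot: the snake lemma (or a direct diagram chase) applied to \eqref{BK exact diagram} relates $\op{coker}(\alpha)$ and the map $h_A$ restricted to the image of $\Phi$.

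\textbf{Main steps.} First I would invoke the setup of the subsection ``Generalities on Selmer groups and Cassels' theorem'' with $M := A$, $F := \Q$, and $\Sigma := S$, using the Bloch--Kato local conditions $L_v := H^1_f(\Q_v, A)$, which are divisible. Self-duality of $V$ and of the Bloch--Kato conditions gives $M^* = A$ and $S_M(\Q) = S_{M^*}(\Q) = \op{Sel}_{\op{BK}}(A/\Q)$ by \eqref{self dual bloch kato selmer}. The hypothesis $H^0(\Q, A) = 0$ forces the $\Z_p$-corank $m^*$ of $S_{M^*}(\Q)$ to be $0$ — indeed a nonzero corank would produce, via the self-dual global Euler characteristic / Greenberg--Wiles formula, $H^0(\Q, A) \neq 0$, contradicting the hypothesis; alternatively $S_{M^*}(\Q) = \op{Sel}_{\op{BK}}(A/\Q)$ is finite because $A$ comes from a weight-$2$ ordinary form and the relevant $L$-value is nonzero, but the cleanest route is the corank computation. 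Granting $m^* = 0$ and the finiteness hypothesis $H^0(\Q_\ell, A)$ finite for some $\ell \in S$, inequality \eqref{dim cok formula} gives $\dim_{\F_p}(\op{coker}\,\Phi)[p] \leq m^* + \dim_{\F_p} H^0(\Q, A[p]) = 0$, since $H^0(\Q, A) = 0$ implies $H^0(\Q, A[p]) = 0$. Hence $\Phi$ is surjective.

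\textbf{Finishing the diagram chase.} Now in \eqref{BK exact diagram} the top row is exact with $\Phi$ surjective, so $H^1(\Q_S/\Q, A) \twoheadrightarrow \bigoplus_{\ell \in S} J_\ell^{\op{BK}}(A/\Q)$. Composing with $h_A$ and using commutativity of the right square, the image of the middle vertical map $g$ in $\bigl(\bigoplus_\ell J_\ell^{\op{Gr}}(A/\Q_\infty)\bigr)^\Gamma$ contains $\op{im}(h_A)$; equivalently, chasing an element of $\op{ker}(h_A)$ back through the surjection $\Phi$ and into $\op{Sel}_{\op{Gr}}(A/\Q_\infty)^\Gamma$ via the exactness of the bottom row, one gets a surjection (or at least a map with controlled kernel landing in the image of $\alpha$) from $\op{ker}(h_A)$ onto a subquotient of $\op{Sel}_{\op{Gr}}(A/\Q_\infty)^\Gamma / \op{im}(\alpha)$. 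Passing to $p$-torsion and using left-exactness, $\op{dim}_{\F_p}\op{Sel}_{\op{Gr}}(A/\Q_\infty)^\Gamma[p] \geq \op{dim}_{\F_p}(\op{ker}\,h_A)[p] = \sum_{\ell \in S}\delta_\ell(A)$, where the last equality is the displayed identity just before \eqref{BK exact diagram}. Combined with $g(A) = \op{dim}_{\F_p}\op{Sel}_{\op{Gr}}(A/\Q_\infty)^\Gamma[p]$ this yields the claim.

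\textbf{Main obstacle.} The delicate point is justifying that the map $\op{ker}(h_A) \to \op{coker}(\alpha)$ produced by the diagram chase is surjective — or more precisely, extracting a clean inequality on $p$-torsion dimensions without losing a factor. One must be careful that $\op{Sel}_{\op{Gr}}(A/\Q_\infty)^\Gamma[p]$ surjects onto $\bigl(\op{Sel}_{\op{Gr}}(A/\Q_\infty)^\Gamma/\op{im}\,\alpha\bigr)[p]$ only if $\op{im}(\alpha)$ is $p$-divisible, which need not hold; the safe move is instead to observe that $\op{ker}(h_A)$, being $p$-torsion (it consists of $p$-torsion elements of $J_\ell^{\op{BK}}(A/\Q)$ by the definition of $\delta_\ell$), maps into $\op{Sel}_{\op{Gr}}(A/\Q_\infty)^\Gamma[p]$ with kernel contained in the image of $\op{Sel}_{\op{BK}}(A/\Q) = S_{M^*}(\Q)$, which is \emph{finite} — hence after a careful count the kernel of $\op{ker}(h_A)[p] \to \op{Sel}_{\op{Gr}}(A/\Q_\infty)^\Gamma[p]$ is controlled by $\dim_{\F_p}\op{Sel}_{\op{BK}}(A/\Q)[p]$. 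Getting the bookkeeping exactly right so that the $\delta_\ell$ terms survive intact is the crux; I expect this to require the full strength of $H^0(\Q, A) = 0$ (forcing $\op{Sel}_{\op{BK}}(A/\Q)[p]$ to behave well) together with the surjectivity of $\Phi$ established above.
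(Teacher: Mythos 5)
Your argument has a genuine gap at the very step you lean on hardest: you claim that $H^0(\Q,A)=0$ forces the $\Z_p$-corank $m^*$ of $S_{M^*}(\Q)=\op{Sel}_{\op{BK}}(A/\Q)$ to vanish, citing ``the self-dual global Euler characteristic / Greenberg--Wiles formula.'' This implication is false. The Greenberg--Wiles formula relates Selmer and dual Selmer dimensions to local terms and $H^0$'s; it does not force the Selmer corank to be $0$ when $H^0(\Q,A)=0$. (Think of an elliptic curve with positive Mordell--Weil rank and no rational $p$-torsion: the Bloch--Kato Selmer group has positive corank while $H^0(\Q,E[p^\infty])=0$.) Your fallback that $\op{Sel}_{\op{BK}}(A/\Q)$ is ``finite because the relevant $L$-value is nonzero'' is also unjustified --- no such nonvanishing is assumed. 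Consequently you cannot conclude that $\Phi$ is surjective, and the clean diagram chase you want collapses.

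The paper does not need $m^*=0$. It keeps the corank $r(A)$ as a free parameter and arranges for it to cancel. Concretely: it splices \eqref{BK exact diagram} into two diagrams, shows the middle vertical $g$ is an isomorphism (via inflation--restriction and $H^0(\Q_\infty,A)=0$), and extracts two exact sequences, $0\to\op{Sel}_{\op{BK}}(A/\Q)\to\op{Sel}_{\op{Gr}}(A/\Q_\infty)^\Gamma\to\op{ker}h'\to 0$ and $0\to\op{ker}h'\to\op{ker}h_A\to\op{cok}\Phi$. Applying the snake lemma to the first and using the identity $\dim_{\F_p}\op{Sel}_{\op{BK}}(A/\Q)[p]=\dim_{\F_p}\bigl(\op{Sel}_{\op{BK}}(A/\Q)/p\bigr)+r(A)$ (obtained by writing $\op{Sel}_{\op{BK}}(A/\Q)\cong(\Q_p/\Z_p)^{r(A)}\oplus T$), together with the bound $\dim_{\F_p}(\op{cok}\Phi)[p]\leq r(A)$ from \eqref{dim cok formula}, one gets $\dim\op{Sel}_{\op{Gr}}(A/\Q_\infty)^\Gamma[p]\geq\dim(\op{ker}h')[p]+r(A)\geq\dim(\op{ker}h')[p]+\dim(\op{cok}\Phi)[p]\geq\dim(\op{ker}h_A)[p]=\sum_\ell\delta_\ell(A)$. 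The $r(A)$ appears twice with opposite effect and cancels. This is precisely the ``bookkeeping'' you flagged as the crux at the end of your proposal but did not carry out; your attempt to shortcut it by forcing $r(A)=0$ is where the argument breaks.

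Two smaller remarks: the hypothesis $H^0(\Q,A)=0$ is used in the paper both to make $g$ an isomorphism and to kill the $\dim_{\F_p}H^0(\Q,A[p])$ term in \eqref{dim cok formula}; you correctly use the latter but not the former. And your observation that $\op{ker}(h_A)$ consists of $p$-torsion elements is not needed --- the snake lemma on $p$-torsion handles everything once the exact sequences above are in place.
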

\begin{proof}
    The diagram \eqref{BK exact diagram} can be spliced into two commutative diagrams with exact rows:
\begin{equation}\label{diagram1}
\begin{tikzcd}[column sep = small, row sep = large]
0\arrow{r} & \op{Sel}_{\op{BK}}(A/\Q) \arrow{r} \arrow{d}{\alpha} & H^1\left(\Q_S/\Q,A\right) \arrow{r} \arrow{d}{g} & \op{im} \Phi \arrow{r} \arrow{d}{h'} & 0\\
0\arrow{r} & \op{Sel}_{\op{Gr}}(A/\Q_\infty)^{\Gamma} \arrow{r} & H^1\left(\Q_S/\Q_\infty,A\right)^{\Gamma} \arrow{r} &\left(\bigoplus_{\ell\in S} J_\ell^{\op{Gr}}(A/\Q_\infty)\right)^\Gamma
\end{tikzcd}\end{equation}
and 
\begin{equation}\label{diagram2}
\begin{tikzcd}[column sep = small, row sep = large]
0\arrow{r} & \op{im} \Phi \arrow{r} \arrow{d}{h'} & \bigoplus_{\ell\in S} \frac{H^1(\Q_\ell, A)}{H^1_f(\Q_\ell, A)}\arrow{r} \arrow{d}{h_A} & \op{cok}\Phi \arrow{r} 
 \arrow{d}{j} & 0\\
0\arrow{r} & \left(\bigoplus_{\ell\in S} J_\ell^{\op{Gr}}(A/\Q_{\infty})\right)^\Gamma \arrow{r} & \left(\bigoplus_{\ell\in S} J_\ell^{\op{Gr}}(A/\Q_{\infty})\right)^\Gamma \arrow{r} & 0,
\end{tikzcd}\end{equation}
where $h'$ is the restriction of $h_A$ to the image of $\Phi$. From the inflation-restriction sequence, we have
\[H^1(\Q_\infty/\Q, H^0(\Q_\infty, A))\rightarrow H^1(\Q_S/\Q, A)\xrightarrow{g} H^1(\Q_S/\Q_\infty, A)^\Gamma \rightarrow H^2(\Q_\infty/\Q, H^0(\Q_\infty, A)).\]
Note that $H^0(\Q, A)=0$ and since $\Q_\infty/\Q$ is pro-$p$, it follows that $H^0(\Q_\infty, A)=0$ (see \cite[Corollary 1.6.13]{NSW}). Thus $g$ is an isomorphism. Thus from \eqref{diagram1}, the map $\alpha$ is injective with cokernel isomorphic to the kernel of $h'$. We thus have a short exact sequence:
    \begin{equation}\label{alpha exact sequence}0\rightarrow \op{Sel}_{\op{BK}}(A/\Q)\xrightarrow{\alpha} \op{Sel}_{\op{Gr}}(A/\Q_\infty)^\Gamma \rightarrow \op{ker}h'\rightarrow 0.\end{equation} On the other hand, from \eqref{diagram2}, we obtain the left exact sequence:
    \begin{equation}\label{h' and h exact sequence}0\rightarrow \op{ker} h' \rightarrow \op{ker} h_A \rightarrow \op{cok}\Phi.\end{equation}
Write \[\op{Sel}_{\op{BK}}(A/\Q)=(\Q_p/\Z_p)^{r(A)} \oplus T,\] where $r(A)$ is the $\Z_p$-corank of the Bloch-Kato Selmer group and $T$ is a finite abelian $p$-group. We find that 
     \begin{equation}\label{Sel BK mod p and r(A)}
     \begin{split}\dim_{\F_p} \op{Sel}_{\op{BK}}(A/\Q)[p] 
     = & \dim_{\F_p}T[p]+ r(A)
     =  \dim_{\F_p} (T/p T)+r(A), \\
     = & \dim_{\F_p} \left(\frac{\op{Sel}_{\op{BK}}(A/\Q)}{p \op{Sel}_{\op{BK}}(A/\Q)}\right)+ r(A).
\end{split}\end{equation} By assumption, there is a prime $\ell$ such that $H^0(\Q_\ell, A)$ is finite. By \eqref{dim cok formula} and \eqref{self dual bloch kato selmer}, we have that \begin{equation}\label{cok Phi[p]}\dim_{\F_p}(\op{cok} \Phi) [p]\leq r(A),\end{equation}
since $H^0(\Q, A)=0$.
Applying the snake lemma to \eqref{alpha exact sequence}, we arrive at an exact sequence
     \begin{equation}\label{exact sequence 4 term}0\rightarrow \op{Sel}_{\op{BK}}(A/\Q)[p]\rightarrow \op{Sel}_{\op{Gr}}(A/\Q_\infty)^\Gamma [p]\rightarrow \op{ker}h'[p]\rightarrow \frac{\op{Sel}_{\op{BK}}(A/\Q)}{p \op{Sel}_{\op{BK}}(A/\Q)}.\end{equation}
    From \eqref{h' and h exact sequence}, \eqref{Sel BK mod p and r(A)}, \eqref{cok Phi[p]}, \eqref{exact sequence 4 term} we arrive at the following:
     \begin{align*}
& \dim_{\F_p}\op{Sel}_{\op{Gr}}(A/\Q_\infty)^\Gamma[p] \\
& \geq \dim_{\F_p} \op{Sel}_{\op{BK}}(A/\Q)[p] + \dim_{\F_p} \left(\ker h'\right)[p] - \dim_{\F_p} \left(\frac{\op{Sel}_{\op{BK}}(A/\Q)}{p\op{Sel}_{\op{BK}}(A/\Q)}\right)\\
& = \dim_{\F_p} (\ker h')[p]+r(A)\\
& \geq \dim_{\F_p} (\ker h')[p]+\dim_{\F_p}(\op{cok}\Phi)[p]\\
& \geq \dim_{\F_p} (\ker h_A)[p]\\
& = \sum_{\ell\in S} \op{dim}_{\F_p}( \op{ker}h_{\ell,A})[p]=\sum_{\ell\in S} \delta_\ell(A). \qedhere
\end{align*}
\end{proof}

\par Now suppose that $\bar{\rho}_T$ is reducible and of the form $\mtx{\varphi\bar{\psi}}{\ast}{0}{\bar{\psi}}$ with respect to some basis $e_1$, $e_2$ of $T$. Then there is the lattice $T'$ generated by $e_1':=p^{-1} e_1$ and  $e_2':=e_2$ which is also Galois stable and $\bar{\rho}_{T'}$ is of the form $\mtx{\varphi\bar{\psi}}{0}{\ast}{\bar{\psi}}$. The inclusion map $\phi: T\rightarrow T'$ has cokernel isomorphic to $\F_p(\varphi\bar{\psi})$. Let $A$ (resp. $A'$) denote the $p$-divisible group associated to $T$ (resp. $T'$), i.e., $A:=V/T$ (resp. $A':=V/T'$). Then, $\phi: A\rightarrow A'$ has kernel isomorphic to $A_1:=\F_p(\varphi\bar{\psi})$. Denote by $\phi^\vee: A'\rightarrow A$ the \emph{dual isogeny}, induced by the composite 
\[\phi^\vee: T'\hookrightarrow p^{-1} T\xrightarrow{\sim} T.\] 
We now show that under some conditions on the characters $\varphi$ and $\bar{\psi}$ the $\mu$-invariant of the Greenberg Selmer group $\op{Sel}_{\op{Gr}}(A/\Q_\infty)$ is $0$.
\begin{lemma}\label{basic lemma on mu}
    Let $N$ be a cofinitely generated over the Iwasawa algebra $\Lambda=\Z_p\llbracket T\rrbracket$. Recall that $N^\vee$ denotes the Pontryagin dual of $N$. Then the following are equivalent:
    \begin{enumerate}
        \item $N$ is cotorsion over $\Lambda$ and $\mu(N^\vee)=0$,
        \item $N$ is cofinitely generated as a $\Z_p$-module,
        \item $N[p]$ is finite.
    \end{enumerate}
\end{lemma}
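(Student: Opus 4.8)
Set $M := N^\vee$, so that $M$ is a finitely generated $\Lambda$-module with $\Lambda=\Z_p\llbracket T\rrbracket$. The plan is to translate all three conditions through Pontryagin duality and then run a cyclic chain of implications. Duality turns (1) into: $M$ is $\Lambda$-torsion with $\mu(M)=0$; it turns (2) into: $M$ is finitely generated over $\Z_p$; and, using the standard identification $(N[p])^\vee\cong N^\vee/pN^\vee$, it turns (3) into: $M/pM$ is finite. I would then prove $(1)\Rightarrow(2)\Rightarrow(3)\Rightarrow(1)$ in these dual forms, the only real input being the structure theory of finitely generated $\Lambda$-modules recalled around \eqref{pseudo-iso}.

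For $(1)\Rightarrow(2)$: when $M$ is $\Lambda$-torsion with $\mu(M)=\sum\mu_i=0$, the decomposition \eqref{pseudo-iso} provides a pseudo-isomorphism $\phi\colon M\to E$ with $E=\bigoplus_{j}\Lambda/(f_j(T))$ and $f_j$ distinguished, with no $\Lambda/(p^{\mu_i})$ summands. By Weierstrass division each $\Lambda/(f_j(T))$ is free over $\Z_p$ of rank $\deg f_j$ (basis $1,T,\dots,T^{\deg f_j-1}$), so $E$ is finitely generated over $\Z_p$. Since $\ker\phi$ is finite and $M/\ker\phi=\op{im}\phi\subseteq E$ is a submodule of a finitely generated module over the Noetherian ring $\Z_p$, the module $M$ is an extension of a finitely generated $\Z_p$-module by a finite one, hence finitely generated over $\Z_p$. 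The implication $(2)\Rightarrow(3)$ is immediate, since $M/pM$ is then a finite-dimensional $\F_p$-vector space.

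The substantive direction is $(3)\Rightarrow(1)$. Apply the structure theorem in its general form: there is a pseudo-isomorphism $\phi\colon M\to E$ with $E=\Lambda^r\oplus\bigoplus_i\Lambda/(p^{\mu_i})\oplus\bigoplus_j\Lambda/(f_j(T)^{b_j})$, the $f_j$ distinguished irreducible, and with $\ker\phi$ and $\coker\phi$ finite. Applying $-\otimes_{\Z_p}\F_p$ to $0\to\ker\phi\to M\to\op{im}\phi\to 0$ shows $\op{im}\phi/p\op{im}\phi$ is a quotient of $M/pM$, hence finite; and the exact sequence $\op{Tor}_1^{\Z_p}(\coker\phi,\F_p)\to\op{im}\phi/p\op{im}\phi\to E/pE\to\coker\phi/p\coker\phi\to 0$ then forces $E/pE$ to be finite. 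But $E/pE$ contains $(\Lambda/p\Lambda)^r$ and, since each $\mu_i\geq 1$, a copy of $\Lambda/p\Lambda\cong\F_p\llbracket T\rrbracket$ for each $i$, whereas $\Lambda/(f_j(T)^{b_j},p)\cong\F_p\llbracket T\rrbracket/(T^{b_j\deg f_j})$ is finite. As $\F_p\llbracket T\rrbracket$ is infinite, finiteness of $E/pE$ forces $r=0$ and the absence of every $\Lambda/(p^{\mu_i})$ summand, so $M$ is pseudo-isomorphic to $\bigoplus_j\Lambda/(f_j(T)^{b_j})$. This target is $\Lambda$-torsion with vanishing $\mu$-invariant; since $\ker\phi$ is finite (hence torsion), $M$ is itself $\Lambda$-torsion, and $\mu(M)=0$ by pseudo-isomorphism invariance of $\mu$. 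The one point needing care — and the only place a naive argument collapses — is that the pseudo-isomorphism need not be injective, which is why one must reduce $\op{im}\phi$ rather than $M$ modulo $p$ and keep track of the finite groups $\ker\phi$ and $\coker\phi$; the rest is bookkeeping.
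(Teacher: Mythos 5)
Your proof is correct, and it uses the same essential ingredient as the paper (the structure theorem for finitely generated $\Lambda$-modules, then reduction mod $p$), but it is organized differently. The paper dispatches $(2)\Leftrightarrow(3)$ as ``clear'' and then proves $(1)\Leftrightarrow(3)$ by replacing $M$ outright with its elementary form and computing $M/pM$ there, relying on the (front-loaded, slightly implicit) invariance of the relevant finiteness conditions under pseudo-isomorphism. You instead run the cyclic chain $(1)\Rightarrow(2)\Rightarrow(3)\Rightarrow(1)$ and, rather than replacing $M$, keep $\ker\phi$ and $\coker\phi$ explicitly in play, passing through $\operatorname{im}\phi/p\operatorname{im}\phi$ and the $\operatorname{Tor}$ sequence to get finiteness of $E/pE$. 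This buys you two small things: you never have to argue $(3)\Rightarrow(2)$ directly (which in the paper's phrasing implicitly rests on a topological Nakayama argument for compact $\Lambda$-modules --- your route gets it for free via $(3)\Rightarrow(1)\Rightarrow(2)$), and you make fully explicit the point the paper glosses, namely that the decomposition \eqref{pseudo-iso} holds only up to finite kernel and cokernel, so one must be careful which module is reduced modulo $p$. The paper's version is shorter; yours is more self-contained at the cost of the extra exact-sequence bookkeeping. Both are valid.
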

\begin{proof}
    That (2) and (3) are equivalent is clear. We show that (1) and (3) are equivalent. We denote by $M$ the Pontryagin dual of $N$. Note that if $M$ is pseudo-isomorphic to $M'$, then $M$ is finitely generated as a $\Z_p$-module if and only if $M'$ is finitely generated as a $\Z_p$-module. Thus by replacing $M$ by a pseudo-isomorphic module if necessary, we have an isomorphism of $\Lambda$-modules
    \[M\xrightarrow{\sim}\Lambda^r\oplus \left( \bigoplus_{i=1}^s \Lambda/(p^{\mu_i})\right)\oplus \left( \bigoplus_{j=1}^t \Lambda/(f_j(T))\right),\]
    where $f_j(T)$ are distinguished polynomials. Note that $r=0$ if and only if $M$ is torsion over $\Lambda$. We find that 
    \[\begin{split} M/pM & \xrightarrow{\sim} \left(\Lambda/(p)\right)^{s+r}\oplus \left( \bigoplus_{j=1}^t \Lambda/(p, f_j(T))\right)\\
    & \xrightarrow{\sim} \F_p\llbracket T\rrbracket^{s+r}\oplus \left( \bigoplus_{j=1}^t \F_p[T]/(T^{\op{deg}(f_j)})\right).
    \end{split}\]
    Therefore $M/pM$ is finite if and only if $s+r=0$. Note that $s=0$ if and only if $\mu(M)=0$. On the other hand, $N[p]$ is dual to $M/pM$. Thus we have shown that $N[p]$ is finite if and only if $M$ is torsion as a $\Lambda$-module and $\mu(M)=0$, which completes the proof.
\end{proof}

\begin{proposition}\label{mu=0 propn}
    Assume that both characters $\varphi$ and $\bar{\psi}$ are odd. Moreover assume that $\varphi$ is ramified at $p$ and $\bar{\psi}$ is unramified at $p$. Then we have that $\mu(A)=0$.
\end{proposition}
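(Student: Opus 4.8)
The plan is to reduce the statement, via Lemma~\ref{basic lemma on mu}, to a finiteness assertion about a residual Selmer group, and then to extract that finiteness from the theorem of Ferrero and Washington applied to the two Jordan--H\"older constituents $\F_p(\varphi\bar\psi)$ and $\F_p(\bar\psi)$ of $\bar\rho_T$. Since $\op{Sel}_{\op{Gr}}(A/\Q_\infty)$ is cofinitely generated over $\Lambda=\Z_p\llbracket T\rrbracket$ and cotorsion (Remark~\ref{remark on cotorsion}, or \cite{Katopadichodge}), Lemma~\ref{basic lemma on mu} tells us that $\mu(A)=0$ is equivalent to the finiteness of $\op{Sel}_{\op{Gr}}(A/\Q_\infty)[p]$. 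I would first record that $H^0(\Q_\infty,A)=0$: writing $\bar A:=A[p]=\bar T$, the diagonal characters of $\bar\rho_T=\mtx{\varphi\bar\psi}{\ast}{0}{\bar\psi}$ are both nontrivial --- $\bar\psi$ because it is odd, and $\varphi\bar\psi$ because it is ramified at $p$ (as $\varphi$ is ramified at $p$ while $\bar\psi$ is not) --- so $\bar A^{\op{G}_\Q}=0$, and since $\Q_\infty/\Q$ is pro-$p$ a Nakayama argument (\cite[Cor.~1.6.13]{NSW}) gives $A^{\op{G}_{\Q_\infty}}=0$. The sequence $0\to\bar A\to A\xrightarrow{p}A\to 0$ then furnishes an isomorphism $H^1(\Q_S/\Q_\infty,\bar A)\xrightarrow{\ \sim\ }H^1(\Q_S/\Q_\infty,A)[p]$.

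Next I would transport $\op{Sel}_{\op{Gr}}(A/\Q_\infty)[p]$ through this isomorphism and show it lands inside a finite enlargement of the residual Greenberg Selmer group
\[
\op{Sel}(\bar A/\Q_\infty):=\ker\Big\{H^1(\Q_S/\Q_\infty,\bar A)\longrightarrow\bigoplus_{v\nmid p}\frac{H^1(\Q_{\infty,v},\bar A)}{H^1_{\op{nr}}(\Q_{\infty,v},\bar A)}\ \times\ \bigoplus_{v\mid p}H^1(\op{I}_{\infty,v},\bar A^-)\Big\}.
\]
Here I would use that, $h$ being $p$-ordinary of weight $2$ and $\bar\psi$ unramified at $p$, the module $\bar A^-:=A^-[p]$ is unramified at every $v\mid p$ and is isomorphic to $\F_p(\bar\psi)$ as a $\op{G}_p$-module; and that, $\varphi$ being ramified at $p$, $\bar\rho_T|_{\op{G}_p}$ is not unramified and hence has a unique unramified quotient line, so the local ordinary filtration $0\to\bar A^+\to\bar A\to\bar A^-\to 0$ coincides with the restriction to $\op{G}_p$ of the global filtration $0\to\F_p(\varphi\bar\psi)\to\bar A\to\F_p(\bar\psi)\to 0$. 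Comparing the Greenberg conditions for $A$ with the corresponding ones for $\bar A$ place by place --- noting that $(A^-)^{\op{I}_{\infty,v}}/p$ and $A^{\op{I}_{\infty,v}}/p$ are finite --- shows the discrepancy is finite; since finiteness of a Selmer group is insensitive to finitely many finite modifications of the local conditions, it then suffices to prove that $\op{Sel}(\bar A/\Q_\infty)$ is finite.

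To do so I would feed the global filtration into cohomology. For $c\in\op{Sel}(\bar A/\Q_\infty)$, its image $\bar c\in H^1(\Q_S/\Q_\infty,\F_p(\bar\psi))$ is unramified everywhere --- away from $p$ by the unramified condition, and at $v\mid p$ because the ordinary condition forces the restriction of $\bar c$ to $\op{I}_{\infty,v}$ to vanish (using $\bar A^-=\F_p(\bar\psi)$) --- so $\bar c$ lies in an isotypic component of $\op{Hom}(X_\infty,\F_p)$, where $X_\infty$ is the unramified $p$-class group Iwasawa module of the abelian field $\Q_\infty(\bar\psi)$; by Ferrero--Washington $\mu(X_\infty)=0$, so $X_\infty$ is finitely generated over $\Z_p$ and this group is finite. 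Given $\bar c$, the classes $c$ lying over it differ by elements of (the image of) the subgroup of $H^1(\Q_S/\Q_\infty,\F_p(\varphi\bar\psi))$ consisting of classes unramified outside $p$ --- the ordinary condition at $p$ imposes nothing on the $\F_p(\varphi\bar\psi)$-part, which dies in $\bar A^-$ --- i.e.\ of an isotypic component of $\op{Hom}(\mathfrak X_\infty,\F_p)$, with $\mathfrak X_\infty=\op{Gal}(M_\infty/\Q_\infty(\varphi\bar\psi))$ for $M_\infty$ the maximal abelian pro-$p$ extension of $\Q_\infty(\varphi\bar\psi)$ unramified outside $p$. Now the parity hypotheses become essential: self-duality of $V$ gives $\det\bar\rho_T=\bar\chi$, so $\varphi\bar\psi=\bar\chi\bar\psi^{-1}$ is \emph{even} (as $\bar\psi$ is odd), whence the relevant component of $\mathfrak X_\infty$ is $\Lambda$-torsion and Ferrero--Washington (together with the Iwasawa main conjecture for abelian fields) once more gives vanishing $\mu$ and hence finiteness mod $p$. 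Combining the two bounds shows $\op{Sel}(\bar A/\Q_\infty)$ is finite, and we are done.

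I expect the main obstacle to be the bookkeeping in the second step: verifying that replacing $A$ by $\bar A$ and comparing the Greenberg local conditions produce only finite discrepancies --- where $p$-ordinarity in weight $2$ and the unramifiedness of $\bar\psi$ at $p$ are what keep $\bar A^-$ unramified at $p$ --- and that the local ordinary and global filtrations agree at $p$. The decisive external input is Ferrero--Washington; it is precisely the oddness of $\bar\psi$ that forces $\varphi\bar\psi$ to be even, which is what makes the $p$-ramified Iwasawa module attached to it $\Lambda$-torsion so that the argument closes.
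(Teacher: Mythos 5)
Your proposal follows the same overall route as the paper: reduce via Lemma~\ref{basic lemma on mu} to finiteness of $\op{Sel}_{\op{Gr}}(A/\Q_\infty)[p]$, show $H^0(\Q_\infty,A)=0$ to identify this with a subgroup of $H^1(\Q_S/\Q_\infty,A[p])$, split along the filtration $0\to\F_p(\varphi\bar\psi)\to A[p]\to\F_p(\bar\psi)\to 0$, and handle the two pieces by Ferrero--Washington. The deviations are worth noting. For the $\F_p(\varphi\bar\psi)$-piece the paper simply invokes \cite[Lemma~5.9]{GreenbergCetaro}, which already says that $H^1(\Q_S/\Q_\infty,\F_p(\phi))$ is \emph{finite} outright whenever $\phi$ is an even character; this renders your tracking of only the unramified-outside-$p$ classes, and your passage through $\mathfrak{X}_\infty$ and the Iwasawa main conjecture, unnecessary. (In fact the main conjecture is not needed at all: Iwasawa's torsion theorem for the even component together with Ferrero--Washington is exactly what Greenberg's lemma packages.) Your route also has a small gap in the ``classes lying over $\bar c$'' step: if $c-c'=a(\beta)$ is unramified at $v\nmid p$ this does not immediately force $\beta$ to be unramified at $v$, since $H^1(\op{I}_{\infty,v},A_1)\to H^1(\op{I}_{\infty,v},A[p])$ need not be injective; one has to note that the failure is bounded by a finite local $H^0$ term. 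The paper avoids this by never restricting to unramified classes on the $A_1$-side. Finally, you introduce an auxiliary ``residual Greenberg Selmer group'' and a finite-discrepancy comparison with $\op{Sel}_{\op{Gr}}(A/\Q_\infty)[p]$; the paper simply observes directly that every element of $\op{Sel}_{\op{Gr}}(A/\Q_\infty)[p]$, viewed in $H^1(\Q_S/\Q_\infty,A[p])$, is unramified away from $p$ and has unramified image in $H^1(\Q_S/\Q_\infty,\F_p(\bar\psi))$, which is all that is used. Your derivation of the evenness of $\varphi\bar\psi$ via self-duality is fine but unnecessary --- it is immediate from both $\varphi$ and $\bar\psi$ being odd, as the paper notes.
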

\begin{proof}
    The result is proven for elliptic curves with good ordinary reduction at an odd prime $p$ \cite[Proposition 5.10]{GreenbergCetaro}. We generalize this argument to our setting. It follows from Lemma \ref{basic lemma on mu} that $\mu(A)=0$ if and only if $\op{Sel}_{\op{Gr}}(A/\Q_\infty)[p]$ is finite. Set $\varphi_1:=\varphi\bar{\psi}$ and $\varphi_2:=\bar{\psi}$. The module $A[p]$ sits in a short exact sequence 
    \[0\rightarrow A_1 \rightarrow A[p]\rightarrow A_2\rightarrow 0, \]
    where $A_i:=\F_p(\varphi_i)$. From the above, we have the exact sequence
    \begin{equation}\label{a,b ses}H^1(\Q_S/\Q_\infty, A_1)\xrightarrow{a} H^1(\Q_S/\Q_\infty, A[p])\xrightarrow{b}H^1(\Q_S/\Q_\infty, A_2).\end{equation}We note that since $\varphi\bar{\psi}$ is ramified at $p$, $H^0(\Q, A_1)=0$. On the other hand, since $\bar{\psi}$ is odd, $H^0(\Q, A_2)=0$. Since $\Q_\infty/\Q$ is a pro-$p$ extension, it follows from \cite[Corollary 1.6.13]{NSW} that $H^0(\Q_\infty, A_i)=0$ for $i=1, 2$. It follows therefore that $H^0(\Q_\infty, A[p])=0$. Consider the Kummer sequence 
    \[0\rightarrow A[p]\rightarrow A\xrightarrow{\times p} A\rightarrow 0\]
    and the associated long exact sequence in cohomology:
    \[\dots \rightarrow H^0(\Q_\infty, A)\rightarrow H^1(\Q_S/\Q_\infty, A[p])\rightarrow H^1(\Q_S/\Q_\infty, A)\xrightarrow{\times p} H^1(\Q_S/\Q_\infty, A).\]
    Since $H^0(\Q_\infty, A[p])=0$ it follows that $H^0(\Q_\infty, A)=0$. As a result, we have an isomorphism:
    \[H^1(\Q_S/\Q_\infty, A[p])\xrightarrow{\sim} H^1(\Q_S/\Q_\infty, A)[p].\]
    Thus we can view $\op{Sel}_{\op{Gr}}(A/\Q_\infty)[p]$ as a subgroup of $H^1(\Q_S/\Q_\infty, A[p])$. If $\op{Sel}_{\op{Gr}}(A/\Q_\infty)[p]$ is infinite then it follows that either:
    \begin{itemize}
        \item $\mathscr{A}:=\op{Image}(a)\cap \op{Sel}_{\op{Gr}}(A/\Q_\infty)[p]$ is infinite, or, 
        \item $\mathscr{B}:=b\left( \op{Sel}_{\op{Gr}}(A/\Q_\infty)[p]\right)$ is infinite.
    \end{itemize}
   Thus in order to complete the proof it suffices to show that both $\mathscr{A}$ and $\mathscr{B}$ are finite.
   \par Since the character $\varphi_1$ is even, it follows from \cite[Lemma 5.9]{GreenbergCetaro} that $H^1(\Q_S/\Q_\infty, A_1)$ is finite and we deduce that $\mathscr{A}$ is finite. We show that $\mathscr{B}$ is finite. Since $\varphi$ is ramified and $\bar{\psi}$ is unramified, $A^-[p]$ is the unique $1$-dimensional unramified quotient of $A[p]$. Thus we may identify $A_2$ with $A^-[p]$. From our identifications, the Greenberg Selmer group $\op{Sel}_{\op{Gr}}(A/\Q_\infty)[p]$ consists of $\alpha\in H^1(\Q_S/\Q_\infty, A[p])$ such that 
    \begin{itemize}
        \item $\alpha$ is unramified at all primes $w\nmid p$ of $\Q_\infty$, 
        \item $b(\alpha)$ is unramified at all primes of $\Q_\infty$.
    \end{itemize}
    Let $H^1_{\op{nr}}(\Q_S/\Q_\infty, \F_p(\bar{\psi}))$ consist of classes in $H^1(\Q_S/\Q_\infty, \F_p(\bar{\psi}))$ which are unramified at all primes of $\Q_\infty$. Since $b(\alpha)$ belongs to $H^1_{\op{nr}}(\Q_S/\Q_\infty, \F_p(\bar{\psi}))$, it suffices to show that $H^1_{\op{nr}}(\Q_S/\Q_\infty, \F_p(\bar{\psi}))$ is finite. Let $K:=\Q(\bar{\psi})$ be the abelian extension of $\Q$ which is cut out by $\bar{\psi}$ and $K_\infty$ the cyclotomic $\Z_p$-extension of $K$. Consider the inflation-restriction sequence:
    \[0\rightarrow H^1\left(\op{Gal}(K_\infty/\Q_\infty),H^0(K_\infty, \F_p(\bar{\psi}))\right)\rightarrow H^1(\Q_S/\Q_\infty, \F_p(\bar{\psi}))\rightarrow H^1(\Q_S/K_\infty, \F_p(\bar{\psi})). \]
    Note that $H^1\left(\op{Gal}(K_\infty/\Q_\infty),H^0(K_\infty, \F_p(\bar{\psi}))\right)$ vanishes as $\op{Gal}(K_\infty/\Q_\infty)$ has order dividing $(p-1)$. Thus in order to show that $H^1_{\op{nr}}(\Q_S/\Q_\infty, \F_p(\bar{\psi}))$ is finite, it suffices to show that $H^1_{\op{nr}}(\Q_S/K_\infty, \F_p(\bar{\psi}))$ is finite. The action of $\op{Gal}(\Q_S/K)$ on $\F_p(\bar{\psi})$ is trivial. Thus, a non-zero element $\beta\in H^1_{\op{nr}}(\Q_S/K_\infty, \F_p(\bar{\psi}))$ gives a surjective homomorphism $\beta:\op{Gal}(\Q_S/K_\infty)\rightarrow \Z/p\Z$ which is unramified at all primes of $K_\infty$. Let $\mathcal{L}/K_\infty$ be the maximal $p$-elementary abelian extension of $K_\infty$ in which all primes of $K_\infty$ are unramified. The homomorphism $\beta$ factors as a surjective map $\beta: \op{Gal}(\mathcal{L}/K_\infty)\rightarrow \Z/p\Z$. Let $\widetilde{\mathcal{L}}$ be the maximal abelian unramified pro-$p$ extension of $K_\infty$, and set $X:=\op{Gal}(\widetilde{\mathcal{L}}/K_\infty)$. A well-known theorem of Ferrero and Washington \cite{FerreroWash} asserts that the Iwasawa $\mu$-invariant of the $\Z_p$-extension $K_\infty/K$ vanishes, i.e., $\mu(X)=0$. Consequently by Lemma \ref{basic lemma on mu}, it follows that $X/p X$ is finite. Identify $X/pX$ with $\op{Gal}(\mathcal{L}/K_\infty)$ to deduce that it is finite. Hence, the number of homomorphisms $\beta: \op{Gal}(\mathcal{L}/K_\infty)\rightarrow \Z/p\Z$ is also finite. Thus we have shown that $\mathscr{B}$ is finite and the proof is complete.
\end{proof}

\begin{remark}\label{remark on cotorsion}
    We observe that the preceding proof and the implication (3) $\implies$ (1) of Lemma \ref{basic lemma on mu} actually establishes that $\operatorname{Sel}_{\operatorname{Gr}}(A/\mathbb{Q}_\infty)$ is cotorsion as a $\Lambda$-module. Moreover, since $A'$ is isogenous to $A$, the isogeny $\phi$ induces a natural map  
\[
\operatorname{Sel}_{\operatorname{Gr}}(A/\mathbb{Q}_\infty) \to \operatorname{Sel}_{\operatorname{Gr}}(A'/\mathbb{Q}_\infty)
\]  
whose kernel and cokernel are cotorsion as $\Lambda$-modules. It follows that $\operatorname{Sel}_{\operatorname{Gr}}(A'/\mathbb{Q}_\infty)$ is also cotorsion as a $\Lambda$-module. Consequently, the results below remain valid independently of Kato’s theorem on torsion-ness of Selmer groups referenced in Section \ref{s 2.2}.
\end{remark}

\begin{proposition}\label{g(X) >= g(Y)/2}
    Let $X$ be a finitely generated and torsion $\Lambda$-module with no finite $\Lambda$-submodule. Let $Y$ be a $\Lambda$-submodule of $X$ such that $Z:=X/Y$ is of exponent $p$. Then, we have that $g(X)\geq g(Y)/2$.
\end{proposition}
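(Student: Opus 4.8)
The plan is to exploit the hypothesis that $Z=X/Y$ has exponent $p$. This forces $pX\subseteq Y$, so $Y$ is sandwiched between the $\Lambda$-quotient $pX$ of $X$ and the $\Lambda$-submodule $Y/pX$ of $X/pX$, and the strategy is to show that neither of these needs more generators than $X$ itself, so that $g(Y)\le 2g(X)$.

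First I would carry out the usual d\'evissage. For any $\Lambda$-module $M$ one has $g(M)=\dim_{\F_p}(M\otimes_\Lambda\F_p)$ with $\F_p=\Lambda/(p,T)$, and $-\otimes_\Lambda\F_p$ is right exact; applying this to the short exact sequence $0\to pX\to Y\to Y/pX\to 0$ gives $g(Y)\le g(pX)+g(Y/pX)$. Since multiplication by $p$ is a $\Lambda$-linear surjection $X\twoheadrightarrow pX$, we get $g(pX)\le g(X)$. It then remains to bound $g(Y/pX)$ by $g(X)$. Here I would observe that $Y/pX$ and $X/pX$ are killed by $p$, hence are finitely generated modules over the discrete valuation ring $\overline{\Lambda}:=\Lambda/p\Lambda\cong\F_p\llbracket T\rrbracket$; that for a $p$-torsion module the number of $\Lambda$-generators equals the number of $\overline{\Lambda}$-generators (both equal $\dim_{\F_p}M/TM$); and that $g(X/pX)=g(X)$ because $X/(p,T)X=(X/pX)/T(X/pX)$. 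So the problem reduces to the ring-theoretic statement: for $M$ a finitely generated module over a DVR and $N\subseteq M$ a submodule, $g(N)\le g(M)$. Granting this and applying it to $Y/pX\subseteq X/pX$,
\[g(Y)\le g(pX)+g(Y/pX)\le g(X)+g(X)=2g(X),\]
which is the claim.

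The generator count over a DVR is the only real content, and I would get it from the elementary-divisor decomposition $M\cong\overline{\Lambda}^{\,r}\oplus\bigoplus_i\overline{\Lambda}/(T^{e_i})$, which yields $g(M)=\operatorname{rank}_{\overline{\Lambda}}M+\dim_{\F_p}M[T]$; both summands are monotone under passing to a submodule, the first by localizing at the zero ideal and the second because $N[T]\subseteq M[T]$. Thus the main (and only mildly delicate) point is this DVR computation; the remaining steps are formal homological algebra. I should remark that the inequality as stated does not actually use that $X$ is $\Lambda$-torsion or has no finite $\Lambda$-submodule, although these hold in the setting where the proposition is applied; if one prefers to stay in Iwasawa-theoretic language, the hypotheses guarantee that the pseudo-isomorphism of \eqref{pseudo-iso} is injective, $X\hookrightarrow E_X$, and one may run the same squeeze argument with $E_X$ in place of $X$, but this detour is unnecessary.
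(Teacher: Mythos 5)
Your proof is correct, and it is a self-contained argument where the paper simply cites \cite[Lemma~5.3]{matsuno} without reproducing the proof. The key d\'evissage is sound: since $Z$ has exponent $p$ one has $pX\subseteq Y$, and the right-exactness of $-\otimes_\Lambda\F_p$ applied to $0\to pX\to Y\to Y/pX\to 0$ gives $g(Y)\le g(pX)+g(Y/pX)$; the surjection $X\twoheadrightarrow pX$ bounds the first term by $g(X)$, and the monotonicity of the minimal-generator count under submodules over the DVR $\Lambda/p\Lambda\cong\F_p\llbracket T\rrbracket$ (via $g=\operatorname{rank}+\dim(\cdot)[T]$) bounds $g(Y/pX)$ by $g(X/pX)=g(X)$. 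You are also right that the argument nowhere invokes that $X$ is $\Lambda$-torsion or has no finite $\Lambda$-submodule, so your version is in fact slightly more general than the stated proposition; those hypotheses are present because that is how the result is packaged in Matsuno's lemma (whose proof is tailored to the structure theory of torsion $\Lambda$-modules without finite submodules), whereas your route replaces that structure-theoretic input with the elementary divisor decomposition over a DVR plus formal homological algebra. The only point worth stating explicitly, which you gesture at but do not spell out, is that $Y/pX$ is a finitely generated $\overline\Lambda$-module because $\overline\Lambda$ is Noetherian and $X/pX$ is finitely generated; with that noted, the elementary-divisor argument applies cleanly.
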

\begin{proof}
    For a proof of this result, we refer to \cite[Lemma 5.3]{matsuno}.
\end{proof}

Given a $\Lambda$-module $M$, recall that $M^\vee:=\op{Hom}_{\Z_p}\left(M, \Q_p/\Z_p\right)$. 

\begin{proposition}\label{propn no nontriv}
    Assume that $H^0(\Q, A)=0$. Then $\op{Sel}_{\op{Gr}}(A/\Q_\infty)^\vee$ has no non-trivial finite $\Lambda$-submodules. 
\end{proposition}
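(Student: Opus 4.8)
The plan is to use the standard criterion for a cofinitely generated cotorsion Iwasawa module to have no proper finite submodule: this follows from a cohomological argument controlling the behaviour of the Selmer group under specialization at the $\Gamma_n$-levels. Concretely, write $X := \op{Sel}_{\op{Gr}}(A/\Q_\infty)^\vee$. By the general principle (see Greenberg \cite{Greenbergstructure}, or Hachimori--Matsuno, or \cite[\S 4]{GreenbergCetaro}), $X$ has no non-trivial finite $\Lambda$-submodule provided that the natural maps on Selmer groups at finite levels are sufficiently surjective, equivalently provided that $H^2$ of the relevant global Galois cohomology vanishes and that the local conditions are ``divisible enough.'' The key input is that $H^0(\Q_\infty, A) = 0$, which we get from the hypothesis $H^0(\Q, A) = 0$ together with the fact that $\Q_\infty/\Q$ is a pro-$p$ extension, via \cite[Corollary 1.6.13]{NSW}, exactly as in the proof of Proposition \ref{mu=0 propn}.

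First I would recall that $\op{Sel}_{\op{Gr}}(A/\Q_\infty)$ sits in the defining exact sequence
\[
0\rightarrow \op{Sel}_{\op{Gr}}(A/\Q_\infty)\rightarrow H^1(\Q_S/\Q_\infty, A)\rightarrow \bigoplus_{\ell\in S} J_\ell^{\op{Gr}}(A/\Q_\infty),
\]
and that, since $H^0(\Q_\infty, A) = 0$, the module $H^1(\Q_S/\Q_\infty, A)$ is $p$-divisible (it is the inflation limit, or directly from the Kummer sequence argument used in the proof of Proposition \ref{mu=0 propn}, $H^1(\Q_S/\Q_\infty, A[p]) \xrightarrow{\sim} H^1(\Q_S/\Q_\infty, A)[p]$, and $H^2(\Q_S/\Q_\infty, A)$ vanishes by weak Leopoldt). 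Then I would invoke the Pontryagin-dual reformulation: a finite $\Lambda$-submodule of $X$ is the same as a finite $\Gamma$-cocovariant quotient that is ``extra,'' and one shows directly that $\varinjlim_n \op{Sel}_{\op{Gr}}(A/\Q_n)$ has no such quotient because the corestriction maps $\op{Sel}_{\op{Gr}}(A/\Q_n) \to \op{Sel}_{\op{Gr}}(A/\Q_\infty)^{\Gamma_n}$ have bounded cokernel controlled by $H^1(\Gamma_n, H^0(\Q_\infty, A)) = 0$ and a surjectivity statement on local conditions. The cleanest route is to follow \cite[Proposition 4.14 or 4.15]{GreenbergCetaro} (the analogue for abelian varieties with ordinary reduction), checking that the only hypotheses needed there are $H^0(\Q_\infty, A) = 0$ and the divisibility of the local Greenberg conditions $H^1_{\op{ord}}(\Q_{\infty,v}, A)$, both of which hold in our setting.

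The main obstacle I anticipate is verifying the local surjectivity/divisibility inputs at the prime $v \mid p$: one needs that $H^1_{\op{ord}}(\Q_{\infty,v}, A)$ is divisible, or at least that the relevant cokernel of $H^1_{\op{ord}}(\Q_{n,v}, A) \to H^1_{\op{ord}}(\Q_{\infty,v}, A)^{\Gamma_{n,v}}$ is bounded independently of $n$; this ultimately rests on $H^0(\Q_{\infty,v}, A^-) $ being finite (or zero), which holds because $A^-$ is an unramified-twisted copy of $\cK/\cO(\bar\psi)$ and $\Q_{\infty,v}$ has finite residue extensions, so the relevant $\Gamma$-invariants are controlled. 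Away from $p$ the analogous statement is routine since $\Q_\infty/\Q$ is unramified at $\ell \neq p$ and $H^1_{\op{nr}}$ behaves well. Once these local facts are in place, the no-finite-submodule conclusion follows formally from the snake-lemma/control-theory argument, and I would present it by citing the appropriate statement in \cite{GreenbergCetaro} or \cite{Greenbergstructure} and indicating that the hypotheses are met, rather than reproving control theory from scratch.
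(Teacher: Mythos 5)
Your approach matches the paper's: both reduce the claim to the argument of \cite[Proposition 4.14]{GreenbergCetaro}, using $H^0(\Q_\infty, A) = 0$ (which follows from $H^0(\Q, A) = 0$ and the pro-$p$-ness of $\Q_\infty/\Q$) together with divisibility of the local Greenberg conditions. The local verification you flag as the main anticipated obstacle is exactly what the paper discharges by citing \cite[Proposition 5.15]{LongoVigni} for the no-finite-submodule property of $H^1(\Q_S/\Q_\infty, A)^\vee$, observing that the crystalline hypothesis there can be dropped since the underlying local Lemmas 5.11--5.13 use only ordinarity.
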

\begin{proof}
   It follows from \cite[Proposition 5.15]{LongoVigni} that $H^1(\Q_S/\Q_\infty, A)^\vee$ has no non-trivial finite $\Lambda$-submodules. We note that this result applies in our setting, and does not require $A$ to be crystalline at $p$. In greater detail, the proof of \cite[Proposition 5.15]{LongoVigni} relies on \cite[Lemma 5.11, Lemma 5.12, Lemma 5.13]{LongoVigni}. These results are based purely on local computations and require only the ordinary hypothesis. It then follows verbatim from the proof of \cite[Proposition 4.14]{GreenbergCetaro} that $\op{Sel}_{\op{Gr}}(A/\Q_\infty)^\vee$ has no non-trivial finite $\Lambda$-submodules.
\end{proof}

\begin{lemma}\label{g(A')>=g(A)/2}
    Assume that $H^0(\Q, A)=0$, $H^0(\Q, A')=0$ and that $\mu(A)=0$. With respect to notation above, $g(A')\geq g(A)/2$.
\end{lemma}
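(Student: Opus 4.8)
The plan is to exploit the $p$-cyclic isogeny $\phi\colon A\to A'$ together with its dual $\phi^\vee\colon A'\to A$, and then to invoke Proposition \ref{g(X) >= g(Y)/2}. Write $X:=\op{Sel}_{\op{Gr}}(A/\Q_\infty)^\vee$ and $X':=\op{Sel}_{\op{Gr}}(A'/\Q_\infty)^\vee$; by Remark \ref{remark on cotorsion} these are finitely generated torsion $\Lambda$-modules, and by Proposition \ref{propn no nontriv}, applied with the hypotheses $H^0(\Q,A)=0$ and $H^0(\Q,A')=0$, neither has a non-trivial finite $\Lambda$-submodule. First I would check that $\phi$ and $\phi^\vee$ induce maps on the Greenberg Selmer groups over $\Q_\infty$: since $A=V/T$ and $A'=V/T'$ with $A^{\pm}=V^{\pm}/T^{\pm}$ and $(A')^{\pm}=V^{\pm}/(T')^{\pm}$, both isogenies are restrictions of multiplication maps on $V$ and so respect the ordinary filtration $V^+\subset V$; thus at each prime they carry $H^1_{\op{ord}}$ (resp. $H^1_{\op{nr}}$ away from $p$) into the corresponding local subgroup for the other module, and the induced maps on $H^1(\Q_S/\Q_\infty,-)$ restrict to maps $\op{Sel}_{\op{Gr}}(A/\Q_\infty)\to\op{Sel}_{\op{Gr}}(A'/\Q_\infty)$ and back. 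Dualizing, we obtain $\Lambda$-linear maps $u\colon X'\to X$ (from $\phi$) and $w\colon X\to X'$ (from $\phi^\vee$), and since $\phi^\vee\circ\phi=[p]$ on $A$ and $\phi\circ\phi^\vee=[p]$ on $A'$, functoriality of Galois cohomology gives $u\circ w=p\cdot\op{id}_X$ and $w\circ u=p\cdot\op{id}_{X'}$.

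The crucial step uses $\mu(A)=0$ to make $w$ injective. Since $\op{Sel}_{\op{Gr}}(A/\Q_\infty)$ is cotorsion and $\mu(A)=0$, the implication (1)$\Rightarrow$(2) of Lemma \ref{basic lemma on mu} shows that $X$ is a finitely generated $\Z_p$-module; its $\Z_p$-torsion submodule is a finite $\Lambda$-submodule of $X$, hence is trivial by Proposition \ref{propn no nontriv}, so $X$ is $\Z_p$-free and in particular $X[p]=0$. From $u\circ w=p\cdot\op{id}_X$ it follows that $\ker w\subseteq X[p]=0$, so $w$ embeds $X$ as a $\Lambda$-submodule $Y':=w(X)\subseteq X'$ with $Y'\cong X$. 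Moreover $w\circ u=p\cdot\op{id}_{X'}$ gives $pX'=w\bigl(u(X')\bigr)\subseteq w(X)=Y'$, so the quotient $X'/Y'$ has exponent $p$.

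To finish, I would apply Proposition \ref{g(X) >= g(Y)/2} to the finitely generated torsion $\Lambda$-module $X'$, which has no non-trivial finite $\Lambda$-submodule, with the submodule $Y'$ for which $X'/Y'$ has exponent $p$; this gives $g(X')\geq g(Y')/2$. Since $Y'\cong X$ we have $g(Y')=g(X)$, and therefore $g(A')=g(X')\geq g(X)/2=g(A)/2$, as claimed. I expect the main obstacle to be the first paragraph: one must check carefully that the isogenies are compatible with the Greenberg local conditions at $p$ and at the bad primes over every finite layer $\Q_n$, so that the maps on Selmer groups and the relations $u\circ w=p\cdot\op{id}_X$, $w\circ u=p\cdot\op{id}_{X'}$ are all legitimate; once this is in place the rest is formal, the one genuinely used input being that $\mu(A)=0$ forces $X$ to be $\Z_p$-torsion-free.
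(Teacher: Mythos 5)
Your proof is correct and follows the same overall strategy as the paper: reduce, via the isogeny $\phi^\vee$, to an inclusion of Pontryagin duals with exponent-$p$ cokernel, and then invoke Proposition \ref{g(X) >= g(Y)/2}. Where you diverge is in how you establish injectivity of $w=\Psi$ and the exponent-$p$ bound on the cokernel: the paper derives these by noting $H^1(\Q_S/\Q_\infty,\ker\phi^\vee)$ and $H^2(\Q_S/\Q_\infty,\ker\phi^\vee)$ are $p$-torsion and passing through the long exact cohomology sequence, whereas you use the identities $\phi^\vee\circ\phi=[p]$ and $\phi\circ\phi^\vee=[p]$, which give $u\circ w=p\cdot\op{id}_X$ and $w\circ u=p\cdot\op{id}_{X'}$ directly at the level of dualized Selmer groups. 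Your route is slightly more explicit: the paper's assertion that ``taking the dual, $\ker\Psi$ is $p$-torsion'' implicitly requires knowing that $p\cdot\op{Sel}_{\op{Gr}}(A/\Q_\infty)$ lands in the image of the Selmer-level map (not just of the $H^1$-level map), which is exactly what your relation $\phi^\vee_*\circ\phi_*=[p]_*$ supplies; so your argument also cleanly closes that small step. Both versions rely in the same essential way on the hypotheses: $\mu(A)=0$ together with Lemma \ref{basic lemma on mu} and Proposition \ref{propn no nontriv} to show $X$ is $\Z_p$-free (hence $X[p]=0$), and $H^0(\Q,A')=0$ via Proposition \ref{propn no nontriv} to apply Proposition \ref{g(X) >= g(Y)/2} to $X'$.
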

\begin{proof}
    The argument is entirely analogous to that of \cite[Corollary 5.4]{matsuno} and we give a sketch of the details here. There is a $\Lambda$-homomorphism 
    \[\Psi:\op{Sel}_{\op{Gr}}(A/\Q_\infty)^\vee \rightarrow \op{Sel}_{\op{Gr}}(A'/\Q_\infty)^\vee\] induced by the isogeny $\phi^\vee: A'\rightarrow A$. Since $H^1(\Q_S/\Q_\infty, \op{ker}\phi^\vee)$ is a $p$-torsion module, it follows that the cokernel of $\Psi$ has exponent $p$. Let $M$ be a finitely generated and torsion $\Lambda$-module. Then it is easy to see that the following are equivalent:
    \begin{itemize}
        \item $M$ is a finitely generated and free $\Z_p$-module,
        \item $M$ has no non-trivial finite $\Lambda$-submodules and $\mu(M)=0$.
    \end{itemize}
    According to Proposition \ref{propn no nontriv}, $\op{Sel}_{\op{Gr}}(A/\Q_\infty)^\vee$ has no non-trivial finite $\Lambda$-submodules. It is assumed that the $\mu$-invariant of $\op{Sel}_{\op{Gr}}(A/\Q_\infty)^\vee$ is $0$. Therefore, $\op{Sel}_{\op{Gr}}(A/\Q_\infty)^\vee$ is a finitely generated and free $\Z_p$-module. In particular, the kernel of $\Psi$ has no non-trivial $p$-torsion. Since $H^2(\Q_S/\Q_\infty, \op{ker}\phi^\vee)$ is $p$-torsion, the natural long exact sequence shows that the cokernel of the map
    \[H^1(\Q_S/\Q_\infty, A')\rightarrow H^1(\Q_S/\Q_\infty, A)\] is $p$-torsion. Taking the dual, it follows that $\op{ker}\Psi$ is a $p$-torsion module. Thus we deduce that $\Psi$ is injective. Therefore we may view $\op{Sel}_{\op{Gr}}(A/\Q_\infty)^\vee$ as a $\Lambda$-submodule of $\op{Sel}_{\op{Gr}}(A'/\Q_\infty)^\vee$. We set $X:=\op{Sel}_{\op{Gr}}(A'/\Q_\infty)^\vee$ and $Y:=\op{Sel}_{\op{Gr}}(A/\Q_\infty)^\vee$. We have shown that $Y$ is contained in $X$ and $X/Y$ has exponent $p$. We also note that by Proposition \ref{propn no nontriv} that $X$ has no non-trivial finite $\Lambda$-submodules. Therefore by Proposition \ref{g(X) >= g(Y)/2}, we conclude that $g(X)\geq g(Y)/2$, i.e., $g(A')\geq g(A)/2$.  
\end{proof}

 \begin{proposition}\label{g(A')>=n/2}
    Suppose that the following conditions hold:
\begin{enumerate}
    \item $H^0(\Q, A)=0$ and $H^0(\Q, A')=0$,
    \item there is a prime $\ell$ for which $H^0(\Q_\ell, A)$ is finite,
    \item $\mu(A)=0$.
\end{enumerate}   
    Moreover assume that there are $n$ primes $\ell_1, \dots, \ell_n\in S$ such that $\delta_{\ell_i}(A)>0$ for all $i=1, \dots, n$. Then, we find that $g(A)\geq n$ and $g(A')\geq n/2$.
 \end{proposition}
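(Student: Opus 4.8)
The plan is to obtain the two inequalities separately and then chain them: the bound $g(A)\ge n$ comes directly from Proposition~\ref{g(A)>= sum delta l}, and the bound $g(A')\ge n/2$ then follows from Lemma~\ref{g(A')>=g(A)/2}. Essentially all of the substantive work has already been carried out in those two results, so this proof amounts to verifying that their hypotheses are in force under conditions (1)--(3) and the standing reducibility setup.

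First I would check that Proposition~\ref{g(A)>= sum delta l} applies: hypothesis (1) supplies $H^0(\Q,A)=0$, and hypothesis (2) supplies a prime $\ell$ with $H^0(\Q_\ell,A)$ finite, which are exactly what that proposition requires. Hence
\[
g(A)=\dim_{\F_p}\bigl(\op{Sel}_{\op{Gr}}(A/\Q_\infty)^{\Gamma}[p]\bigr)\ \geq\ \sum_{\ell\in S}\delta_\ell(A).
\]
Since $\delta_\ell(A)\geq 0$ for every $\ell\in S$ (each $\op{ker}h_{\ell,A}$ is a group, so the dimension of its $p$-torsion is nonnegative), and since by hypothesis $\ell_1,\dots,\ell_n\in S$ satisfy $\delta_{\ell_i}(A)\geq 1$, the right-hand side is at least $n$. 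This gives $g(A)\geq n$.

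Next I would invoke Lemma~\ref{g(A')>=g(A)/2}, whose hypotheses are $H^0(\Q,A)=0$, $H^0(\Q,A')=0$ and $\mu(A)=0$ --- precisely conditions (1) and (3). Implicitly we are working in the reducible setup introduced just before Lemma~\ref{basic lemma on mu}, so that the second lattice $T'$, the quotient $A'$, and the dual isogeny $\phi^\vee$ are defined; and Remark~\ref{remark on cotorsion} guarantees that $\op{Sel}_{\op{Gr}}(A/\Q_\infty)$ and $\op{Sel}_{\op{Gr}}(A'/\Q_\infty)$ are $\Lambda$-cotorsion, so that $g(A)$ and $g(A')$ are well defined. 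Lemma~\ref{g(A')>=g(A)/2} then yields $g(A')\geq g(A)/2$, and combining with the previous step gives $g(A')\geq n/2$.

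I do not expect a genuine obstacle here; the only care needed is bookkeeping --- confirming that the chosen primes $\ell_i$ indeed lie in $S$, that conditions (1)--(3) line up verbatim with the hypotheses of Proposition~\ref{g(A)>= sum delta l} and Lemma~\ref{g(A')>=g(A)/2}, and that the reducibility data needed to even speak of $A'$ and $\phi^\vee$ is part of the ambient hypotheses. The real content of the statement resides in those earlier results, so the proof is short.
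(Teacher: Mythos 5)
Your proof is correct and follows exactly the same route as the paper's: cite Proposition~\ref{g(A)>= sum delta l} for $g(A)\geq\sum_{\ell\in S}\delta_\ell(A)\geq n$, then apply Lemma~\ref{g(A')>=g(A)/2} to conclude $g(A')\geq g(A)/2\geq n/2$. The paper's proof is just these two lines; your extra verification that hypotheses (1)--(3) match the cited results is sound bookkeeping that the paper leaves implicit.
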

\begin{proof}
    Proposition \ref{g(A)>= sum delta l} asserts that
    \[g(A)\geq \sum_{\ell\in S} \delta_\ell(A)\geq n.\]
    Lemma \ref{g(A')>=g(A)/2} then implies that $g(A')\geq g(A)/2\geq n/2$.
\end{proof}

 \begin{proposition}\label{prop BK lower bound}
Assume that:\begin{enumerate}
    \item $H^0(\Q, A)=0$ and $H^0(\Q, A')=0$,
    \item there is a prime $\ell$ for which $H^0(\Q_\ell, A)$ is finite,
    \item $\mu(A)=0$.
\end{enumerate} Moreover, assume that $\delta_{\ell_i}(A)>0$ for $i=1, \dots, n$ for a set of primes $\ell_1, \dots, \ell_n\in S$. Then, 
\[\op{dim}_{\F_p} \op{Sel}_{\op{BK}}(A'/\Q)[p]\geq n/2-\sum_{\ell\in S} \delta_\ell(A').\]
 \end{proposition}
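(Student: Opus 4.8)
The plan is to sandwich $g(A')$ between two bounds: from above by $\dim_{\F_p}\op{Sel}_{\op{BK}}(A'/\Q)[p]+\sum_{\ell\in S}\delta_\ell(A')$, obtained by running the left‑exact portions of the diagram chase in the proof of Proposition \ref{g(A)>= sum delta l} with $A'$ in place of $A$, and from below by $n/2$, which is exactly the content of Proposition \ref{g(A')>=n/2}. Comparing the two gives the claimed inequality immediately.

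First I would observe that hypotheses (1)--(3) together with the assumption that $\delta_{\ell_i}(A)>0$ for $\ell_1,\dots,\ell_n\in S$ are precisely the hypotheses of Proposition \ref{g(A')>=n/2}; hence that proposition applies and yields $g(A')\geq n/2$. Here $g(A')$ is the minimal number of generators of $\op{Sel}_{\op{Gr}}(A'/\Q_\infty)^\vee$, and by the formula recorded in Section \ref{s 2.2} we have $g(A')=\dim_{\F_p}\op{Sel}_{\op{Gr}}(A'/\Q_\infty)^\Gamma[p]$ (since $\cO=\Z_p$, so $\varpi=p$ and $\kappa=\F_p$).

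Next I would check that the commutative diagram \eqref{BK exact diagram} and its spliced form \eqref{diagram1} are available with $A$ replaced by $A'$. Indeed $A'=V/T'$ for the Galois–stable lattice $T'$, $A'$ is unramified outside $S$, the local Bloch--Kato conditions for $A'$ are contained in the Greenberg conditions just as for $A$, and since $H^0(\Q,A')=0$ by hypothesis (1) and $\Q_\infty/\Q$ is pro-$p$, we get $H^0(\Q_\infty,A')=0$ by \cite[Corollary 1.6.13]{NSW}; consequently the middle vertical map in the $A'$-analogue of \eqref{diagram1} is an isomorphism. Exactly as in the derivation of \eqref{exact sequence 4 term}, applying the snake lemma to the resulting short exact sequence $0\to\op{Sel}_{\op{BK}}(A'/\Q)\to\op{Sel}_{\op{Gr}}(A'/\Q_\infty)^\Gamma\to\op{ker}h_{A'}'\to 0$ (where $h_{A'}'$ is the restriction of the analogue $h_{A'}$ of $h_A$ to the image of the global-to-local map for $A'$) produces a left-exact sequence
\[0\to\op{Sel}_{\op{BK}}(A'/\Q)[p]\to\op{Sel}_{\op{Gr}}(A'/\Q_\infty)^\Gamma[p]\to\op{ker}h_{A'}'[p].\]
The $A'$-analogue of \eqref{h' and h exact sequence} gives $\op{ker}h_{A'}'\hookrightarrow\op{ker}h_{A'}$, so $\dim_{\F_p}\op{ker}h_{A'}'[p]\leq\dim_{\F_p}\op{ker}h_{A'}[p]=\sum_{\ell\in S}\delta_\ell(A')$, and therefore
\[g(A')=\dim_{\F_p}\op{Sel}_{\op{Gr}}(A'/\Q_\infty)^\Gamma[p]\leq\dim_{\F_p}\op{Sel}_{\op{BK}}(A'/\Q)[p]+\sum_{\ell\in S}\delta_\ell(A').\]

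Combining with $g(A')\geq n/2$ and rearranging gives $\dim_{\F_p}\op{Sel}_{\op{BK}}(A'/\Q)[p]\geq g(A')-\sum_{\ell\in S}\delta_\ell(A')\geq n/2-\sum_{\ell\in S}\delta_\ell(A')$, as desired. The one point I would expect to require care is the transfer of the Section \ref{s 3} machinery from $A$ to the isogenous lattice $A'$: that diagram \eqref{BK exact diagram} is exact for $A'$, that $g(A')=\dim_{\F_p}\op{Sel}_{\op{Gr}}(A'/\Q_\infty)^\Gamma[p]$, and that $\op{Sel}_{\op{Gr}}(A'/\Q_\infty)^\vee$ is cotorsion with no nontrivial finite $\Lambda$-submodule. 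Each of these follows from hypothesis (1) $H^0(\Q,A')=0$ (together with Proposition \ref{propn no nontriv} applied to $A'$ and Remark \ref{remark on cotorsion}), and crucially the upper bound above uses only the \emph{left-exact} portions of the relevant diagrams, so no self-duality of the Bloch--Kato Selmer conditions for $A'$ is needed — that is why no analogue of \eqref{cok Phi[p]} enters here.
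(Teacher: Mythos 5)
Your proposal is correct and matches the paper's argument in essence: apply Proposition \ref{g(A')>=n/2} to get $g(A')\geq n/2$, then use the left-exact sequence $0\to\op{Sel}_{\op{BK}}(A'/\Q)[p]\to\op{Sel}_{\op{Gr}}(A'/\Q_\infty)^\Gamma[p]\to\bigoplus_{\ell\in S}\op{ker}h_{\ell,A'}[p]$ (which the paper states directly and you derive from the $A'$-analogue of the diagram chase) to obtain $\dim_{\F_p}\op{Sel}_{\op{BK}}(A'/\Q)[p]\geq g(A')-\sum_{\ell\in S}\delta_\ell(A')$. Your observation that only the left-exact portions are needed, so no analogue of \eqref{cok Phi[p]} or self-duality for $A'$ enters, is a correct and useful clarification of a point the paper leaves implicit.
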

 \begin{proof}
     It follows from Proposition \ref{g(A')>=n/2} that $g(A')\geq n/2$.  
     We have an exact sequence
     \[0\rightarrow \op{Sel}_{\op{BK}}(A'/\Q)[p]\rightarrow \op{Sel}_{\op{Gr}}(A'/\Q_\infty)^\Gamma [p]\rightarrow \bigoplus_{\ell\in S} \op{ker}h_{\ell,A'}[p].\]
     Thus we have that 
     \[\op{dim}_{\F_p} \op{Sel}_{\op{BK}}(A'/\Q)[p]\geq g(A')-\sum_{\ell\in S} \delta_\ell(A'),\] and the result follows. 
 \end{proof}

 \section{Deformations of reducible Galois representations}\label{s 4}
The deformation theory of Galois representations is a powerful tool introduced by Mazur. Over the years, it has been employed crucially in establishing the modularity of Galois representations $\rho$ in characteristic zero, when it is known that the residual representation $\bar{\rho}$ is modular. We shall use this technique to construct modular Galois representations with prescribed local properties. Starting with a residual representation $\bar{\rho}:\op{G}_{\Q}\rightarrow \op{GL}_2(\F_q)$, we study the functor of deformations $\rho_R:\op{G}_{\Q}\rightarrow \op{GL}_2(R)$ of $\bar{\rho}$, where $R$ is a local ring with residue field $\F_q$. We begin by setting up some notation and recalling some well-known facts from Galois cohomology.

\subsection{Cohomological facts} Given a finite dimensional vector space over $\F_p$, we set $V^\vee:=\op{Hom}\left(V, \F_p\right)$. Let $M$ be a finite $\F_p[\op{G}_{\Q}]$-module, denote by $M^*:=\op{Hom}\left(M, \mu_p\right)$ its Tate dual. For $i\in \{1, 2\}$ and a finite set of primes $\Sigma$ containing $p$ and the primes which ramify for the Galois action on $M$, set 
\[\Sh_{\Sigma}^i(M):=\op{ker}\left(H^i(\Q_\Sigma/\Q, M)\longrightarrow \bigoplus_{\ell\in \Sigma} H^i(\Q_\ell, M)\right).\] By the Poitou-Tate duality theorem 
\[\Sh_{\Sigma}^2(M)\simeq \Sh_{\Sigma}^1(M^*)^\vee.\]

For each prime $\ell\in \Sigma\cup \{\infty\}$, let $\cL_\ell$ be a subspace of $H^1(\Q_\ell, M)$. Since $p$ is odd, $H^1(\mathbb{R}, M)=0$, and in particular, this forces $\cL_\infty=0$. The data $\cL=\{\cL_\ell\}_{\ell\in \Sigma}$ is known as a \emph{Selmer datum}, and the associated Selmer group is defined as the following kernel:
\[H^1_{\cL}(\op{G}_{\Sigma}, M):=\op{ker}\left\{H^1(\Q_{\Sigma}/\Q, M)\longrightarrow \bigoplus_{\ell\in \Sigma} \left(\frac{H^1(\Q_\ell, M)}{\cL_\ell}\right)\right\}.\]
Let $\cL_\ell^\perp\subseteq H^1(\Q_\ell, M^*)$ consist of classes that pair to $0$ with $\cL_\ell$. The dual Selmer group is defined as follows:
\[H^1_{\cL^\perp}(\op{G}_{\Sigma}, M^*):=\op{ker}\left\{H^1(\Q_{\Sigma}/\Q, M^*)\longrightarrow \bigoplus_{\ell\in \Sigma} \left(\frac{H^1(\Q_\ell, M^*)}{\cL_\ell^\perp}\right)\right\}.\]
Wiles' formula \cite[Theorem 8.7.9]{NSW} gives a relationship between the dimensions of the Selmer and dual Selmer groups:
\begin{equation}\label{wiles formula}\begin{split}\op{dim} H^1_{\cL}(\op{G}_{\Sigma}, M)-\op{dim} H^1_{\cL^\perp}(\op{G}_{\Sigma}, M^*) = & \op{dim} H^0(\Q, M)-\op{dim} H^0(\Q, M^*) \\ &+ \sum_{\ell\in \Sigma\cup \{\infty\}} \left(\op{dim}\cL_\ell-\op{dim} H^0(\Q_\ell, M) \right).\end{split}\end{equation}
 The Poitou--Tate sequence gives the following exact sequence:
\begin{equation}\label{PT les}
    \begin{split}
        0 &\rightarrow  H^1_{\cL}(\op{G}_{\Sigma}, M)\rightarrow H^1(\Q_{\Sigma}/\Q, M)\rightarrow \bigoplus_{\ell\in \Sigma} \left(\frac{H^1(\Q_\ell, M)}{\cL_\ell}\right) \\
        & \rightarrow H^1_{\cL^\perp}(\op{G}_{\Sigma}, M^*)^\vee\rightarrow H^2(\Q_\Sigma/\Q, M)\rightarrow \bigoplus_{\ell\in \Sigma} H^2(\Q_\ell, M),
    \end{split}
\end{equation}
see for instance, \cite[p.~555, l.7]{tayloricosahedral}.
\subsection{Deformation functors}
\par A significant part of the theory is understanding how deformations behave locally at each prime $\ell$. There are local-global principles that can be formulated via Galois cohomological techniques. In order to understand global deformations of $\bar{\rho}$, it is enough to understand the local deformations at each prime $\ell$ and ensure that these local deformations satisfy certain conditions encapsulated by subfunctors. Let $\cO := W(\F_q)$ denote the ring of Witt vectors with residue field $\F_q$ and note that $p$ is the uniformizer of $\cO$. Consider a character $\varphi: \op{G}_{\Q} \rightarrow \F_q^\times$ and the reducible Galois representation

\[
\bar{\rho} = \begin{pmatrix} \varphi & * \\ & 1 \end{pmatrix}: \op{G}_{\Q} \rightarrow \op{GL}_2(\F_q).
\]

The isomorphism class of the entry $*$ can be viewed as a cohomology class in $H^1(\Q, \F_q(\varphi))$. Let $S$ be the set of primes $\ell$ such that either $\ell = p$ or $\ell \neq p$ and $\bar{\rho}$ is ramified at $\ell$. Given a prime $\ell$ and a Galois representation $\rho: \op{G}_{\Q} \rightarrow \op{GL}_2(\cdot)$, we denote by $\rho_{|\ell}$ the restriction of $\rho$ to $\op{G}_{\ell}$.

Let $\op{CNL}_{\cO}$ be the category whose objects are complete noetherian local $\cO$-algebras $R$ with maximal ideal $\mathfrak{m}$, such that $R/\mathfrak{m} \cong \F_q$ as an $\cO$-algebra. Given $R \in \op{CNL}_{\cO}$, there is a unique $\cO$-algebra map $\pi: R \rightarrow \F_q$, which induces the isomorphism $R/\mathfrak{m} \xrightarrow{\sim} \F_q$. A morphism in this category is a map of $\cO$-algebras $\phi: R_1\rightarrow R_2$. An important family of examples includes $\cO/p^m$ for $m \in \Z_{\geq 1}$.

Let $\cG$ denote the group $\op{G}_{\Q}$ or $\op{G}_{\ell}$ (for a prime $\ell$), and let $\bar{\varrho}$ denote $\bar{\rho}$ or $\bar{\rho}_{|\ell}$ respectively. A \textit{lift} $\varrho: \cG \rightarrow \op{GL}_2(R)$ of $\bar{\varrho}$ is a homomorphism that reduces to $\bar{\varrho}$, as illustrated in the following commutative diagram:

\[
\begin{tikzpicture}[node distance = 2.5 cm, auto]
    \node at (0,0) (G) {$\cG$};
    \node (A) at (3,0) {$\op{GL}_2(\F_q)$.};
    \node (B) at (3,2) {$\op{GL}_2(R)$};
    \draw[->] (G) to node [swap]{$\bar{\varrho}$} (A);
    \draw[->] (B) to node{} (A);
    \draw[->] (G) to node {$\varrho$} (B);
\end{tikzpicture}
\]
The vertical arrow is induced by the quotient map $\pi: R\rightarrow \F_q$. The set of lifts $\varrho:\cG\rightarrow \op{GL}_2(R)$ is denoted $\op{Lift}_{\bar{\varrho}}(R)$. This association gives rise to a functor of lifts
\[\op{Lift}_{\bar{\varrho}}: \op{CNL}_{\cO}\rightarrow \op{Sets}.\]
Given a prime $\ell$, and $\rho\in \op{Lift}_{\bar{\rho}}(R)$, the restriction $\rho_{|\ell}$ is a lift of $\bar{\rho}_{|\ell}$. This gives rise to a natural transformation
\[\op{Lift}_{\bar{\rho}}\rightarrow \op{Lift}_{\bar{\rho}_{|\ell}}.\]

\begin{definition}
We set $\widehat{\op{GL}_2}(R)$ to be the kernel of the reduction map 
\[\op{GL}_2(R)\rightarrow \op{GL}_2(\F_q).\]Two lifts $\varrho, \varrho' : \cG \rightarrow  \op{GL}_2(R)$ of $\bar{\varrho}$ are said to be \textit{strictly equivalent} if $\varrho' = A\varrho A^{-1}$ for some $A \in \widehat{\op{GL}_2}(R)$. A \textit{deformation} is a strict equivalence class of lifts.
\end{definition}
Denote by $\op{Def}_{\bar{\rho}}$ and $\op{Def}_{\bar{\rho}_{|\ell}}$ the associated functors of deformations. For each prime $\ell$, the restriction of a global deformation to $\op{G}_{\ell}$ gives a natural transformation 
\[\op{Def}_{\bar{\rho}}\rightarrow \op{Def}_{\bar{\rho}_{|\ell}}.\]
\begin{definition}
    Let $\ell$ be a prime. A \emph{deformation functor} at $\ell$ is a subfunctor $\mathcal{C}_\ell$ of $\op{Def}_{\bar{\rho}_{|\ell}}$. For each coefficient ring $R$, the set $\mathcal{C}_\ell(R)$ is a subset of $\op{Def}_{\bar{\rho}_{|\ell}}(R)$. We say that a deformation $\varrho : \op{G}_\ell \to \op{GL}_2(R)$ of $\bar{\rho}_{\restriction \ell}$ satisfies $\mathcal{C}_\ell$ if $\varrho \in \mathcal{C}_\ell(R)$. 
\end{definition} 
\subsection{Tangent and obstruction spaces}
The lifting strategy is based on a local-global principle, where local deformation functors are fixed and global deformations are required to satisfy these functors at the primes where they are allowed to ramify.

\begin{definition}\label{adgaloisaction}
Let $\op{Ad}\bar{\varrho}$ denote the Galois module whose underlying vector space consists of $2 \times 2$ matrices with entries in $\mathbb{F}_q$. The Galois action is as follows: for $g \in \cG$ and $v \in \op{Ad}\bar{\varrho}$, set $g \cdot v := \bar{\rho}(g) v \bar{\rho}(g)^{-1}$. Let $\op{Ad}^0\bar{\varrho}$ be the $\cG$-stable submodule of trace zero matrices.  
\end{definition}
We note that $\op{Ad}\bar{\varrho}$ breaks up into a direct sum
\[\op{Ad}\bar{\varrho}=\op{Ad}^0\bar{\varrho}\oplus \F_q\cdot \op{Id},\] where the second summand is generated by the scalar matrices. Let $D\in \mathcal{C}_{\cO}$ be the ring of dual numbers given by $D:=\mathbb{F}_q[\epsilon]/(\epsilon^2)$. An infinitesimal deformation $\varrho: \cG\rightarrow \op{GL}_2(D)$ of $\bar{\varrho}$ is a deformation of $\bar{\varrho}$ to $\op{GL}_2(D)$. We describe how infinitesimal deformations are in bijection with cohomology classes $f\in H^1(\cG, \op{Ad}\bar{\rho})$. Let $\widetilde{f}\in Z^1(\cG, \op{Ad}\bar{\rho})$ be a cocycle representing $f$. We check that the formula 
\[(\op{Id}+\epsilon \widetilde{f})\bar{\rho}: \cG\rightarrow\op{GL}_2(D)\] defines a homomorphism lifting $\bar{\rho}$. We see that 
\[\begin{split}& (\op{Id}+\epsilon \widetilde{f}(\sigma_1))\bar{\rho}(\sigma_1)(\op{Id}+\epsilon \widetilde{f}(\sigma_2))\bar{\rho}(\sigma_2) \\
& =(\op{Id}+\epsilon \widetilde{f}(\sigma_1)) (\op{Id}+\epsilon \bar{\rho}(\sigma_1)\widetilde{f}(\sigma_2)\bar{\rho}(\sigma_1)^{-1})\bar{\rho}(\sigma_1)\bar{\rho}(\sigma_2) \\
& = (\op{Id}+\epsilon \widetilde{f}(\sigma_1)+\epsilon \sigma_1\widetilde{f}(\sigma_2))\bar{\rho}(\sigma_1\sigma_2) \\ 
& = (\op{Id}+\epsilon \widetilde{f}(\sigma_1\sigma_2))\bar{\rho}(\sigma_1\sigma_2),\end{split}\] and it is clear from our definition that $(\op{Id}+\epsilon \widetilde{f})\bar{\rho}$ reduces to $\bar{\rho}$ modulo $(\epsilon)$. The deformation represented by $(\op{Id}+\epsilon \widetilde{f})\bar{\rho}$ depends only on $f$, and is referred to as the \emph{twist} by $f$. We denote this deformation by $(\op{Id}+\epsilon f)\bar{\rho}$. The association 
\[f\mapsto (\op{Id}+\epsilon f)\bar{\rho}\] gives a bijection 
\[H^1(\cG, \op{Ad}\bar{\rho})\leftrightarrow \op{Def}_{\bar{\varrho}}(D).\]
The mod $(\epsilon)$ reduction map $D\rightarrow\F_q $ is a special case of what's known as a \emph{small extension}, and we can extend the above bijection to a generalized setting. 

\begin{definition}
    Let $R$ and $S$ be coefficient rings over $\cO$ with maximal ideals $\mathfrak{m}_R$ and $\mathfrak{m}_S$ respectively. A surjective map of $\cO$-algebras $\pi: R\rightarrow S$ is said to be a \emph{small extension} if:
    \begin{itemize}
        \item $\op{ker}\pi$ is generated by a single element $t$,
        \item $t \mathfrak{m}_R=0$. 
    \end{itemize}
\end{definition}
\begin{remark}We note that for $n\geq 1$, the reduction map
\[\pi_n : \cO/(p^{n+1})\rightarrow \cO/(p^n)\] is a small extension, with $\op{ker}\pi_n$ generated by $p^n$. 
\end{remark}
Given a small extension $\pi: R\rightarrow S$ with kernel $(t)$, consider the reduction map at the level of deformations
\[\pi^*: \op{Def}_{\bar{\varrho}}(R)\rightarrow \op{Def}_{\bar{\varrho}}(S)\] defined by $\pi^*(\varrho):=\varrho\mod{(t)}$. 
Suppose that $\varrho_S\in \op{Def}_{\bar{\varrho}}(S)$ is a deformation of $\bar{\varrho}$ such that there exists $\varrho_R\in \op{Def}_{\bar{\varrho}}(R)$ such that $\varrho_S=\varrho_R\mod{(t)}$. Let $f\in H^1(\cG, \op{Ad}\bar{\rho})$ and set 
\[\varrho_R':=(\op{Id}+t f)\varrho_R.\] Here, the product $t f$ is well defined since $t\mathfrak{m}_R=0$. Then $\varrho_R'$ is a well defined homomorphism satisfying the property that $\pi^*(\varrho_R')=\varrho_S$. We refer to $\varrho_R'$ as the \emph{twist} of $\varrho_R$ by $f$. This twisting operation \[H^1(\cG, \op{Ad}\bar{\rho})\times (\pi^*)^{-1}(\varrho_S)\rightarrow (\pi^*)^{-1}(\varrho_S)\] is simply transitive and $(\pi^*)^{-1}(\varrho_S)$ is an $H^1(\cG, \op{Ad}\bar{\rho})$-torsor (assuming that it is non-empty).

\par Let $\nu: \cG\rightarrow \cO^\times$ be a choice of a lift of $\op{det} \bar{\varrho}$. Given $R\in \op{CNL}_{\cO}$, set $\nu_R:\cG\rightarrow R^\times$ to denote the composite of $\nu$ with the structure map $\cO^\times \rightarrow R^\times$. We take $\op{Def}_{\bar{\varrho}}^\nu(R)$ to be the subset of $\op{Def}_{\bar{\varrho}}(R)$ consisting of deformations $\varrho$ such that $\op{det}\varrho=\nu_R$. Now suppose that $\varrho_R\in \op{Def}_{\nu}^\nu(R)$ and $f\in H^1(\cG, \op{Ad}^0\bar{\rho})$. Then the twist $\varrho_R':=(\op{Id}+t f) \varrho_R$ also has the property that $\op{det}\varrho_R'=\nu_R$. Consider the map 
\[\pi_\nu^*: \op{Def}_{\bar{\varrho}}^\nu(R)\rightarrow \op{Def}_{\bar{\varrho}}^\nu(S) \]
induced from restricting $\pi^*$ to deformations to those with determinant $\nu$. Then, we find that $(\pi_\nu^*)^{-1}(\varrho_S)$ is an $H^1(\cG, \op{Ad}^0\bar{\rho})$-torsor.

\par We now discuss the cohomological obstructions to lifting. Let $\pi: R\rightarrow S$ be a small extension and $\varrho_S\in \op{Def}_{\bar{\varrho}}(S)$. Let $A=\mtx{a}{b}{c}{-a}\in \op{Ad}\bar{\rho}$ and choose a lift $\widetilde{A}=\mtx{\widetilde{a}}{\widetilde{b}}{\widetilde{c}}{-\widetilde{a}}$ with entries in $R$. The entries $\widetilde{a}, \widetilde{b}, \widetilde{c}\in R$ are lifts of $a, b, c$ respectively. Then, we set $t A$ to denote the matrix $t \widetilde{A}:=\mtx{t \widetilde{a}}{t\widetilde{b}}{t\widetilde{c}}{-t\widetilde{a}}$. Since $t\mathfrak{m}_R=0$, it is easy to see that $t\widetilde{A}$ is independent of the choice of $\widetilde{A}$. Note that $t^2=0$, and therefore the determinant of $\op{Id}+t A$ is seen to be equal to $1$. Setting 
\[H:=\op{ker}\left(\op{SL}_2(R)\rightarrow \op{SL}_2(S)\right),\] we find that the association taking $A\in \op{Ad}^0\bar{\rho}$ to $\op{Id}+t A$ gives an isomorphism 
\[\op{Ad}^0\bar{\rho}\xrightarrow{\sim} H.\]
Let $\varrho:\cG\rightarrow \op{GL}_2(S)$ be a deformation of $\bar{\varrho}$ with determinant $\nu_S$. Note that there always exists a set theoretic lift \[\widetilde{\varrho}: \cG\rightarrow \op{GL}_2(R)\] with determinant $\nu_R$. Note that  $\widetilde{\varrho}(\sigma_1\sigma_2)\widetilde{\varrho}(\sigma_2)^{-1}\widetilde{\varrho}(\sigma_1)^{-1}$ is trivial modulo $(t)$ and has determinant equal to $1$. Thus, we can write \[\widetilde{\varrho}(\sigma_1\sigma_2)\widetilde{\varrho}(\sigma_2)^{-1}\widetilde{\varrho}(\sigma_1)^{-1}=\op{Id}+t \cO(\sigma_1, \sigma_2).\] The map $\cO(\varrho):\cG\times \cG\rightarrow \op{Ad}^0\bar{\rho} $ is a $2$-cocycle, we call $\cO(\varrho)\in H^2(\cG, \op{Ad}^0\bar{\rho})$ the \emph{obstruction to lifting $\varrho$}. Thus, $\cO(\varrho)=0$ if and only if a homomorphism $\widetilde{\varrho}$ lifting $\varrho$ does exist.

\subsection{The lifting result of Hamblen and Ramakrishna}
 \par In this subsection, we prove a variant of \cite[Theorem 2]{hamblenramakrishna}. Given a finite set of primes $\Sigma$, recall that $\Q_\Sigma\subset \bar{\Q}$ is the maximal extension in which primes $\ell\notin \Sigma$ are unramified, and set $\op{G}_\Sigma:=\op{Gal}(\Q_\Sigma/\Q)$. Denote by $\chi$ the $p$-adic cyclotomic character and $\bar{\chi}$ its mod $p$ reduction. Given a character $\varphi:\op{G}_{\Q}\rightarrow \F_q^\times$, let $\widetilde{\varphi}$ denote the Teichm\"uller lift of $\varphi$. We take $\omega$ to denote the Teichm\"uller character, i.e., the Teichm\"uller lift of $\bar{\chi}$. Although the result applies to both even and odd representations, we only concern ourselves with the statement in the odd case. 

\begin{theorem}[Hamblen and Ramakrishna]\label{hamblen ramakrishna thm}
    Let $p$ be an odd prime number and $q$ be a power of $p$. Set $\F_q$ to be the field with $q$ elements and let $\cO$ denote its ring of Witt vectors with fraction field $\cK$. Consider a reducible mod $p$ Galois representation 
    \[\bar{\rho}=\mtx{\varphi}{\ast}{0}{1}:\op{G}_{\Q}\rightarrow \op{GL}_2(\F_q)\] and $\Sigma$ be the set of primes containing $p$ and the primes at which $\bar{\rho}$ is ramified. Assume that the following conditions hold:
    \begin{enumerate}
        \item $\bar{\rho}$ is indecomposable, 
        \item $\varphi^2\neq 1$ and $\varphi\notin \{ \bar{\chi}, \bar{\chi}^{-1}\}$. Moreover, the $\F_p$-span of the image of $\varphi$ is all of $\F_q$; 
        \item $\bar{\rho}_{|p}$ is not an unramified unipotent representation $\mtx{1}{\ast}{0}{1}$. 
    \end{enumerate}
    Then there a finite set of primes $X$ containing $\Sigma$ and a deformation $\rho: \op{G}_{X}\rightarrow \op{GL}_2(\cO)$ such that 
    \begin{itemize}
        \item $\rho$ is irreducible, 
        \item $\op{det}\rho=\widetilde{\varphi} \chi^{p^2(p-1)}$.
        \item The local representation $\rho_{|p}$ is ordinary, i.e., there is an unramified character $\gamma: \op{G}_p\rightarrow \cO^\times$ such that 
        \[\rho_{|p}=\mtx{\widetilde{\varphi} \chi^{p^2(p-1)} \gamma}{\ast}{0}{\gamma^{-1}}.\]
    \end{itemize}
\end{theorem}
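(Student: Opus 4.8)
The plan is to follow the deformation-theoretic method of Hamblen and Ramakrishna \cite{hamblenramakrishna}, modifying the bookkeeping so as to land on a lift with determinant $\widetilde{\varphi}\,\chi^{p^2(p-1)}$ that is ordinary at $p$. First fix the target determinant $\nu:=\widetilde{\varphi}\,\chi^{p^2(p-1)}$; since $\bar\chi^{\,p-1}=1$ we have $\chi^{p^2(p-1)}\equiv 1\pmod p$, so $\nu$ is a lift of $\det\bar\rho=\varphi$. Next choose local deformation conditions $\cC=\{\cC_\ell\}_{\ell\in\Sigma}$: at $p$ take $\cC_p$ to be the \emph{ordinary} functor, parametrizing deformations of the shape $\begin{pmatrix}\nu\gamma&\ast\\0&\gamma^{-1}\end{pmatrix}$ with $\gamma$ unramified, where hypothesis (3) is exactly what makes this local functor liftable of the expected dimension; at each $\ell\in\Sigma\setminus\{p\}$ take $\cC_\ell$ a liftable (e.g.\ minimally ramified) condition refining $\bar{\rho}_{|\ell}$, whose existence and dimension follow by inspecting the $\op{G}_\ell$-module $\op{Ad}^0\bar{\rho}_{|\ell}$ and using that $p\geq 5$ is odd (so $\cC_\infty=0$ and no small-characteristic pathologies arise). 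The exponent $p^2(p-1)$ is taken deep enough that, beyond the congruence above, the conditions at $p$ and at the auxiliary primes introduced below can be propagated up the tower $\cO/p^n$.

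The second step kills the dual Selmer group. Write $H^1_{\cC}(\op{G}_\Sigma,\op{Ad}^0\bar{\rho})$ and $H^1_{\cC^\perp}(\op{G}_\Sigma,(\op{Ad}^0\bar{\rho})^*)$ for the Selmer and dual Selmer groups attached to $\cC$. Wiles' formula \eqref{wiles formula} together with the Poitou--Tate sequence \eqref{PT les} shows that once $H^1_{\cC^\perp}(\op{G}_X,(\op{Ad}^0\bar{\rho})^*)=0$ for a finite enlargement $X=\Sigma\cup Q$, an inverse-limit lift satisfying every $\cC_\ell$ exists. To force the vanishing one adjoins finitely many Ramakrishna primes: $q\notin\Sigma$, unramified for $\bar\rho$, with $q\not\equiv\pm 1\pmod p$ and $\bar\rho(\op{Frob}_q)$ of eigenvalue ratio $q$; at such $q$ there is a smooth $1$-dimensional local condition $\cC_q$ with $\dim\cC_q=\dim H^0(\Q_q,\op{Ad}^0\bar{\rho})$, so that adjoining $q$ does not change $\dim H^1_{\cC}-\dim H^1_{\cC^\perp}$, and a Chebotarev argument inside the field cut out by $\bar\rho$, $\mu_p$ and a chosen nonzero class of $H^1_{\cC^\perp}$ produces a $q$ whose adjunction strictly lowers $\dim H^1_{\cC^\perp}$. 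Hypotheses (2) -- that $\varphi^2\neq 1$, that $\varphi\notin\{\bar\chi,\bar\chi^{-1}\}$, and that the $\F_p$-span of the image of $\varphi$ is all of $\F_q$ -- are precisely what keep $\op{Ad}^0\bar\rho$ and its Cartier dual from acquiring trivial or cyclotomic subquotients that would make these Chebotarev conditions vacuous; iterating finitely often yields the required $X$.

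With $H^1_{\cC^\perp}(\op{G}_X,(\op{Ad}^0\bar{\rho})^*)=0$, the fixed-determinant deformation functor with the conditions $\{\cC_\ell\}_{\ell\in X}$ is formally smooth, so one lifts inductively from a deformation $\rho_n:\op{G}_X\to\op{GL}_2(\cO/p^n)$ satisfying all conditions to one over $\op{GL}_2(\cO/p^{n+1})$ -- the obstruction being controlled by $H^1_{\cC^\perp}=0$ -- and passes to the limit, obtaining $\rho:\op{G}_X\to\op{GL}_2(\cO)$ with $\det\rho=\widetilde{\varphi}\,\chi^{p^2(p-1)}$ and $\rho_{|p}$ ordinary of the asserted form, $\gamma$ being the unramified character on the quotient of the ordinary filtration. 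It remains to see that $\rho$ is irreducible. Any reducible lift has the form $\begin{pmatrix}\psi_1&\ast\\0&\psi_2\end{pmatrix}$ with $(\bar\psi_1,\bar\psi_2)=(\varphi,1)$ (the diagonal being forced by the indecomposability in (1)) and $\psi_1\psi_2=\nu$; one checks, using indecomposability of $\bar\rho$, the ordinary shape at $p$, and the ramification that $\cC_q$ imposes at the primes of $Q$, that no character $\psi_1\psi_2^{-1}$ with the forced ramification supports an extension class reducing to that of $\bar\rho$, so the reducible locus does not meet the deformation problem. This is where the large cyclotomic twist in $\nu$ and hypotheses (2)--(3) are used decisively.

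The main obstacle, relative to Ramakrishna's original residually irreducible construction, is exactly the reducibility of $\bar\rho$. On the one hand $\op{Ad}^0\bar\rho$ is no longer irreducible, so the Chebotarev inputs producing $Q$ must be run piece by piece along the filtration of $\op{Ad}^0\bar\rho$ -- with graded pieces essentially $\F_q(\varphi)$, $\F_q$, $\F_q(\varphi^{-1})$ and their $\bar\chi$-twists for the Cartier dual -- with (2) ruling out the degenerate cases. On the other hand one must genuinely guarantee the lift is irreducible rather than merely a ramified extension of two characters, a step with no analogue in the irreducible case; together with the verification that the ordinary-at-$p$ local functor is liftable with the nonstandard determinant $\nu$, this accounts for most of the work, and both points are handled by following, with the evident modifications, the arguments of \cite{hamblenramakrishna}.
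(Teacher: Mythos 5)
Your outline correctly identifies the broad shape of the argument (choose a determinant lift, impose local conditions, kill dual Selmer, lift inductively, argue irreducibility), but the actual mechanism you propose is essentially Ramakrishna's residually-\emph{irreducible} method, and several of its key steps fail in the reducible setting. This is precisely what makes the Hamblen--Ramakrishna theorem nontrivial.

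The central difficulty is your choice of auxiliary primes. You propose Ramakrishna primes $q\not\equiv\pm 1\pmod p$ with $\bar\rho(\op{Frob}_q)$ of eigenvalue ratio $q$, and a one-class-at-a-time Chebotarev argument to annihilate $H^1_{\cC^\perp}$. When $\bar\rho$ is reducible, $\op{Ad}^0\bar\rho$ has a nontrivial filtration $U_1\subset U_2\subset U_3=\op{Ad}^0\bar\rho$ with small graded pieces, and the image of $\bar\rho$ is contained in a Borel; the usual Chebotarev conditions used in the irreducible case (which rely on the image containing $\op{SL}_2(\F_p)$) simply do not produce the desired Frobenius elements, and adjoining such a $q$ does not in general strictly lower the dual Selmer group. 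Hamblen and Ramakrishna circumvent this by introducing \emph{trivial primes}: primes $\ell$ with $\ell\equiv 1\pmod p$, $\ell\not\equiv 1\pmod{p^2}$, and $\bar\rho_{|\ell}$ trivial --- note these are diametrically opposite to your $q\not\equiv\pm 1\pmod p$ --- together with carefully designed Type~I and Type~II local conditions. One also needs a preliminary set $T\supset\Sigma$ of trivial primes so that $\Sh^2_T(U_i)=0$ for $i=1,2,3$, in order to even begin lifting $\bar\rho$ to a reducible $\xi_2$ mod $p^2$; none of this is in your sketch.

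Second, your irreducibility argument is structurally wrong. You propose to show a posteriori that the reducible locus avoids the deformation conditions, by analyzing characters $\psi_1,\psi_2$. In fact the irreducibility is forced much earlier: the Type~II local conditions at certain trivial primes make $\rho_2\pmod{p^2}$ already irreducible (Lemma~\ref{new lemma 1} in the paper, corresponding to Proposition~42 of Hamblen--Ramakrishna), and then any characteristic-zero lift is automatically irreducible. There is no step in the argument where one must rule out reducible characteristic-zero lifts by hand, and I do not see how your proposed elimination-of-characters argument could be carried out.

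Third, you omit the Khare--Larsen--Ramakrishna (KLR) step entirely. At the mod $p^2$ level, the pair $(\cC_\ell,\cN_\ell)$ at trivial primes does \emph{not} satisfy the preservation property, so the usual twist-and-lift manoeuvre fails; one needs the KLR method to construct $\rho_2$ satisfying $\cC_\ell$ at all relevant primes before the standard induction can begin at mod $p^3$. This is explicitly remarked in the paper (``This is why [KLR] is necessary when constructing $\rho_2$''). Without this step the inductive lifting argument does not get off the ground. So while you correctly flag ``the reducibility of $\bar\rho$'' as the main obstacle and gesture at running the argument ``piece by piece along the filtration,'' the concrete devices you invoke (Ramakrishna primes, a posteriori irreducibility) are not the ones that overcome that obstacle, and the ones that do (trivial primes, Type~I/II conditions, KLR for $\rho_2$, irreducibility at the mod-$p^2$ stage) are missing.
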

The lift $\rho$ is \emph{geometric} in the sense of Fontaine and Mazur \cite{fontainemazur}. A result of Skinner and Wiles \cite{skinnerwiles} shows that $\rho$ is modular, i.e., arises from a stable lattice in a Galois representation associated to a modular form. We remark that Theorem \ref{hamblen ramakrishna thm} has been generalized to reducible Galois representations taking values in higher rank symplectic groups \cite{Raydefreducible}, and by Fakhruddin, Khare and Patrikis \cite{FKP} for general reductive groups (the latter work also relaxes some conditions on $\bar{\rho}$ in the $2$-dimensional case).

\par Let us fix a lift $\nu: \op{G}_{\Q} \rightarrow \cO^\times$ of the determinant character $\varphi = \det \bar{\rho}$ of the form $\nu=\alpha \chi^{k-1}$ where $k\geq 2$ is an integer and $\alpha$ is a character of prime to $p$ order. Throughout the remainder of Section \ref{s 4}, all deformations under consideration will have determinant $\nu$. Recall that Hamblen and Ramakrishna chose the specific lift $\nu := \widetilde{\varphi} \chi^{p^2(p-1)}$; we extend their method to allow for more general lifts $\nu$ satisfying the aforementioned condition on the determinant. 

\par The strategy employed by Ramakrishna \cite{RaviInventMath, RamakrishnaJRMS} and Hamblen--Ramakrishna \cite{hamblenramakrishna} is based on constructing a compatible family of deformations $\rho_n$ of $\bar{\rho}$ as depicted:
\[\begin{tikzpicture}[node distance = 1.5 cm, auto]
      \node (GSX) at (0,0){$\op{G}_{\Q}$};
      \node (GL2) at (5,0){$\op{GL}_{2}(\cO/p^{n-1}).$};
      \node (GL2Wn) at (3,2)[above of= GL2]{$\op{GL}_{2}(\cO/p^{n})$};
      \node (GL2Wnplus1) at (5,4){$\op{GL}_{2}(\cO/p^{n+1})$};
      \draw[->] (GSX) to node [swap]{$\rho_{n-1}$} (GL2);
      \draw[->] (GL2Wn) to node {} (GL2);
      \draw[->] (GSX) to node [swap]{$\rho_n$} (GL2Wn);
      \draw[->] (GL2Wnplus1) to node {} (GL2Wn);
      \draw[dashed,->] (GSX) to node {$\rho_{n+1}$} (GL2Wnplus1);
      \end{tikzpicture}\] 
      In the diagram above, $\rho_n$ may not admit a lift to a mod $p^{n+1}$ deformation $\rho_{n+1}$. However, upon replacing $\rho_n$ with $(\op{Id}+p^n f) \rho_n$ for a suitable cohomology class $f\in H^1(\op{G}_{\Q}, \op{Ad}\bar{\rho})$ it is shown that it can be lifted to $\rho_{n+1}$. This construction is repeated at every step. Then $\rho:\op{G}_{\Q}\rightarrow\op{GL}_2(\cO)$ is taken to be the inverse limit $\varprojlim_n \rho_n$. The construction of suitable cohomology classes $f$ (at every step) is based on a local-global argument, where certain local deformation conditions are prescribed at finitely many primes, and $\rho_n$ is constructed so as to satisfy these local conditions defined by subfunctors $\cC_\ell\subseteq \op{Def}_{\bar{\rho}_{|\ell}}$. Thus by definition, for any coefficient ring $R\in \op{CNL}_{\cO}$, we have that $\cC_\ell(R)$ is a subset of $\op{Def}_{\bar{\rho}_{|\ell}}(R)$. A deformation $\varrho\in \op{Def}_{\bar{\rho}_{|\ell}}(R)$ is said to satisfy $\cC_\ell$ if it is contained in $\cC_\ell(R)$. The condition $\cC_\ell$ is \emph{representable} if there is a coefficient ring $R_\ell^{\op{univ}}\in \op{CNL}_{\cO}$ and a universal deformation 
 \[\varrho_\ell^{\op{univ}}: \op{G}_\ell\rightarrow \op{GL}_2(R_\ell^{\op{univ}})\] 
 such that any deformation $\varrho\in \cC_\ell(R)$ arises from a unique map $h: R_\ell^{\op{univ}}\rightarrow R$ as a composition 
 \[\op{G}_\ell\xrightarrow{\varrho_\ell^{\op{univ}}} \op{GL}_2(R_\ell^{\op{univ}})\rightarrow \op{GL}_2(R), \] where the second map is induced by applying $h$ to the entries of $\op{GL}_2(R_\ell^{\op{univ}})$.

 \begin{theorem}[Grothendieck]
    Let $\cC_\ell$ be a subset of $\op{Def}_{\bar{\rho}_{|\ell}}$ defined by a deformation condition that satisfies the following properties:
    \begin{enumerate}
        \item $\mathcal{C}_\ell(\mathbb{F}_q)$ contains only $\bar{\rho}_{\restriction \ell}$.
        \item For $i=1,2$, suppose $R_i \in \mathcal{C}_{\mathcal{O}}$ and $\rho_i \in \mathcal{C}_\ell(R_i)$. If $I_1$ is an ideal of $R_1$ and $I_2$ an ideal of $R_2$ such that there is an isomorphism $\alpha: R_1/I_1 \xrightarrow{\sim} R_2/I_2$ with $\alpha(\rho_1 \mod I_1) = \rho_2 \mod I_2$, then consider the fiber product \[R_3 = \{(r_1, r_2) \mid \alpha(r_1 \mod I_1) = r_2 \mod I_2\}\] and the induced $R_3$-representation $\rho_1 \times_{\alpha} \rho_2$. It follows that $\rho_1 \times_{\alpha} \rho_2 \in \mathcal{C}_\ell(R_3)$.
        \item If $R \in \op{CNL}_{\mathcal{O}}$ with maximal ideal $\mathfrak{m}_R$, and $\rho \in \op{Def}_{\bar{\rho}_{|\ell}}(R)$ such that $\rho \in \mathcal{C}_\ell(R/\mathfrak{m}_R^n)$ for all $n > 0$, then $\rho \in \mathcal{C}_\ell(R)$. This means the functor $\mathcal{C}_\ell$ is continuous.
    \end{enumerate}
Under these conditions, the functor $\cC_\ell$ is representable. 
 \end{theorem}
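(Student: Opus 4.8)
The plan is to recognize this as a special case of Grothendieck's pro\nobreakdash-representability criterion, in the form adapted to deformations of Galois representations (see, e.g., \cite{RaviInventMath, hamblenramakrishna} for this packaging), and to check that hypotheses (1)--(3) supply exactly the input it requires. Recall the criterion: a covariant functor $F\colon \op{CNL}_\cO\to\op{Sets}$ with $F(\F_q)$ a single point is representable by an object of $\op{CNL}_\cO$ provided that (a) $F$ is \emph{continuous}, i.e.\ $F(R)\xrightarrow{\sim}\varprojlim_n F(R/\mathfrak m_R^{\,n})$ for every $R\in\op{CNL}_\cO$; (b) on the full subcategory of Artinian $\cO$-algebras, $F$ carries fibered products along surjections to fibered products of sets, i.e.\ for surjections $R_1\twoheadrightarrow R_0\twoheadleftarrow R_2$ the natural map $F(R_1\times_{R_0}R_2)\to F(R_1)\times_{F(R_0)}F(R_2)$ is bijective; and (c) the tangent space $F\!\left(\F_q[\epsilon]/(\epsilon^2)\right)$ is finite-dimensional over $\F_q$. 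Here hypothesis (1) is precisely that $\mathcal C_\ell(\F_q)=\{\bar\rho_{\restriction\ell}\}$, and hypothesis (3) is precisely continuity (a), so the work is to verify (b) and (c), which by (a) may be carried out on Artinian coefficient rings.

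First I would dispose of (c). An element of $\mathcal C_\ell\!\left(\F_q[\epsilon]/(\epsilon^2)\right)$ is in particular a determinant-$\nu$ deformation of $\bar\rho_{\restriction\ell}$ to the dual numbers, hence, under the bijection recalled above, is classified by a class in $H^1(\op{G}_\ell,\op{Ad}^0\bar\rho)$. Thus $\mathcal C_\ell\!\left(\F_q[\epsilon]/(\epsilon^2)\right)$ injects into $H^1(\op{G}_\ell,\op{Ad}^0\bar\rho)$, which is finite because $\op{G}_\ell$ is topologically finitely generated and $\op{Ad}^0\bar\rho$ is a finite module; by (b) it is in fact an $\F_q$-subspace, but finiteness is all that is needed.

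Next I would verify (b). Set $R_0:=R_1/I_1\cong R_2/I_2$ via $\alpha$ and $R_3:=R_1\times_\alpha R_2$, as in the statement. Surjectivity of $\mathcal C_\ell(R_3)\to\mathcal C_\ell(R_1)\times_{\mathcal C_\ell(R_0)}\mathcal C_\ell(R_2)$ is exactly hypothesis (2): a compatible pair $(\rho_1,\rho_2)$ glues to the representation $\rho_1\times_\alpha\rho_2\colon\op{G}_\ell\to\op{GL}_2(R_3)$, which lies in $\mathcal C_\ell(R_3)$ and restricts along $R_3\to R_i$ to $\rho_i$. For injectivity one uses that $M_2(-)$ commutes with fibered products and that invertibility over these local rings is detected on the common residue field $\F_q$, whence $\widehat{\op{GL}_2}(R_3)=\widehat{\op{GL}_2}(R_1)\times_{\widehat{\op{GL}_2}(R_0)}\widehat{\op{GL}_2}(R_2)$; so two lifts over $R_3$ whose reductions to $R_1$ and to $R_2$ are strictly equivalent, compatibly over $R_0$, are themselves strictly equivalent over $R_3$. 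Granting (a)--(c), the criterion yields a universal pair $(R_\ell^{\op{univ}},\varrho_\ell^{\op{univ}})$ pro-representing $\mathcal C_\ell$ on Artinian rings, and continuity extends the identification $\mathcal C_\ell\cong\op{Hom}_\cO(R_\ell^{\op{univ}},-)$ to all of $\op{CNL}_\cO$.

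The step I expect to require the most care is the injectivity half of (b). Passing from honest lifts to strict-equivalence classes, the conjugating matrices over $R_1$ and over $R_2$ need only agree over $R_0$ up to an element centralizing the reduced representation, so the gluing step tacitly relies on controlling the $\op{Ad}^0$-invariants --- or, equivalently and most cleanly, on first carrying out the argument at the level of framed lifts, where there is no conjugation ambiguity and the fibered-product identity for $\widehat{\op{GL}_2}$ is immediate, and only then passing to the quotient. Everything else is either a verbatim restatement of a hypothesis (for (1) and (3)), the standard dual-numbers computation identifying the tangent space with a Galois cohomology group (for (c)), or elementary commutative algebra of fibered products of complete local rings.
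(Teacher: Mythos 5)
The paper offers no proof of this statement; it is quoted as a black box (attributed to Grothendieck, and in practice this is the deformation-theoretic packaging of Grothendieck's pro-representability criterion, cf.\ Schlessinger and Mazur). Your identification of the result and the overall verification strategy --- check a one-point value on $\F_q$, finite tangent space via $H^1(\op{G}_\ell,\op{Ad}^0\bar\rho)$, compatibility with fibered products along surjections, and continuity --- is the standard route and matches what a complete proof would look like.

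There is, however, a genuine gap in your verification of (b), and you half-notice it yourself. The identity $\widehat{\op{GL}_2}(R_3)=\widehat{\op{GL}_2}(R_1)\times_{\widehat{\op{GL}_2}(R_0)}\widehat{\op{GL}_2}(R_2)$ does not by itself yield the injectivity of $\mathcal C_\ell(R_3)\to\mathcal C_\ell(R_1)\times_{\mathcal C_\ell(R_0)}\mathcal C_\ell(R_2)$: if two lifts over $R_3$ become strictly equivalent over $R_1$ and over $R_2$ via $A_1\in\widehat{\op{GL}_2}(R_1)$ and $A_2\in\widehat{\op{GL}_2}(R_2)$, the images of $A_1$ and $A_2$ in $\widehat{\op{GL}_2}(R_0)$ need only agree up to an element centralizing the reduction of the lift over $R_0$, so they do not lie in the fibered product and hence do not automatically glue. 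Absorbing that centralizer discrepancy is precisely Schlessinger's $(H_4)$ axiom, and it requires a surjectivity-of-centralizers argument that is not supplied by the fibered-product identity for $\widehat{\op{GL}_2}$. Your proposed detour through framed lifts does not close this either: the framed functor is trivially exact, but the ambiguity you are trying to avoid reappears at the very step of descending to the quotient by $\widehat{\op{GL}_2}$-conjugation. In fact, for the trivial primes featuring in this paper $\bar\rho_{\restriction\ell}$ is the trivial representation, so $H^0(\op{G}_\ell,\op{Ad}\bar\rho)$ is all of $M_2(\F_q)$ and the unframed ambient functor $\op{Def}_{\bar\rho_{\restriction\ell}}$ itself fails $(H_4)$; representability of $\mathcal C_\ell$ only holds because the specific conditions $\mathcal D_\ell^{\op{nr}},\mathcal D_\ell^{\op{ram}}$ single out a preferred upper-triangular representative, effectively rigidifying the problem. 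That extra rigidification is the missing ingredient: it is not a formal consequence of hypotheses (1)--(3) as stated, and you would need to make it explicit before the injectivity half of (b), and hence the full criterion, goes through.

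Two minor points, neither fatal. First, your finite-dimensionality argument quietly imposes a fixed determinant in order to land in $H^1(\op{G}_\ell,\op{Ad}^0\bar\rho)$; the theorem as phrased places no determinant condition, so you should either add it to the hypotheses (as the paper does elsewhere in this section) or work with $\op{Ad}\bar\rho$, which is still finite. Second, in the surjectivity half of (b) you assert the two representatives glue after conjugation so that they agree on the nose over $R_0$; that step is correct, but it is worth saying out loud since it is the dual of exactly the issue that causes trouble in the injectivity half.
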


 \begin{definition}\label{def functor defn}
     Let $\cC_\ell\subseteq \op{Def}_{\bar{\rho}_{|\ell}}$ be a subfunctor. We say that $\cC_\ell$ is \emph{liftable} if for every small extension $\pi: R\rightarrow S$, the induced map 
     \[\cC_\ell(R)\rightarrow \cC_\ell(S)\] is surjective.
 \end{definition}

\begin{definition}\label{preserves}
    Let $\ell$ be a prime and $(\cC_\ell, \cN_\ell)$ be a pair, where $\cC_\ell$ is a functor of deformations of $\bar{\rho}_{|\ell}$ in the sense of Definition \ref{def functor defn} and $\cN_\ell$ is a subspace of $H^1(\op{G}_\ell, \op{Ad}^0\bar{\rho})$. We say that $\cN_\ell$ preserves $\cC_\ell$ if the following condition holds. Let $\pi: R\rightarrow S$ be a small extension and $t$ be a generator of $\op{ker}\pi$. Let $\varrho\in \cC_\ell(R)$ and $f\in \mathcal{N}_\ell$. Then, the twist
$\varrho':=(\op{Id}+f t) \varrho$ is also contained in $\cC_\ell(R)$. Note that both $\varrho$ and $\varrho'$ reduce to the same deformation in $\cC_\ell(S)$.
\end{definition}
For $\ell\in \Sigma \cup \{\infty\}$, Ramakrishna constructed pairs $(\cC_\ell, \cN_\ell)$ (see \cite{RaviInventMath} and \cite{RamakrishnaFM}). These pairs have the following properties:
\begin{enumerate}
    \item $\cC_\ell$ is liftable in the sense of Definition \ref{def functor defn},
    \item $\cC_p$ consists of ordinary deformations. These consist of local deformations $\varrho: \op{G}_{p}\rightarrow \op{GL}_2(R)$ given by 
\[\varrho=\mtx{\nu\gamma}{\ast}{0}{\gamma^{-1}},\] where $\gamma: \op{G}_p\rightarrow R^\times$ is an unramified character. 
    \item $\cN_\ell$ preserves $\cC_\ell$ in the sense of Definition \ref{preserves}, 
    \item \begin{equation}\label{equation for dimension of Nell}\op{dim} \cN_\ell=\begin{cases}
        \dim H^0(\op{G}_\ell, \op{Ad}^0\bar{\rho})+1 &\text{ if }\ell=p;\\
        \dim H^0(\op{G}_\ell, \op{Ad}^0\bar{\rho}) &\text{ if }\ell\neq p, \infty;\\
        0 &\text{ if }\ell=\infty.
    \end{cases}\end{equation}
\end{enumerate}

This setting is identical to that of Hamblen and Ramakrishna, cf. \cite[Fact 5]{hamblenramakrishna}. We refer to \cite[section 4]{PatrikisExceptional} and \cite[p.~551]{tayloricosahedral} for a detailed exposition on the construction of these pairs, and their properties. In addition to these primes, there are certain auxiliary primes called trivial primes which are allowed to ramify. We recall the definition from \cite[section 5]{hamblenramakrishna} here.
\begin{definition}
    A prime $\ell$ is said to be a \emph{trivial prime} if the following conditions are satisfied:
    \begin{enumerate}
        \item $\ell\equiv 1\mod{p}$ and $\ell\not \equiv 1\mod{p^2}$, 
        \item the restriction of $\bar{\rho}$ to $\op{G}_\ell$ is trivial.
    \end{enumerate}
\end{definition}

\begin{lemma}\label{triv primes positive density}
    The set of trivial primes has positive density.
\end{lemma}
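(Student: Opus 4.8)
The plan is to realise the trivial primes as Frobenius elements lying in a conjugation-stable subset of a single finite Galois group, and then to invoke the Chebotarev density theorem. Let $L:=\bar{\Q}^{\ker\bar{\rho}}$ be the finite Galois extension of $\Q$ cut out by $\bar{\rho}$, so that $\op{Gal}(L/\Q)\cong\op{image}(\bar{\rho})$, and set $M:=L(\mu_{p^2})$. For a prime $\ell$ unramified in $M$ — which forces $\ell\neq p$, since $p$ ramifies in $\Q(\mu_{p^2})$ — condition~(2) in the definition of a trivial prime, namely that $\bar{\rho}_{|\ell}$ be trivial, is equivalent to $\ell$ splitting completely in $L$, i.e. to $\op{Frob}_\ell$ restricting to the identity of $\op{Gal}(L/\Q)$; and condition~(1) is equivalent to $\op{Frob}_\ell$ restricting to an element of order exactly $p$ in $\op{Gal}(\Q(\mu_{p^2})/\Q)\cong(\Z/p^2\Z)^\times$, since the two congruences say precisely that $\ell\bmod p^2$ lies in the unique order-$p$ subgroup and is nontrivial. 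Let $C\subseteq\op{Gal}(M/\Q)$ be the set of elements restricting to the identity on $L$ and to an order-$p$ element on $\Q(\mu_{p^2})$. As $L/\Q$ and $\Q(\mu_{p^2})/\Q$ are Galois and $\op{Gal}(\Q(\mu_{p^2})/\Q)$ is abelian, $C$ is closed under conjugation in $\op{Gal}(M/\Q)$; so once $C\neq\emptyset$ is known, Chebotarev gives that the trivial primes have density at least $|C|/[M:\Q]>0$. Thus everything comes down to proving $C\neq\emptyset$.

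Unwinding, $C\neq\emptyset$ precisely when the subgroup $\op{Gal}(M/L)\cong\op{Gal}(\Q(\mu_{p^2})/(L\cap\Q(\mu_{p^2})))$ of the cyclic group $(\Z/p^2\Z)^\times$ has order divisible by $p$; since $[\Q(\mu_{p^2}):\Q]=p(p-1)$ with $p\nmid p-1$, this holds exactly when $p\nmid[\,L\cap\Q(\mu_{p^2}):\Q\,]$. Because $L\cap\Q(\mu_{p^2})$ is an abelian extension of $\Q$ contained in $L$, the group $\op{Gal}((L\cap\Q(\mu_{p^2}))/\Q)$ is a quotient of $\op{Gal}(L/\Q)^{\op{ab}}\cong\op{image}(\bar{\rho})^{\op{ab}}$, so it suffices to show $\op{image}(\bar{\rho})^{\op{ab}}$ has order prime to $p$. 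This is the one substantive point, and I expect it to be the main obstacle; I would dispatch it by a direct computation. Write $H:=\op{image}(\bar{\rho})$. Since $\bar{\rho}=\mtx{\varphi}{*}{0}{1}$, $H$ is a subgroup of the affine group of matrices $\mtx{a}{b}{0}{1}$ with $a\in\F_q^\times$, $b\in\F_q$; let $H^u\trianglelefteq H$ be the subgroup of those with $a=1$, a $p$-group, and note $H/H^u\cong\op{image}(\varphi)$ has order dividing $q-1$, hence prime to $p$. As $\op{image}(\varphi)$ is abelian, $[H,H]\subseteq H^u$. Conversely, a direct matrix computation gives, for $\mtx{a}{c}{0}{1}\in H$ and $\mtx{1}{b}{0}{1}\in H^u$, the commutator $\mtx{1}{(a-1)b}{0}{1}$; and since $\varphi^2\neq1$ forces $\varphi\neq1$, we may choose $a_0\neq1$ in $\op{image}(\varphi)$, whereupon $b\mapsto(a_0-1)b$ is an injective, hence bijective, self-map of the finite group $H^u$, so $[H,H]\supseteq H^u$. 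Therefore $[H,H]=H^u$ and $\op{image}(\bar{\rho})^{\op{ab}}\cong\op{image}(\varphi)$, which has order prime to $p$.

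Combining the two steps: $p\nmid[\,L\cap\Q(\mu_{p^2}):\Q\,]$, hence $p$ divides $|\op{Gal}(M/L)|$, hence $C\neq\emptyset$, and Chebotarev completes the proof. Apart from the abelianization computation, the argument uses only Chebotarev and the Galois theory of cyclotomic fields; the hypothesis $\varphi\neq1$ (a consequence of $\varphi^2\neq1$ in Theorem~\ref{hamblen ramakrishna thm}) is genuinely needed here, as it is exactly what guarantees that $L\cap\Q(\mu_{p^2})$ has degree prime to $p$ over $\Q$ — without it there need not be any trivial primes at all.
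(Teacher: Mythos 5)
Your proof is correct and follows essentially the same route as the paper: reduce to a Chebotarev condition in the compositum of $\Q(\bar{\rho})$ with $\Q(\mu_{p^2})$, and then show that the condition is nonempty by a group-theoretic fact about $\op{image}(\bar{\rho})$. The paper phrases this last step as ``$\Q(\mu_{p^2})\not\subset\Q(\mu_p,\bar{\rho})$,'' reduces to ``$\Q_1\not\subset\Q(\bar{\rho})$,'' and then asserts that $G=T\ltimes N$ has no quotient $\Z/p\Z$ because $T$ is not normal; you instead show directly that $H^{\op{ab}}\cong\op{image}(\varphi)$ has order prime to $p$ via an explicit commutator computation, and conclude that $L\cap\Q(\mu_{p^2})$ has degree prime to $p$ over $\Q$. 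These are two presentations of the same underlying fact (the commutator computation is precisely what makes ``$T$ not normal $\Rightarrow$ no $\Z/p\Z$-quotient'' go through), and if anything your version spells out the group-theoretic core more carefully than the paper's somewhat terse ``therefore.'' One small remark: you never invoke the indecomposability of $\bar{\rho}$, and indeed it is not needed for this lemma — if the unipotent part $H^u$ were trivial, $H$ would already have prime-to-$p$ order and the conclusion would be immediate — whereas the paper's phrasing implicitly assumes $H^u\neq 1$ so that the semidirect decomposition is nontrivial.
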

\begin{proof}
    We note that a prime $\ell$ is a trivial prime if $\ell$ splits in $\Q(\mu_{p}, \bar{\rho})$ and does not split in $\Q(\mu_{p^2})$. We show that $\Q(\mu_{p^2})$ is not contained in $\Q(\mu_{p}, \bar{\rho})$ and the result is then a consequence of the Chebotarev density theorem. Note that $\Q(\mu_{p^2})$ contains $\Q_1$, the first layer in the cyclotomic $\Z_p$-extension of $\Q$.
    
    \par Suppose by way of contradiction that $\Q(\mu_{p^2})$ is contained in $\Q(\mu_{p}, \bar{\rho})$. Then, it follows that $\Q_1$ is contained in $\Q(\mu_{p}, \bar{\rho})$. We show first that $\Q_1$ is contained in $\Q(\bar{\rho})$. The extension $\Q(\mu_{p}, \bar{\rho})/\Q(\bar{\rho})$ is a prime to $p$ extension, and $\Q_1\cdot \Q(\bar{\rho})$ is an extension of $\Q(\bar{\rho})$ of degree dividing $p$. Since $\Q_1$ is contained in $\Q(\mu_{p}, \bar{\rho})$, it follows that $\Q_1$ is contained in $\Q(\bar{\rho})$. We note that $\bar{\rho}$ is an indecomposable representation of the form $\bar{\rho}=\mtx{\varphi}{\ast}{}{1}$. Let $G$ be the image of $\bar{\rho}$. The map $\sigma\mapsto \bar{\rho}(\sigma)$ identifies $\op{Gal}(\Q(\bar{\rho})/\Q)$ with $G$. We write $G=T\ltimes N$ where $T$ (resp. $N$) consists of diagonal (resp. unipotent) matrices in $G$. Note that $T$ is a prime to $p$ group and $N$ is the Sylow $p$-subgroup. Since $\varphi$ is a non-trivial character, the semidirect product is not a direct product and $T$ is not a normal subgroup. Therefore, $G=\op{Gal}(\Q(\bar{\rho})/\Q)$ does not have any quotient isomorphic to $\Z/p\Z$. This implies that $\Q_1$ is not contained in $\Q(\bar{\rho})$, a contradiction. Hence, we have shown that $\Q(\mu_{p^2})$ is not contained in $\Q(\mu_p, \bar{\rho})$ and from this, the result follows.
\end{proof}
Hamblen and Ramakrishna define two possible choices of $(\cC_\ell, \mathcal{N}_\ell)$ for any trivial prime $\ell$, where $\cC_\ell$ is a functor of deformations of $\bar{\rho}_{|\ell}$ in the sense of Definition \ref{def functor defn} and $\cN_\ell$ is a subspace of $H^1(\op{G}_\ell, \op{Ad}^0\bar{\rho})$. Let $\ell$ be a trivial prime. Note that since $\bar{\rho}_{|\ell}$ is trivial, it follows that any deformation $\varrho\in \op{Def}_{\bar{\rho}_{|\ell}}(R)$ takes values in $\widehat{\op{GL}}_2(R)$, which is a pro-$p$ group. In particular, the image of $\op{I}_\ell$ is a pro-$p$ group. Since $\ell\neq p$, it follows that $\varrho$ is tamely ramified. Let $\sigma_\ell$ be a Frobenius at $\ell$ and denote by $\tau_\ell$ a generator of the pro-$p$ quotient of the tame inertia group of $\Q_\ell$. The pro-$p$ completion of $\op{G}_\ell$ is generated by $\sigma_\ell$ and $\tau_\ell$ and is subject to the single relation $\sigma_\ell \tau_\ell \sigma_\ell^{-1}=\tau_\ell^\ell$. Thus, prescribing a deformation $\varrho$ is equivalent to choosing two matrices 
\[\varrho(\sigma_\ell), \varrho(\tau_\ell)\in \widehat{\op{GL}}_2(R)\] subject to the relation 
\[\varrho(\sigma_\ell) \varrho(\tau_\ell) \varrho(\sigma_\ell)^{-1}=\varrho(\tau_\ell)^\ell.\]
Choose a square root of $\ell$ in $\Z_p$ which satisfies the congruence $\ell^{1/2}\equiv 1\mod{p}$. For $R\in \mathcal{C}_{\cO}$, let $\mathcal{D}_\ell(R)\subset \op{Def}_{\bar{\rho}_{|\ell}}(R)$ consist of deformations with a representative $\varrho:\op{G}_{\ell}\rightarrow \op{GL}_2(R)$ satisfying:
   \begin{equation}
       \varrho(\sigma_\ell)=\ell^{\frac{k-2}{2}}\mtx{\ell}{x}{0}{1}\text{ and } \varrho(\tau_\ell)=\mtx{1}{y}{0}{1},
   \end{equation}
   for $x,y\in R$. Since $\bar{\varrho}$ is the trivial representation of $\op{G}_\ell$, we find that $x$ and $y$ are elements in the maximal ideal of $R$. Hamblen and Ramakrishna define classes in $H^1(\op{G}_{\ell}, \op{Ad}^0\bar{\rho})$ as follows:
\[
\begin{split}
f_1(\sigma_\ell)=\mtx{0}{1}{0}{0} \text{, } &f_1(\tau_\ell)=\mtx{0}{0}{0}{0},\\ 
f_2(\sigma_\ell)=\mtx{0}{0}{0}{0} \text{, } & f_2(\tau_\ell)=\mtx{0}{1}{0}{0},\\  g^{\operatorname{nr}}(\sigma_\ell)=\mtx{0}{0}{1}{0} \text{, } & g^{\operatorname{nr}}(\tau_\ell)=\mtx{0}{0}{0}{0},\\  g^{\operatorname{ram}}(\sigma_\ell)=\mtx{0}{0}{1}{0} \text{, } & g^{\operatorname{ram}}(\tau_\ell)=\mtx{-\frac{y}{\ell-1}}{0}{0}{\frac{y}{\ell-1}}.
\end{split}\]
\par The space $\mathcal{Q}_\ell$ denotes the subspace of $H^1(\op{G}_{\ell}, \op{Ad}^0\bar{\rho})$ spanned by $\{f_1,f_2\}$, let $\mathcal{P}_\ell^{\operatorname{nr}}$ the subspace spanned by $\{f_1,f_2,g^{\operatorname{nr}}\}$ and $\mathcal{P}_\ell^{\operatorname{ram}}$ the subspace spanned by $\{f_1,f_2,g^{\operatorname{ram}}\}$. The functor $\mathcal{D}_\ell^{\op{nr}}$ (resp. $\mathcal{D}_\ell^{\op{ram}}$) was introduced in \cite[Proposition 24]{hamblenramakrishna} (resp. \cite[Proposition 28]{hamblenramakrishna}). The deformation functor $\mathcal{D}_\ell^{\op{nr}}$ consists of deformations satisfying $\mathcal{D}_\ell$ for which $p^2|x,y$. On the other hand, $\mathcal{D}_\ell^{\op{ram}}$ consists of deformations satisfying $\mathcal{D}_\ell$ for which $p^2|x$ and $p||y$. 
\begin{definition}\label{trivial local def} We introduce two different deformation conditions for $\rho_{|\ell}$ for a trivial prime $\ell$:
    \begin{description}
        \item[Type I] $(\cC_\ell, \mathcal{N}_\ell)$ is the conjugate of $(\mathcal{D}_\ell^{\op{nr}}, \mathcal{P}_\ell^{\op{nr}})$ by $\mtx{1}{0}{1}{1}$.
        \item[Type II] $(\cC_\ell, \mathcal{N}_\ell)$ is the conjugate of $(\mathcal{D}_\ell^{\op{ram}}, \mathcal{P}_\ell^{\op{ram}})$ by $\mtx{0}{1}{1}{0}$.
    \end{description} 
\end{definition}\noindent Note here that if $A\in \op{GL}_2(\cO)$ is an invertible matrix, and $\varrho$ is a deformation of $\bar{\rho}_{\restriction \ell}$, then so is the conjugate $A \varrho A^{-1}$. This is because $\bar{\rho}_{\restriction \ell}$ is the trivial representation. Mod $p^2$ deformations of $\bar{\rho}_{|\ell}$ are unramified (resp. ramified) if they are of Type \rm{I} (resp. Type \rm{II}) since $p^2|y$ (resp. $p||y$).

\begin{proposition}\label{Nl preserves Cl for mod p^3}
    If $\ell$ is a trivial prime, then $\mathcal{N}_\ell$ preserves $\cC_\ell$ for the small extension $\cO/p^n\rightarrow \cO/p^{n-1}$ for $n\geq 3$. 
\end{proposition}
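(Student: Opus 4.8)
The plan is to reduce the statement to an explicit matrix computation at $\ell$. Since $\bar{\rho}_{|\ell}$ is the trivial representation, for any fixed $A_0\in \op{GL}_2(\cO)$ the operation $\varrho\mapsto A_0\varrho A_0^{-1}$ is a bijection on deformations which intertwines the reduction maps and the twisting action (twisting by $f$ turns into twisting by $A_0 f A_0^{-1}$). Hence, by Definition \ref{trivial local def}, it suffices to show that $\mathcal{P}_\ell^{\op{nr}}$ preserves $\mathcal{D}_\ell^{\op{nr}}$ and that $\mathcal{P}_\ell^{\op{ram}}$ preserves $\mathcal{D}_\ell^{\op{ram}}$, for the small extension $\pi: R:=\cO/p^n\rightarrow S:=\cO/p^{n-1}$ with $n\geq 3$. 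Throughout, set $t:=p^{n-1}$, a generator of $\ker \pi$; note $t\mathfrak{m}_R=0$, $t^2=0$ in $R$, and, crucially, $p^2\mid t$ because $n\geq 3$. Write $\ell-1=pu_0$ with $u_0\in \Z_p^\times$, which is legitimate since $\ell$ is a trivial prime, so $\ell\not\equiv 1\bmod{p^2}$.

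Given $\varrho\in \cC_\ell(R)$, I would take a representative with $\varrho(\sigma_\ell)=\ell^{\frac{k-2}{2}}\mtx{\ell}{x}{0}{1}$ and $\varrho(\tau_\ell)=\mtx{1}{y}{0}{1}$ of the relevant type, together with a class $f=af_1+bf_2+cg\in \mathcal{N}_\ell$, where $g=g^{\op{nr}}$ or $g=g^{\op{ram}}$. As recalled in Section \ref{s 4}, the twist $\varrho':=(\op{Id}+tf)\varrho$ is again a deformation with determinant $\nu$ reducing to the same element of $\cC_\ell(S)$. Computing it entrywise and using $p^2\mid x$ and $p^2\mid t$ to kill products such as $tx$ and $tcx$, one finds
\[\varrho'(\sigma_\ell)=\ell^{\frac{k-2}{2}}\mtx{\ell}{x+ta}{tc\ell}{1},\qquad \varrho'(\tau_\ell)=\mtx{1-\epsilon}{y+tb}{0}{1+\epsilon},\]
where $\epsilon=0$ in the unramified case and $\epsilon=tc\cdot\overline{y/(\ell-1)}\in p^{n-1}R$ in the ramified case. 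This is not yet of the required shape: the entry $tc\ell$ is nonzero, and in the ramified case $\varrho'(\tau_\ell)$ is no longer unipotent. I would then conjugate $\varrho'$ by $A=\mtx{1}{0}{w}{1}$, where $w\equiv -p^{n-2}c\ell/u_0\pmod{p^{n-1}}$. Since $n\geq 3$ one has $v_p(w)=n-2\geq 1$, so $A\in \widehat{\op{GL}_2}(R)$ and $A\varrho'A^{-1}$ is a legitimate representative of the same deformation. A direct valuation check — $p^2\mid x+ta$, $v_p(w^2 y)\geq n$, $v_p(w\epsilon)\geq n$, $v_p(w(x+ta))\geq n$ — shows that $A\varrho'(\sigma_\ell)A^{-1}=\ell^{\frac{k-2}{2}}\mtx{\ell}{x+ta}{0}{1}$ and $A\varrho'(\tau_\ell)A^{-1}=\mtx{1}{y+tb}{0}{1}$. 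Finally, $p^2\mid ta$ and $p^2\mid tb$ (again because $n\geq 3$), so the new parameters $x+ta$ and $y+tb$ satisfy $p^2\mid x+ta$ in both cases, $p^2\mid y+tb$ in the unramified case, and $v_p(y+tb)=1$ in the ramified case (as $v_p(tb)=n-1>1=v_p(y)$). Hence $A\varrho'A^{-1}$, and therefore $\varrho'$, lies in $\cC_\ell(R)$.

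The subtle point — and the place where $n\geq 3$ is genuinely needed — is the ramified case. There the single parameter $w$ must simultaneously clear the lower-left entry of $\varrho'(\sigma_\ell)$, which forces $w\equiv -p^{n-2}c\ell/u_0\pmod{p^{n-1}}$, and restore the unipotence of $\varrho'(\tau_\ell)$, which forces $\epsilon-w(y+tb)\equiv 0\pmod{p^n}$. These two demands are compatible precisely because the cocycle $g^{\op{ram}}$ carries the factor $y/(\ell-1)$: writing $y=p\tilde{y}$ one has $\overline{y/(\ell-1)}\equiv \tilde{y}/u_0\pmod p$ and $\ell\equiv 1\pmod p$, so both conditions reduce modulo $p$ to the same congruence on $w$. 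For $n=2$ the required $w$ would instead be a unit, so the conjugation would not be inner to $\widehat{\op{GL}_2}(R)$; moreover twisting by $f_1$ would already destroy the condition $p^2\mid x$. Thus the statement genuinely fails for $\cO/p^2\rightarrow \cO/p$, which explains the hypothesis $n\geq 3$. The remaining work is the routine $p$-adic bookkeeping indicated above, carried out separately for the two types.
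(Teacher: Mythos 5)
Your proof is correct. The paper's ``proof'' of this proposition is the one-line citation to Corollaries 25 and 29 of \cite{hamblenramakrishna}, so you are reconstructing from scratch the explicit matrix computation that those corollaries encode, and the reconstruction is sound. The reduction to the unconjugated pairs $(\mathcal{D}_\ell^{\op{nr}},\mathcal{P}_\ell^{\op{nr}})$ and $(\mathcal{D}_\ell^{\op{ram}},\mathcal{P}_\ell^{\op{ram}})$ is legitimate because $\bar\rho_{|\ell}$ is trivial, so a fixed conjugation in $\op{GL}_2(\cO)$ commutes with reduction and with twisting. Your bookkeeping is right: with $t=p^{n-1}$ and $n\geq 3$ one has $p^2\mid t$, which kills $tcx$, preserves $p^2\mid x+ta$, and makes $v_p(tb)\geq 2$, so the $x$-divisibility condition persists in both cases and the $y$-divisibility persists in the unramified (resp.\ ramified) case because $v_p(tb)\geq 2$ (resp.\ $v_p(tb)>1=v_p(y)$). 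The conjugating matrix $A=\mtx{1}{0}{w}{1}$ has $v_p(w)=n-2\geq 1$ and hence lies in $\widehat{\op{GL}_2}(R)$, and your valuation estimates $v_p(w^2(y+tb)),v_p(w\epsilon),v_p(w(x+ta))\geq n$ all hold when $n\geq 3$. The crucial point in the ramified case is exactly the one you isolate: the single $w$ must both clear the lower-left entry of $\varrho'(\sigma_\ell)$ and restore unipotence of $\varrho'(\tau_\ell)$, and after cancelling $p^{n-1}$ both constraints reduce mod $p$ to the same congruence because $\ell\equiv 1\pmod p$ and $\ell-1=pu_0$ --- this is precisely what the $y/(\ell-1)$ factor in $g^{\op{ram}}$ is engineered for. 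Your explanation of why the argument fails for $n=2$ (the required conjugation would not be in $\widehat{\op{GL}_2}(R)$, and twisting by $f_1$ already destroys $p^2\mid x$) is also correct, and it matches the paper's remark that the KLR method is needed to produce $\rho_2$. The only nitpick is notational: $\epsilon=tc\cdot\overline{y/(\ell-1)}$ should be read as $t$ times a lift of $c\,\overline{y/(\ell-1)}$ to $R$, which is unambiguous precisely because $t\mathfrak m_R=0$; you use this implicitly and could state it.
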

\begin{proof}
    \cite[Corollary 25 and Corollary 29]{hamblenramakrishna}.
\end{proof}

\par The lifting argument consists of 2 steps.
\begin{itemize}
    \item First, it is shown that there is a finite set of primes $X$ containing $\Sigma$, such that $\bar{\rho}$ lifts to a representation $\rho_3: \op{G}_{X}\rightarrow \op{GL}_2(\cO/p^3)$. Here $X\setminus \Sigma$ consists only of trivial primes. Moreover, $\rho_3$ is shown to satisfy certain local conditions $\cC_\ell$ for all primes $\ell\in X$. For each prime $\ell\in \Sigma$, the pair $(\cC_\ell, \mathcal{N}_\ell)$ is the one fixed earlier in this section satisfying properties (1)--(4) above. For the primes $\ell\in X\setminus \Sigma$, the pair $(\cC_\ell, \mathcal{N}_\ell)$ is one of the two choices from Definition \ref{trivial local def}. 
    \item The representation $\rho_3$ is shown to lift to a compatible family
    \[\rho_n: \op{G}_X\rightarrow \op{GL}_2(\cO/p^n),\] satisfying $\cC_\ell$ for all primes $\ell\in X$. Given $\rho_n$ satisfying these conditions for $n\geq 3$, Hamblen and Ramakrishna show that a lift $\rho_{n+1}$ exists. This argument relies on showing that the natural map 
    \[H^1(\op{G}_X, \op{Ad}^0\bar{\rho})\longrightarrow \bigoplus_{\ell\in X} \left(\frac{H^1(\op{G}_\ell, \op{Ad}^0\bar{\rho})}{\mathcal{N}_\ell}\right)\] is an isomorphism. The kernel and cokernel of this map are Selmer and dual Selmer groups, which are shown to vanish for an appropriate choice of primes $X$. The formulation of this part of the argument is due to Taylor \cite{tayloricosahedral}. 
\end{itemize}

Let us now describe these steps in further detail. Let $U_1, U_2, U_3$ be the submodules of $\op{Ad}^0\bar{\rho}$ consisting of matrices $\mtx{}{b}{}{}$, $\mtx{a}{b}{}{-a}$ and $\mtx{a}{b}{c}{-a}$ respectively. Note that $U_3=\op{Ad}^0\bar{\rho}$. Hamblen and Ramakrishna \cite[Proposition 13]{hamblenramakrishna} show that there is a finite set of primes $T$ containing $\Sigma$ such that 
\begin{itemize}
    \item $T\backslash \Sigma$ consists only of trivial primes,
    \item $\Sh_T^2(M)=0$ for $M=U_i$ for $i=1,2,3$.
\end{itemize}
The obstruction to lifting $\bar{\rho}$ to a reducible representation $\xi_2:\op{G}_{T}\rightarrow \op{GL}_2(\cO/p^2)$ lies in $\Sh_T^2(U_1)$. This group is $0$, hence a reducible deformation $\xi_2$ exists. It is then shown that there is a set of trivial primes $Z$ disjoint from $T$ and a global cohomology class $z\in H^1(\op{G}_{Z\cup T}, \op{Ad}^0\bar{\rho})$ such that the twist $\rho_2:=(\op{Id}+pz)\xi_2$ satisfies the conditions $\cC_\ell$ for all primes $\ell\in Z\cup T$. For $\ell\in T\setminus \Sigma$, $\cC_\ell$ is Type I and for $\ell\in Z$, $\cC_\ell$ is Type II. This step involves the method of Khare, Larsen and Ramakrishna \cite{KLR}. Set $X_1:=Z\cup T$. Let \[P_2:=\op{ker}\left\{ \op{SL}_2(\cO/p^2)\rightarrow \op{SL}_2(\cO/p)\right\}\] be the principal congruence subgroup of $\op{SL}_2(\cO/p^2)$ and $D_2$ consist of scalar matrices
\[D_2:=\left\{ \mtx{1+pd}{0}{0}{1+pd}\in \op{GL}_2(\cO/p^2)\mid d\in \cO/p\right\}.\] Let $B_2\subset \op{GL}_2(\cO/p^2)$ be the Borel subgroup of upper triangular matrices $\mtx{\ast}{\ast}{0}{\ast}$.

\begin{lemma}\label{new lemma 1}
    With respect to notation above, let $G_2\subseteq \op{GL}_2(\Z/p^2\Z)$ be the image of $\rho_2$. The following assertions hold:
    \begin{enumerate}
        \item $D_2\cdot G_2$ contains $P_2$.
        \item The representation $\rho_2$ is irreducible, i.e., $G_2$ is not contained in a conjugate of $B_2$.
    \end{enumerate}
\end{lemma}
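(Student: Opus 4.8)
The two assertions concern the reduction $\rho_2 = (\op{Id} + pz)\xi_2$ of the constructed lift, where $\xi_2$ is reducible and $z$ is the twisting cocycle. The plan is to analyze the image $G_2$ by using the structure of $\rho_2$ modulo $p$ (which agrees with $\bar\rho$) together with the trivial primes that have been added to make $\rho_2$ ramified.

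\textbf{Part (1).} First I would observe that $G_2 \pmod p$ is the image of $\bar\rho$, which is of the form $\mtx{\varphi}{\ast}{0}{1}$ and is indecomposable, so it contains nontrivial upper-triangular unipotent elements. More importantly, I would use that $\rho_2$ is \emph{ramified} at the Type II trivial primes $\ell \in Z$. By Definition \ref{trivial local def} (Type II), after conjugation the local deformation at such $\ell$ has $\varrho(\tau_\ell)$ congruent mod $p^2$ to a matrix with $p \| y$ in the lower-left entry, so $\rho_2(\tau_\ell)$ is a nontrivial element of $P_2$ (the principal congruence subgroup) that is lower-triangular unipotent mod $p$. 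Combining an upper-triangular and a lower-triangular unipotent generator of the relevant shape, together with the diagonal torus $D_2$ of scalars, I would show that $D_2 \cdot G_2$ contains enough one-parameter subgroups of $P_2$ to generate all of $P_2$: concretely, $P_2 \cong \op{Ad}^0\bar\rho$ as a group (via $\op{Id} + pA \mapsto A$), and the commutators and conjugates of the unipotent pieces produced above span $\op{Ad}^0\bar\rho = U_3$. The role of $D_2$ is to supply the diagonal (trace-zero) direction $U_2/U_1$ that a purely upper-triangular $\xi_2$ might not reach on its own; the ramified lower-left contribution from $Z$ supplies the $U_3/U_2$ direction. I expect the cleanest formulation is: the mod $p$ reduction gives $G_2 \twoheadrightarrow \op{image}(\bar\rho)$ with kernel $G_2 \cap P_2$, and one checks $G_2 \cap P_2$, enlarged by $D_2$, is all of $P_2$ by exhibiting the three types of elementary matrices $\op{Id} + p E$ for $E \in \{$upper nilpotent, diagonal trace-zero, lower nilpotent$\}$ in $D_2 \cdot G_2$.

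\textbf{Part (2).} For irreducibility of $\rho_2$, suppose for contradiction that $G_2$ lies in a conjugate of the Borel $B_2$. Reducing mod $p$, $\op{image}(\bar\rho)$ would lie in the corresponding Borel of $\op{GL}_2(\F_q)$; since $\bar\rho$ is indecomposable this pins down the flag mod $p$ to be the one stabilized by $\bar\rho$ (the line on which $\varphi$ acts). So the conjugating matrix may be taken in $P_2$, and we would need $G_2 \subseteq B_2$ outright (with the standard flag). But $\rho_2$ is ramified at the Type II primes $\ell \in Z$ with the ramified part supported in the \emph{lower-left} entry (after the conjugation by $\mtx{0}{1}{1}{0}$ in Type II, the relevant cocycle $g^{\op{ram}}$ has nonzero lower-left entry), so $\rho_2(\tau_\ell)$ has a nonzero lower-left entry, contradicting $G_2 \subseteq B_2$. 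Alternatively, and perhaps more robustly, one can invoke part (1): if $G_2 \subseteq B_2$ then $D_2 \cdot G_2 \subseteq B_2 \cdot D_2 = B_2$, which cannot contain $P_2$ since $P_2$ has lower-triangular unipotent elements not in $B_2$; this contradicts part (1).

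\textbf{Main obstacle.} The delicate point is Part (1): verifying that the \emph{specific} cocycle $z$ and the \emph{specific} local behaviors at the added trivial primes genuinely force the diagonal and lower-triangular directions into $D_2 \cdot G_2$, rather than merely into some proper submodule of $\op{Ad}^0\bar\rho$. This requires tracking how $\rho_2 = (\op{Id}+pz)\xi_2$ was built in the Khare--Larsen--Ramakrishna step — in particular that the twist $z$ was chosen precisely so that $\rho_2$ satisfies the ramified condition $\cC_\ell$ of Type II at the primes in $Z$, which is what injects the $U_3/U_2$ direction. I would isolate this as the crux and handle the upper-triangular and scalar directions (which come for free from $\bar\rho$ being indecomposable and from $D_2$) quickly, spending the bulk of the argument on extracting a genuine lower-left unipotent element from ramification at $Z$.
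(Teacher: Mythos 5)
Your Part (2) argument (via Part (1), using that $D_2$ is central so $gD_2g^{-1}=D_2$) is exactly the paper's, and your identification of the crux — that the Type II condition at the primes $\ell \in Z$ forces $\rho_2(\tau_\ell) = \mtx{1}{0}{y}{1}$ with $p\,\|\,y$, hence a genuine lower-left contribution in $P_2$ — is correct. But your Part (1) has two problems.

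First, a conceptual error about $D_2$: you write that ``the role of $D_2$ is to supply the diagonal (trace-zero) direction $U_2/U_1$,'' but $D_2$ consists of \emph{scalar} matrices $\mtx{1+pd}{0}{0}{1+pd}$, which project to $0$ in $\op{Ad}^0\bar\rho$ under the decomposition $\op{Ad}\bar\rho = \F_q\cdot\op{Id}\oplus\op{Ad}^0\bar\rho$. It cannot produce any trace-zero direction. Its actual role is to absorb the scalar component: since $\det\rho_2 = \nu$ is not trivial mod $p^2$, for $\sigma$ in $\mathcal{K}:=\op{Gal}(\Q(\rho_2)/\Q(\bar\rho))$ the element $\iota(\sigma)$ (defined by $\rho_2(\sigma)=\op{Id}+p\,\iota(\sigma)$) has a nonzero scalar part; multiplying by an element of $D_2$ kills that scalar part, so $D_2\cdot G_2 \supseteq P_2$ reduces exactly to showing $\pi(\mathcal{N})=\op{Ad}^0\bar\rho$ for $\mathcal{N}=\iota(\mathcal{K})$.

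Second, and more seriously, your plan to ``exhibit the three types of elementary matrices $\op{Id}+pE$'' is missing the tool that makes this tractable. The paper does \emph{not} produce the upper-nilpotent and diagonal-trace-zero directions by hand. It observes that $\iota$ is equivariant for the adjoint action ($\iota(g\sigma g^{-1}) = \bar\rho(g)\iota(\sigma)\bar\rho(g)^{-1}$, since $\mathcal{K}$ is normal in $\op{Gal}(\Q(\rho_2)/\Q)$), so $\pi(\mathcal{N})$ is a $\op{G}_\Q$-\emph{submodule} of $\op{Ad}^0\bar\rho$. Under the hypotheses on $\varphi$, every proper Galois submodule of $\op{Ad}^0\bar\rho$ lies in $U_2$ (upper triangular trace-zero). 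Therefore containing a single lower-left element forces $\pi(\mathcal{N})=\op{Ad}^0\bar\rho$, with no need to exhibit the other two directions at all. Without this equivariance argument it is not clear how you would extract the remaining directions: the Type I primes contribute nothing mod $p^2$ to the inertia image, and ``$\bar\rho$ indecomposable'' only gives information mod $p$, not about $G_2\cap P_2$.
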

\begin{proof}
    The proof is similar to \cite[Proposition 42]{hamblenramakrishna}, with the key difference arising from our convention that $\op{det}\rho_2$ need not be the trivial character mod $p^2$. Let $\mathcal{K}$ denote the Galois group $\op{Gal}(\Q(\rho_2)/\Q(\bar{\rho}))$. For $\sigma\in \mathcal{K}$, we have $\bar{\rho}(\sigma)=\op{Id}$ and we write $\rho_2(\sigma)=\op{Id}+p\mtx{a'}{b'}{c'}{d'}$ where $\mtx{a'}{b'}{c'}{d'}\in \op{Ad}\bar{\rho}$. Set $\iota(\sigma):=\mtx{a'}{b'}{c'}{d'}$ so that $\rho_2(\sigma)=\op{Id}+p\iota(\sigma)$. We thus have an injection $\iota : \mathcal{K}\hookrightarrow \op{Ad}\bar{\rho}$. Let $\mathcal{N}\subseteq \op{Ad}\bar{\rho}$ be the image of $\iota$. Note that $\op{Ad}\bar{\rho}$ decomposes as: \[\op{Ad}\bar{\rho}=\F_q\cdot \op{Id}\oplus \op{Ad}^0\bar{\rho},\]
    and this involves writing 
    \[\mtx{a'}{b'}{c'}{d'}=d\op{Id}+\mtx{a}{b}{c}{-a},\] where $a=\frac{a'-d'}{2}$, $b=b'$, $c=c'$ and $d=\frac{a'+d'}{2}$.
    Letting $\pi: \op{Ad}\bar{\rho}\rightarrow \op{Ad}^0\bar{\rho}$ denote the projection map, we claim that $\pi(\mathcal{N})=\op{Ad}^0\bar{\rho}$.
    
    \par Assuming the claim, we prove part (1). Given $A\in P_2$, we show that there exists $D\in D_2$ and $\sigma\in \mathcal{K}$ such that $A=D\rho_2(\sigma)$. We write $A=\op{Id}+p\mtx{a}{b}{c}{-a}$. Since $\pi(\mathcal{N})=\op{Ad}^0\bar{\rho}$, we find that there exists $\sigma\in \mathcal{K}$ such that $\pi_2(\iota(\sigma))=\mtx{a}{b}{c}{-a}$. Writing $\iota(\sigma)=d\op{Id}+\mtx{a}{b}{c}{-a}$, we find that
    \[\rho_2(\sigma)=\op{Id}+p\left(d\op{Id}+\mtx{a}{b}{c}{-a}\right)=\left(\op{Id}+p\mtx{d}{}{}{d}\right)\left(\op{Id}+p\mtx{a}{b}{c}{-a}\right).\]
    Thus we have that $A=D\rho_2(\sigma)$ where $D=\op{Id}+p\mtx{-d}{}{}{-d}$. We deduce that $P_2\subseteq D_2\cdot G_2$.
    \par We prove the claim. Let $\ell\in Z$, recall that in this case $\rho_{2|\ell}$ satisfies $\cC_\ell$ of Type II. Then, we have that $\rho_2(\tau_\ell)=\mtx{1}{0}{y}{1}$ where $p||y$. Write $y=py'$ where $y'\in \cO/p$ is non-zero. Note that $\iota(\tau_\ell)=\mtx{0}{0}{y'}{0}$, thus, $\mathcal{N}$ contains $\mtx{0}{0}{y'}{0}$. This implies that $\pi(\mathcal{N})$ contains $\mtx{0}{0}{y'}{0}$. Any proper Galois submodule of $\op{Ad}^0\bar{\rho}$ consists of upper triangular matrices $\mtx{a}{b}{0}{-a}$ and thus $\pi(\mathcal{N})$ is all of $\op{Ad}^0\bar{\rho}$ which proves the claim. 
    \par Next we show that (2) follows from (1). Suppose by way of contradiction that there exists $g\in \op{GL}_2(\cO/p^2)$ such that $G_2\subseteq gB_2g^{-1}$. Note that $D_2$ consists of scalar matrices, we have that $g D_2 g^{-1}=D_2$ and thus $D_2\cdot G_2\subseteq g B_2 g^{-1}$. From part (1), we deduce that $P_2$ is contained in $g B_2 g^{-1}$ and therefore, $g^{-1} P_2 g \subseteq B_2$. Since $P_2=g^{-1} P_2 g$ we find that $P_2\subseteq B_2$, which is a contradiction. Therefore, we have shown that $\rho_2$ is irreducible.
\end{proof}
We remark that since $\rho_2$ is irreducible, any characteristic zero lift $\rho$ of $\rho_2$ will be irreducible.

\par Consider the obstruction class $\mathcal{O}(\rho_2)\in H^2(\op{G}_{X_1}, \op{Ad}^0\bar{\rho})$ for lifting $\rho_2$ to a representation $\xi_3: \op{G}_{X_1}\rightarrow \op{GL}_2(\cO/p^3)$. Since the deformation conditions $\cC_\ell$ are all liftable, this obstruction class lies in $\Sh^2_{X_1}(\op{Ad}^0\bar{\rho})$. Recall that $\Sh^2_T(\op{Ad}^0\bar{\rho})=0$. Since $X_1$ contains $T$, we find that $\Sh^2_{X_1}(\op{Ad}^0\bar{\rho})=0$ as well, and thus in particular, $\mathcal{O}(\rho_2)=0$. Thus, $\rho_2$ lifts to $\xi_3:\op{G}_{X_1}\rightarrow \op{GL}_2(\cO/p^3)$. 
\begin{proposition}\label{inf many triv primes f psi conditions}
    Let $Y$ be a finite set of primes containing $\Sigma$ and suppose that there exist non-zero classes $f\in H^1(\op{G}_Y, \op{Ad}^0\bar{\rho})$ and $\psi\in H^1(\op{G}_Y, \op{Ad}^0\bar{\rho}^*)$. Let $\beta: \op{G}_Y\rightarrow \F_q^\times$ be a character. Then there is an infinite set of trivial primes $\ell\notin Y$ such that 
\begin{enumerate}
    \item $\rho_{2|\ell}$ satisfies the Type I condition $\cC_
    \ell$,
    \item $\ell\equiv 1+2p \pmod{p^2}$,
    \item $f(\sigma_\ell)\neq \mtx{-a}{a}{b}{a}$ for any values of $a,b\in \F_q$, 
    \item $\gamma_{|\ell}=0$ for all $\gamma\in H^1(\op{G}_Y, U_i^*)$ for $i=1,2$,
    \item $\psi_{|\ell}\neq 0$ and $\psi$ can be extended to a basis of $H^1(\op{G}_Y, \op{Ad}^0\bar{\rho}^*)$ such that $\alpha_{|\ell}=0$ for all other elements $\alpha$ of the basis,
    \item $\beta_{|\ell}=1$.
\end{enumerate}
\end{proposition}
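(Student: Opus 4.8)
The plan is to deduce the existence of infinitely many such primes from the Chebotarev density theorem applied to a single auxiliary Galois extension $L/\Q$ that simultaneously records all six conditions; this is the usual mechanism of Ramakrishna and Hamblen--Ramakrishna for producing auxiliary ramified primes, and the argument here is a variant of the constructions in \cite[\S5]{hamblenramakrishna}.

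First I would fix the field $L$: take it to be the compositum of $\Q(\rho_2,\mu_{p^2})$ with the finite abelian extensions of $\Q(\bar{\rho},\mu_p)$ cut out by the homomorphisms obtained by restricting to $\op{G}_{\Q(\bar{\rho},\mu_p)}$ the cocycles $f$, the chosen classes $\gamma\in H^1(\op{G}_Y,U_i^*)$ for $i=1,2$, a basis $\psi=\alpha_1,\dots,\alpha_m$ of $H^1(\op{G}_Y,\op{Ad}^0\bar{\rho}^*)$, and $\beta$; these restrictions are genuine homomorphisms because $\op{G}_{\Q(\bar{\rho},\mu_p)}$ acts trivially on $\op{Ad}^0\bar{\rho}$, on $\op{Ad}^0\bar{\rho}^*$ and on $U_i^*$. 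Then $L/\Q$ is finite Galois, and the Frobenius conjugacy class of an unramified prime $\ell$ in $\op{Gal}(L/\Q)$ determines whether $\ell$ is a trivial prime, the class of $\ell\bmod p^2$, the $\op{GL}_2(\cO/p^2)$-conjugacy class of $\rho_2(\sigma_\ell)$ (hence whether $\rho_{2\restriction\ell}$ satisfies the Type I condition $\cC_\ell$), and the values $f(\sigma_\ell),\gamma(\sigma_\ell),\alpha_t(\sigma_\ell),\beta(\sigma_\ell)$ (the first three up to the action of $G:=\op{im}\bar{\rho}$). So it suffices to exhibit a single element $\sigma\in\op{Gal}(L/\Q)$ realizing (1)--(6), together with the remark that the set of such $\sigma$ is stable under conjugation, so that Chebotarev applies.

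To produce $\sigma$, I would first choose $\sigma\restriction_{\Q(\rho_2,\mu_{p^2})}$ so that $\bar{\rho}(\sigma)=\op{Id}$, $\ell\equiv 1+2p\pmod{p^2}$ (which is $\not\equiv1\pmod{p^2}$ since $p\geq5$, so this gives (2) and the trivial-prime condition), and $\rho_2(\sigma)$ is $\op{GL}_2(\cO/p^2)$-conjugate to $\ell^{(k-2)/2}\mtx{\ell}{0}{0}{1}$, which is precisely condition (1) that the automatically unramified $\rho_{2\restriction\ell}$ lies in $\cC_\ell$ of Type I. Such an element exists because by Lemma \ref{new lemma 1}(1) the group $D_2\cdot\op{im}\rho_2$ contains the principal congruence subgroup $P_2$, so $\op{im}\rho_2$ is large enough. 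It then remains to extend this restriction over $L/\Q(\rho_2,\mu_{p^2})$ so that in addition $f(\sigma)\ne\mtx{-a}{a}{b}{a}$ for all $a,b\in\F_q$, each $\gamma(\sigma)=0$, $\psi(\sigma)\ne0$ with $\alpha_t(\sigma)=0$ for $t\geq2$, and $\beta(\sigma)=1$.

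The main obstacle, and the only substantive point, is the joint independence of these remaining conditions: one must show that $\Q(\rho_2,\mu_{p^2})$ and the various cocycle-cut-out extensions are sufficiently linearly disjoint over $\Q(\bar{\rho},\mu_p)$ that the displayed values can be prescribed independently on the relevant coset of $\op{Gal}(L/\Q(\rho_2,\mu_{p^2}))$. I would obtain this from the cohomological input underlying \cite[\S5]{hamblenramakrishna}: the hypotheses that $\bar{\rho}$ is indecomposable, $\varphi^2\neq1$, and $\varphi\notin\{\bar{\chi},\bar{\chi}^{-1}\}$ force $H^1\big(\op{Gal}(\Q(\bar{\rho},\mu_p)/\Q),M\big)=0$ for $M\in\{\op{Ad}^0\bar{\rho},\op{Ad}^0\bar{\rho}^*,U_i^*\}$, so restriction to $\op{G}_{\Q(\bar{\rho},\mu_p)}$ is injective on the relevant $H^1$'s and these classes remain linearly independent as homomorphisms there; together with the largeness of $\op{im}\rho_2$ from Lemma \ref{new lemma 1} this yields the disjointness. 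Granting it, the conditions $\gamma(\sigma)=0$, $\alpha_t(\sigma)=0$, $\beta(\sigma)=1$ are imposed componentwise; $\psi(\sigma)\ne0$ is possible since $\psi\restriction_{\op{G}_{\Q(\bar{\rho},\mu_p)}}\ne0$; and for $f(\sigma)$ one uses that the image of $f\restriction_{\op{G}_{\Q(\bar{\rho},\mu_p)}}$ is a nonzero $G$-submodule of $\op{Ad}^0\bar{\rho}$ (by the cocycle identity and normality), whereas, since $\bar{\rho}$ has upper-triangular image with $\varphi\neq1$, no nonzero $G$-submodule of $\op{Ad}^0\bar{\rho}$ is contained in any $G$-translate of the locus $\{\mtx{-a}{a}{b}{a}\}$; hence $f(\sigma)$, and indeed its entire $G$-orbit, can be chosen off that locus. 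This makes the set of admissible $\sigma$ a nonempty union of conjugacy classes in $\op{Gal}(L/\Q)$, and the Chebotarev density theorem then produces an infinite set of trivial primes $\ell\notin Y$ with the required properties.
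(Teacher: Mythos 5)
The overall strategy you take — building a compositum field, identifying a suitable Chebotarev condition, and verifying non-emptiness — matches the paper, which also reduces the statement to a Chebotarev argument à la \cite[Proposition 44]{hamblenramakrishna}. Your treatment of conditions (4)--(6) and the normality/image argument for condition (3) is in the right spirit. But your treatment of condition (1) contains a genuine gap.

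You assert that condition (1) ``that $\rho_{2\restriction\ell}$ lies in $\mathcal{C}_\ell$ of Type \rm{I}'' is the same as ``$\rho_2(\sigma_\ell)$ is $\operatorname{GL}_2(\mathcal{O}/p^2)$-conjugate to $\ell^{(k-2)/2}\mtx{\ell}{0}{0}{1}$.'' That is not correct. Deformation classes are taken modulo $\widehat{\operatorname{GL}}_2$-conjugation, and for a trivial prime $\ell$ the matrix $\rho_2(\sigma_\ell)$ lies in the principal congruence subgroup $P_2$, on which $\widehat{\operatorname{GL}}_2(\mathcal{O}/p^2)$-conjugation acts trivially mod $p^2$. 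So membership in $\mathcal{C}_\ell$ of Type \rm{I} is the \emph{exact} matrix equality $\rho_2(\sigma_\ell) = \ell^{(k-2)/2}\mtx{\ell}{0}{\ell-1}{1} = \operatorname{Id} + p\mtx{k}{0}{2}{k-2}$, a strictly stronger condition than $\operatorname{GL}_2$-conjugacy to $\ell^{(k-2)/2}\mtx{\ell}{0}{0}{1}$. Your appeal to Lemma~\ref{new lemma 1}(1) (which gives $P_2 \subseteq D_2\cdot\operatorname{im}\rho_2$) then only establishes the existence of a $\sigma$ satisfying your weakened condition, not the actual condition (1); without the pinned-down matrix, the rest of the argument does not go through.

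This is precisely where the paper departs from \cite[Proposition 44]{hamblenramakrishna}: because $\det\rho_2 = \alpha\chi^{k-1}$ is not trivial modulo $p^2$ (unlike in Hamblen--Ramakrishna, where $\chi^{p^2(p-1)}\equiv 1 \bmod p^2$), the diagonal part of $\rho_2(\sigma_\ell)$ is forced by the congruence $\ell\equiv 1+2p\pmod{p^2}$ to be $(k-1)\operatorname{Id}$, and only the trace-zero part of $\iota(\sigma_\ell)\in\operatorname{Ad}\bar{\rho}$ can be prescribed freely. The paper handles this by replacing $\Q(\rho_2)$ with the subfield $\Q(\rho_2)'$ cut out by the surjection $\operatorname{Gal}(\Q(\rho_2)/\Q(\bar{\rho}))\twoheadrightarrow \operatorname{Ad}^0\bar{\rho}$, and then computes explicitly that the desired class in $\operatorname{Ad}^0\bar{\rho}$ is $\mtx{1}{0}{2}{-1}$. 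Your proposal never makes this computation (you never tie together the constraint $\ell\equiv 1+2p\pmod{p^2}$ with the determinant of $\rho_2$ to determine the diagonal component), so the passage from Lemma~\ref{new lemma 1}(1) to ``condition (1) holds'' does not work. The rest of the compatibility discussion (your cohomological vanishing claim $H^1\bigl(\operatorname{Gal}(\Q(\bar{\rho},\mu_p)/\Q),M\bigr)=0$) is also left unproved; the paper instead defers to \cite[Proposition 44]{hamblenramakrishna} for compatibility of (1)--(5), noting only that (6) is a prime-to-$p$ splitting condition and hence independent.
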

\begin{proof}
    The argument extends that of \cite[Proposition 44]{hamblenramakrishna}. In fact, we prove that the set of primes $\ell$ satisfying the conditions above has positive density and is defined by a non-empty Chebotarev condition. More precisely, there is a Galois number field $\cF/\Q$ and a non-empty subset $\mathcal{S}$ of $\op{Gal}(\cF/\Q)$ such that if $\ell$ is a prime which is unramified in $\cF$ and $\sigma_\ell\in \mathcal{S}$, then $\ell$ is a trivial prime which satisfies conditions (1)--(6). Set $\cF_0:=\Q(\bar{\rho}, \mu_{p^2})$, from the proof of Lemma \ref{triv primes positive density}, there is a non-empty set $\cS_0\subseteq \op{Gal}(\cF_0/\Q)$ such that if $\ell$ is a prime which is unramified in $\cF_0$ and $\sigma_\ell\in \cS_0$, then $\ell$ is a trivial prime. In fact, $\cS_0$ consists of the non-trivial elements in the subgroup $\op{Gal}(\cF_0/\Q(\bar{\rho}, \mu_p))$. Each of the conditions ($i$) for $i=1, \dots, 6$ is determined by a non-empty Chebotarev condition, i.e., there is a Galois number field $\cF_i/\Q$ and a non-empty $\mathcal{S}_i\subset \op{Gal}(\cF_i/\Q)$ such that if $\ell$ is unramified in $\cF_i$ and $\sigma_\ell\in \cS_i$ then $\ell$ satisfies condition ($i$). Setting $\cF:=\cF_0\cdots \cF_6$, let $\cS$ consist of all $\sigma\in \op{Gal}(\cF/\Q)$ such that $\sigma_{|\cF_i}\in \cS_i$ for $i=0,\dots, 6$. The conditions (1)--(6) are shown to be \emph{compatible}, i.e., the set $\cS$ is shown to be non-empty. By the Chebotarev density theorem, the set of trivial primes $\ell$ satisfying (1)--(6) is infinite with density at least $\frac{|\mathcal{S}|}{[\cF:\Q]}$.

    \par The conditions (2)--(5) coincide with the conditions (2)--(5) of \cite[Proposition 44]{hamblenramakrishna}. The fields $\cF_i$ are listed there for $i=2, \dots, 5$. Condition (6) is the condition that $\ell$ splits completely in $\Q(\beta)$. For condition (1), note that $\rho_2$ is unramified at $\ell$ since $\ell\notin Y$. Let $\mathcal{K}:=\op{Gal}\left(\Q(\rho_2)/\Q(\bar{\rho})\right)$ and consider the injection $\iota: \mathcal{K}\hookrightarrow \op{Ad}\bar{\rho}$ from the proof of Lemma \ref{new lemma 1}. This gives a surjection $\op{Gal}(\Q(\rho_2)/\Q(\bar{\rho}))\twoheadrightarrow \op{Ad}^0\bar{\rho}$ and thus gives rise to a subfield $\Q(\rho_2)'$ of $\Q(\rho_2)$ with $\op{Gal}(\Q(\rho_2)'/\Q(\bar{\rho}))\simeq \op{Ad}^0\bar{\rho}$. Let us describe this isomorphism in greater detail. Given $\sigma\in \mathcal{K}$, write \[\rho_2(\sigma)=\op{Id}+p\left(\mtx{d}{}{}{d}+\mtx{a}{b}{c}{-a}\right)\] so $\op{det}\rho_2(\sigma)=1+2dp \pmod{p^2}$. Then $\sigma$ is mapped to $\mtx{a}{b}{c}{-a}\in \op{Ad}^0\bar{\rho}$. This association factors through $\op{Gal}(\Q(\rho_2)'/\Q(\bar{\rho}))$. Let $\ell$ be a trivial prime such that $\ell\equiv 1+2p \pmod{p^2}$. Note that $\op{det}\rho_2(\sigma_\ell)=\alpha(\sigma_\ell) \chi^{k-1}(\sigma_\ell)=\ell^{k-1}\equiv 1+2(k-1)p\pmod{p^2}$ since $\alpha(\sigma_\ell)=\varphi(\sigma_\ell)=1$.
    Hence $d=k-1\pmod{p}$ and 
    \[\rho_2(\sigma_\ell)=\op{Id}+p\left(\mtx{k-1}{}{}{k-1}+\mtx{a}{b}{c}{-a}\right).\] In order for $\rho_2$ to satisfy $\cC_\ell$ of Type I, it is necessary that 
    \[\rho_2(\sigma_\ell)=\ell^{\frac{k-2}{2}}\mtx{\ell-x}{x}{\ell-x-1}{x+1}\text{ and } \rho_2(\tau_\ell)=\mtx{1-y}{y}{-y}{1+y}.\] Since $p^2|x,y$, we must have that 
    \[\rho_2(\sigma_\ell)=\ell^{\frac{k-2}{2}}\mtx{\ell}{}{\ell-1}{1}\text{ and } \rho_2(\tau_\ell)=\mtx{1}{}{}{1}.\]
    Without loss of generality, we may assume the second condition holds since it holds for all but finitely many primes $\ell$. As for the first condition, since $\ell\equiv 1+2p\pmod{p^2}$, we must have
    \[\rho_2(\sigma_\ell)=\op{Id}+p\mtx{k}{0}{2}{k-2}.\] Thus, taking $\mtx{a}{b}{c}{-a}=\mtx{1}{0}{2}{-1}$, we get an element $\sigma\in \op{Gal}(\Q(\rho_2)'/\Q)$ such that if $\sigma_\ell=\sigma$, then condition (1) holds. Thus we set $\cF_1:=\Q(\rho_2)'$ and $\cS_1:=\{\sigma\}$. It remains to show that the conditions (1)--(6) are compatible, i.e., the set $\mathcal{S}\in \op{Gal}(\cF/\Q)$ is indeed non-empty. The arguments of \cite[Proposition 44]{hamblenramakrishna} show verbatim that (1)--(5) can be simultaneously satisfied, with $\Q(\rho_2)'$ replacing $\Q(\rho_2)$. This is to say that if we set $\cF':=\cF_0\dots \cF_5$, there is a non-empty set $\cS'\subset \op{Gal}(\cF'/\Q(\bar{\rho}))$ such that if $\ell$ is unramified in $\cF'$ and $\sigma_\ell\in \cS'$ then $\ell$ is a trivial prime which satisfies (1)--(5). On the other hand, condition (6) is a splitting condition in a prime to $p$ extension $\Q(\beta)/\Q$ and is easily seen to be independent of (1)--(5). Note that $\cF:=\cF_0\dots \cF_6=\cF'\cdot \Q(\beta)$. Let $\cS\subset \op{Gal}(\cF/\Q)$ consist of $\sigma$ such that $\sigma_{|\Q(\beta)}=1$ and $\sigma_{|\cF'}\in \cS'$, the set $\cS$ is non-empty and thus the set of trivial primes $\ell$ satisfying (1)--(6) has positive density. In particular, this set of primes is infinite.
\end{proof}

We remark that in this section only conditions (1)--(5) are used, it is in the proof of Proposition \ref{prop bounding dim of Selmer} that condition (6) is essential.

\par Recall that $X_1:=T\cup Z$. For $\ell \in T\setminus \Sigma$, $\cN_\ell$ is Type \rm{I}. For $\ell\in Z$, recall that $\cN_\ell$ is Type \rm{II}. Suppose that $H^1_{\cN}(\op{G}_{X_1}, \op{Ad}^0\bar{\rho})\neq 0$ and $H^1_{\cN^\perp}(\op{G}_{X_1}, \op{Ad}^0\bar{\rho}^*)\neq 0$, we choose non-zero classes $f\in H^1_{\cN}(\op{G}_{X_1}, \op{Ad}^0\bar{\rho})$ and $\psi\in H^1_{\cN^\perp}(\op{G}_{X_1}, \op{Ad}^0\bar{\rho}^*)$. It follows from Proposition \ref{inf many triv primes f psi conditions} and \cite[Proposition 46]{hamblenramakrishna} that there is a finite set of primes $X$ containing $X_1$ such that the Selmer group $H^1_{\mathcal{N}}(\op{G}_X, \op{Ad}^0\bar{\rho})$ and the dual Selmer group $H^1_{\mathcal{N}^\perp}(\op{G}_X, \op{Ad}^0\bar{\rho}^*)$ are both $0$. The primes $\ell\in X\setminus X_1$ are trivial primes for which $(\mathcal{C}_\ell, \mathcal{N}_\ell)$ are chosen to be of Type I. This step is called \emph{killing the dual Selmer group}.

The vanishing of these Selmer groups and the long exact sequence \eqref{PT les} implies that there is an isomorphism:
\begin{equation}\label{global selmer isomorphism 1}H^1(\op{G}_X, \op{Ad}^0\bar{\rho})\xrightarrow{\sim} \bigoplus_{\ell\in X} \left(\frac{H^1(\Q_\ell, \op{Ad}^0\bar{\rho})}{\mathcal{N}_\ell}\right).\end{equation}
For each prime $\ell\in X$, pick $f_\ell\in H^1(\Q_\ell, \op{Ad}^0\bar{\rho})$ such that 
\begin{equation}\label{local eqn}(\op{Id}+p^2 f_\ell) (\xi_3)_{|\ell}\in \cC_\ell.\end{equation} Proposition \ref{Nl preserves Cl for mod p^3} asserts that $\cN_\ell$ preserves $\cC_\ell$ for small extensions $\cO/p^n\rightarrow \cO/p^{n-1}$ for $n\geq 3$. It thus follows that any translate of $f_\ell$ by an element in $\mathcal{N}_\ell$ will also satisfy \eqref{local eqn}. Thus, we can view $f_\ell$ as an element in the quotient $\left(\frac{H^1(\Q_\ell, \op{Ad}^0\bar{\rho})}{\mathcal{N}_\ell}\right)$. The tuple \[(f_\ell)\in \bigoplus_{\ell\in X} \left(\frac{H^1(\Q_\ell, \op{Ad}^0\bar{\rho})}{\mathcal{N}_\ell}\right)\]arises from a global cohomology class $f\in H^1(\op{G}_X, \op{Ad}^0\bar{\rho})$ since the map \eqref{global selmer isomorphism 1} is an isomorphism. We set $\rho_3:=(\op{Id}+p^2f)\xi_3$ and find that $\rho_3$ satisfies the liftable deformation conditions $\mathcal{C}_\ell$ at all primes $\ell\in X$. Note that this argument does not apply to show $\rho_2$ satisfies the $\cC_\ell$ conditions for all $\ell\in X$ since it is not known that $\cN_\ell$ preserves $\cC_\ell$ for the extension $\cO/p^2\rightarrow \cO/p$. This is why \cite{KLR} is necessary when constructing $\rho_2$.

\par Since $\rho_3$ satisfies the liftable deformation conditions $\cC_\ell$ for $\ell\in X$, the obstruction to lifting $\rho_3$ to 
\[\xi_4: \op{G}_X\rightarrow \op{GL}_2(\cO/p^4)\] lies in $\Sh_X^2(\op{Ad}^0\bar{\rho})$. This group is evidently $0$ since $X$ contains $T$. Hence, the lift $\xi_4:\op{G}_X\rightarrow \op{GL}_2(\cO/p^4)$ exists. There is a tuple 
\[(g_\ell)\in \bigoplus_{\ell\in X} \left(\frac{H^1(\Q_\ell, \op{Ad}^0\bar{\rho})}{\mathcal{N}_\ell}\right)\] which is defined by the property that for all $\ell\in X$, 
\[(\op{Id}+p^3 g_\ell) (\xi_4)_{|\ell}\in \cC_\ell.\] This tuple arises from a global cohomology class $g\in H^1(\op{G}_X, \op{Ad}^0\bar{\rho})$ since the map \eqref{global selmer isomorphism 1} is an isomorphism. We take $\rho_4:=(\op{Id}+p^3 g)\xi_4$ and find that $\rho_4$ by design satisfies the conditions $\cC_\ell$ at all primes $\ell\in X$. In this way, we continue the argument inductively to obtain a compatible family of deformations 
\[\rho_n: \op{G}_X\rightarrow \op{GL}_2(\cO/p^n)\] which satisfy the conditions $\cC_\ell$ for all primes $\ell\in X$. Take $\rho:\op{G}_X\rightarrow \op{GL}_2(\cO)$ to denote the inverse limit $\varprojlim_n \rho_n$. We find that $\rho$ is odd, irreducible and satisfies the conditions $\cC_\ell$ at the primes $\ell\in X$. In particular, it satisfies $\cC_p$, which makes it ordinary at $p$. If $\varphi_{|p}\neq 1$, then a result of Skinner and Wiles \cite[\emph{Theorem} on p. 6]{skinnerwiles} implies that $\rho$ is modular. We remark that recent work of Pan \cite{Pan} removes the condition $\varphi_{|p}\neq 1$, though for us later it will always hold.

\section{Main results}\label{s 5}

\par In this section, we prove the main result of this article, i.e., Theorem \ref{main thm}. Let $p\geq 5$. We start with a mod $p$ representation 
\[\bar{\rho}=\mtx{\varphi}{\ast}{}{1}:\op{G}_{\Q}\rightarrow \op{GL}_2(\F_p).\] Assume that:
\begin{enumerate}
    \item $\bar{\rho}$ is indecomposable,
    \item $\varphi=\bar{\chi} \beta^2$ where $\beta:\op{G}_{\Q}\rightarrow \F_p^\times$ is an odd character which is unramified at $p$, 
    \item $\beta^2\neq 1$.
\end{enumerate}
\noindent It is clear that $\bar{\rho}$ satisfies the hypotheses of Theorem \ref{hamblen ramakrishna thm} with $q = p$. In greater detail, the assumption $\beta^2 \neq 1$ implies that $\varphi \neq \bar{\chi}$. Since $\beta$ is unramified at $p$, we have $\varphi_{|\op{I}_p} = \bar{\chi}_{|\op{I}_p}$. As $p \geq 5$, it follows that $\bar{\chi}^2_{|\op{I}_p}\neq 1$. Thus $\varphi^2 \neq 1$ and $\varphi \neq \bar{\chi}^{-1}$. Finally, since $\varphi_{|p} \neq 1$, one finds that $\bar{\rho}_{|p}$ is not of the form $\mtx{1}{\ast}{}{1}$.
\par Recall that $\Sigma$ is the set of primes containing $p$ and the primes at which $\bar{\rho}$ is ramified. Let $\nu$ be the character $\chi\widetilde{\beta}^2$. We extend the arguments of Hamblen and Ramakrishna to show that modular lifts of $\bar{\rho}$ exist which also satisfy a number of local conditions at a prescribed set of primes. These local conditions will guarantee large $p$-rank in the Bloch--Kato Selmer group associated to a certain Galois stable lattice. Suppose that $\rho:\op{G}_{\Q}\rightarrow \op{GL}_2(\Z_p)$ is a $p$-ordinary modular lift of $\bar{\rho}$ with determinant $\nu$ and let $f$ be the associated eigencuspform. Denote by $T_\rho$ the $\Z_p$-lattice corresponding to $\rho$ and set $V_\rho:=T_\rho\otimes_{\Z_p} \Q_p$. We set $\psi:=\widetilde{\beta}^{-1}$. Note that $\psi^2=\chi\nu^{-1}=\widetilde{\beta}^{-2}$.
\par Observe that $k=2$ is the weight of $f$ and $\widetilde{\beta}^2$ is its nebentype. Let $V$ be a self-dual twist of $V_\rho$. More specifically, $V=V_\rho(\psi)$. Likewise, we set $T:=T_\rho(\psi)$ and $A:=V/T$. Write $T= \Z_p e_1 \oplus \Z_p e_2$, which we recall to be the underlying Galois module of $\rho\otimes \psi$. We set $e_1':=p^{-1} e_1$ and $e_2':=e_2$, and consider the lattice $T':=\Z_p e_1'\oplus \Z_p e_2'$ contained in $V$. We let 
\[(\rho\otimes \psi)':\op{G}_{\Q}\rightarrow \op{Aut}_{\Z_p}(T')\xrightarrow{\sim} \op{GL}_2(\Z_p)\] be the Galois representation associated to $T'$ with respect to the basis $(e_1', e_2')$. Since $\bar{\rho}\otimes \bar{\psi}$ is upper triangular, we can write $(\rho\otimes \psi)(g)=\mtx{a}{b}{pc}{d}$, where $a, b, c, d \in \Z_p$. One finds that 
\[(\rho\otimes \psi)'(g)=\mtx{p}{0}{0}{1}\mtx{a}{b}{pc}{d}\mtx{p}{0}{0}{1}^{-1}=\mtx{a}{pb}{c}{d}.\]
In particular, the mod $p$ reduction of $(\rho\otimes \psi)'$ is of the form $\mtx{\bar{\psi}\varphi}{}{\ast}{\bar{\psi}}$.

We set \[A:=V/T=(\Q_p/\Z_p)e_1\oplus (\Q_p/\Z_p)e_2\text{ and }A':=V/T'=(\Q_p/\Z_p)e_1'\oplus (\Q_p/\Z_p)e_2',\]
and recall that $\op{Sel}_{\op{BK}}(A/\Q)$ and $\op{Sel}_{\op{BK}}(A'/\Q)$ are the associated Bloch--Kato Selmer groups.

We now describe conditions at a prime $\ell$ for $\delta_\ell(A)>0$ and $\delta_\ell(A')=0$ to be simultaneously satisfied. Let $\ell$ be a trivial prime and consider the deformations $(\cC_\ell, \mathcal{N}_\ell)$ of Type \rm{II} from the previous section. Recall that $(\cC_\ell, \mathcal{N}_\ell)$ is the conjugate of $(\mathcal{D}_\ell^{\op{ram}}, \mathcal{P}_\ell^{\op{ram}})$ by $\mtx{0}{1}{1}{0}$. Thus, $\cC_\ell$ are deformations $\varrho:\op{G}_\ell\rightarrow \op{GL}_2(\Z_p/p^n)$ of $\bar{\rho}_{|\ell}$ with a representative satisfying 
 \begin{equation}
\varrho(\sigma_\ell)=\mtx{1}{0}{x}{\ell}\text{ and } \varrho(\tau_\ell)=\mtx{1}{0}{y}{1},
   \end{equation}
   where $p^2|x$ and $p||y$. The action on $A$, with respect to the $\Q_p/\Z_p$-basis $(e_1, e_2)$, is via $\varrho\otimes \psi$, with representative given by:
   \begin{equation}\label{rho times psi eq 1}
\varrho\otimes \psi(\sigma_\ell)=\psi(\sigma_\ell)\mtx{1}{0}{x}{\ell}\text{ and } \varrho\otimes \psi(\tau_\ell)=\psi(\tau_\ell)\mtx{1}{0}{y}{1},
   \end{equation}
where $p^2|x$ and $p||y$. On the other hand, the action on $A'$, with respect to the $\Q_p/\Z_p$-basis $(e_1', e_2')$, is via $(\varrho\otimes \psi)'$, given by:
   \begin{equation}\label{rho times psi eq 2}
(\varrho\otimes \psi)'(\sigma_\ell)=\psi(\sigma_\ell)\mtx{1}{0}{x/p}{\ell}\text{ and } (\varrho\otimes \psi)'(\tau_\ell)=\psi(\tau_\ell)\mtx{1}{0}{y/p}{1},
   \end{equation}
 where $p^2|x$ and $p||y$. Let $\ell$ be a trivial prime. Since $\bar{\rho}_{|\ell}$ is trivial, it follows that $\varphi_{|\ell}=1$. Recall that $\varphi=\bar{\chi} \beta^2$ and $\bar{\chi}_{|\ell}=1$ since $\ell\equiv 1 \pmod{p}$. Thus we find that $\beta_{|\ell}^2=1$. If $\beta$ is unramified at $\ell$, then $\beta(\sigma_\ell)=\pm 1$. 
\begin{lemma}\label{lemma type II local}
    Let $\ell$ be a trivial prime. Assume that $\rho_{|\ell}$ satisfies $\cC_\ell$ of Type \rm{II}, then, we have that $\delta_\ell(A')=0$. Furthermore, if $\beta_{|\ell}=1$, then $\delta_\ell(A)=1$.
\end{lemma}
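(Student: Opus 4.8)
The plan is to compute the two local Tamagawa groups $\op{ker}h_{\ell,A}$ and $\op{ker}h_{\ell,A'}$ by hand. By Lemma \ref{formula for the Tamagawa number}, setting $\mathcal{A}:=A^{\op{I}_\ell}/(A^{\op{I}_\ell})_{\op{div}}$ we have $\op{ker}h_{\ell,A}=\mathcal{A}/(\sigma_\ell-1)\mathcal{A}$, and likewise for $A'$; hence $\delta_\ell(A)$ is determined by the inertia invariants of $A$, their maximal divisible subgroup, and the residual action of $\sigma_\ell$ on the non-divisible quotient. I would read all of this off from the explicit formulas for $\varrho\otimes\psi$ and $(\varrho\otimes\psi)'$ recorded in \eqref{rho times psi eq 1} and \eqref{rho times psi eq 2}. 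Recall $\ell\neq p$, so Lemma \ref{formula for the Tamagawa number} applies.

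First I would pin down the action of $\op{I}_\ell$. Since $\ell$ is a trivial prime, $\bar{\rho}_{|\ell}$ is trivial, so $\rho(\op{I}_\ell)$ is a pro-$p$ group; it is therefore topologically generated by $\rho(\tau_\ell)$, and the prime-to-$p$ part of $\op{I}_\ell$ acts trivially through $\rho$. As $\tau_\ell$ generates a pro-$p$ group and $\psi=\widetilde{\beta}^{-1}$ has order prime to $p$, we get $\psi(\tau_\ell)=1$. Thus \eqref{rho times psi eq 1} and \eqref{rho times psi eq 2} give
\[
(\varrho\otimes\psi)(\tau_\ell)=\mtx{1}{0}{y}{1},\qquad (\varrho\otimes\psi)'(\tau_\ell)=\mtx{1}{0}{y/p}{1},
\]
where $p^2\mid x$ and $p\|y$ (so $y/p\in\Zp^\times$), while the prime-to-$p$ part of $\op{I}_\ell$ acts on both $A$ and $A'$ by the scalar $\widetilde{\beta}^{-1}$, which takes values in $\{\pm 1\}$ since $\beta_{|\ell}^2=1$.

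For $A'$ this already forces $\delta_\ell(A')=0$. Indeed, because $y/p$ is a unit the subgroup of $(\Qp/\Zp)e_1'\oplus(\Qp/\Zp)e_2'$ fixed by $(\varrho\otimes\psi)'(\tau_\ell)$ is precisely $(\Qp/\Zp)e_2'$; intersecting with the subgroup fixed by the prime-to-$p$ part of $\op{I}_\ell$ — which is all of $A'$ when $\beta$ is unramified at $\ell$, and is $A'[2]=0$ when it is not (as $p$ is odd) — we find that $(A')^{\op{I}_\ell}$ is either $(\Qp/\Zp)e_2'$ or $0$, in both cases a divisible group. Hence $\mathcal{A}'=0$, so $\op{ker}h_{\ell,A'}=0$ and $\delta_\ell(A')=0$, with no assumption on $\beta_{|\ell}$.

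Finally, assume $\beta_{|\ell}=1$, so $\psi_{|\ell}=1$ and the prime-to-$p$ inertia acts trivially on $A$. Since $p\|y$, the relation $yu\in\Zp$ forces $u\in p^{-1}\Zp$, whence $A^{\op{I}_\ell}=(p^{-1}\Zp/\Zp)e_1\oplus(\Qp/\Zp)e_2$; its maximal divisible subgroup is $(\Qp/\Zp)e_2$, so $\mathcal{A}\cong\Z/p\Z$, generated by the class of $p^{-1}e_1$. It remains to compute $\sigma_\ell$ on this class: from \eqref{rho times psi eq 1}, $(\varrho\otimes\psi)(\sigma_\ell)\cdot p^{-1}e_1=\psi(\sigma_\ell)\big(p^{-1}e_1+(x/p)e_2\big)$, and $p^2\mid x$ makes $(x/p)e_2=0$ in $\Qp/\Zp$ while $\psi(\sigma_\ell)=1$; thus $\sigma_\ell$ fixes $\mathcal{A}$, giving $\op{ker}h_{\ell,A}=\mathcal{A}\cong\Z/p\Z$ and $\delta_\ell(A)=1$. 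The steps are all routine; the only point needing care is the bookkeeping with the valuations $p^2\mid x$ and $p\|y$ prescribed by the Type II condition, which is precisely what makes the Frobenius twist kill the $e_2$-component of $A^{\op{I}_\ell}$ while forcing the inertia invariants of $A'$ at $\ell$ to be divisible.
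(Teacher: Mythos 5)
Your proof is correct and mirrors the paper's: apply Lemma \ref{formula for the Tamagawa number}, read off $A^{\op{I}_\ell}$ and $(A')^{\op{I}_\ell}$ from the explicit Type \rm{II} matrices in \eqref{rho times psi eq 1}--\eqref{rho times psi eq 2}, check that $(A')^{\op{I}_\ell}$ is divisible, and compute the $\sigma_\ell$-action on $\mathcal{A}\cong\Z/p\Z$ using $p^2\mid x$. You add one genuinely useful refinement --- explicitly tracking the prime-to-$p$ part of $\op{I}_\ell$, which acts through $\widetilde{\beta}^{-1}$ and may be nontrivial if $\beta$ is ramified at $\ell$, so that $(A')^{\op{I}_\ell}$ could be $0$ rather than $(\Q_p/\Z_p)e_2'$ --- a case the paper's proof passes over silently; in either case the group is divisible, so the unconditional conclusion $\delta_\ell(A')=0$ holds as stated.
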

\begin{proof}
    \par First we show that $\delta_\ell(A')=0$. Let $\mathcal{A}':=(A')^{\op{I}_\ell}/(A')^{\op{I}_\ell}_{\op{div}}$, and recall from Lemma \ref{formula for the Tamagawa number} that
    \[\delta_\ell(A')=\dim_{\F_p}\left(\left(\mathcal{A}'/(\sigma_\ell-1)\mathcal{A}'\right)[p]\right).\]
    We show that $\mathcal{A}'=0$, i.e., that $(A')^{\op{I}_\ell}$ is divisible. From this we deduce that $\delta_\ell(A')=0$. Recall from \eqref{rho times psi eq 2} that $\tau_\ell$ acts on $A'=\Q_p/\Z_p e_1'\oplus \Q_p/\Z_p e_2'$ by the matrix $\psi(\tau_\ell)\mtx{1}{0}{y/p}{1}$, where $p||y$. Recall that $\psi=\widetilde{\beta}^{-1}$ and thus $\psi(\tau_\ell)=1$. Thus, the action of $\tau_\ell$ on $\mathcal{A}'$ is via $\mtx{1}{0}{y/p}{1}$. Since $y/p$ is a unit in $\Z_p$, it follows that $(A')^{\op{I}_\ell}=\Q_p/\Z_p e_2'$. In particular, $(A')^{\op{I}_\ell}$ is divisible and thus, $\mathcal{A}'=0$. We have thus proven that $\delta_\ell(A')=0$.
\par Now we assume that $\beta_{|\ell}=1$ and we show that $\delta_\ell(A)=1$. Since $\psi=\widetilde{\beta}^{-1}$, we have that $\psi_{|\ell}=1$, and in particular, $\psi(\sigma_\ell)=1$ and $\psi(\tau_\ell)=1$. Let $\mathcal{A}:=A^{\op{I}_\ell}/(A^{\op{I}_\ell})_{\op{div}}$, and recall from Lemma \ref{formula for the Tamagawa number} that
    \[\delta_\ell(A)=\dim_{\F_p}\left(\left(\mathcal{A}/(\sigma_\ell-1)\mathcal{A}\right)[p]\right).\] By \eqref{rho times psi eq 1}, $\tau_\ell$ acts on $A$ via the matrix $\mtx{1}{0}{y}{1}$.
    Since $p||y$, we thus find that as a $\Z_p$-module
    \begin{equation}\label{AIleqn}A^{\op{I}_\ell}=\left(p^{-1}\Z_p/\Z_p \right)e_1 \oplus \Q_p/\Z_p e_2.\end{equation}
    Thus, $(A^{\op{I}_\ell})_{\op{div}}=\Q_p/\Z_p e_2$ and $\mathcal{A}=\left(p^{-1}\Z_p/\Z_p \right)e_1$. On the other hand,
    \[(\sigma_\ell-1) e_1= \left(\psi(\sigma_\ell)-1\right)e_1+\psi(\sigma_\ell)xe_2=xe_2.\]
     We thus write 
    \[(\sigma_\ell-1)(p^{-1} e_1)=\left(\frac{x}{p}\right)e_2, \] and find that $(\sigma_\ell-1)(p^{-1} e_1)\in (\Z_p/\Z_p) e_1\oplus (\Z_p/\Z_p) e_2$. Thus, we have shown that
     $(\sigma_\ell-1) \mathcal{A}=0$. Hence, $\mathcal{A}/(\sigma_\ell-1)\mathcal{A}=\mathcal{A}$ and we conclude that $\delta_\ell(A)=1$. 
\end{proof}

\begin{definition}\label{defn of type III} We say that $(\cC_\ell, \cN_\ell)$ is of Type \rm{III} if it is the conjugate of $(\mathcal{D}_\ell^{\op{ram}}, \mathcal{P}_\ell^{\op{ram}})$ by $\mtx{0}{1}{1}{1}$.
\end{definition}
  The action of $\sigma_\ell$ and $\tau_\ell$ on $A$ with respect to the basis $(e_1, e_2)$ is given by: \begin{equation}\label{rho times psi eq 3}
\varrho\otimes \psi(\sigma_\ell)=\psi(\sigma_\ell)\mtx{1}{0}{(x+1-\ell)}{\ell}\text{ and } \varrho\otimes \psi(\tau_\ell)=\psi(\tau_\ell)\mtx{1}{0}{y}{1}.
   \end{equation}
   The action on $A'$ on the other hand is given by:
   \begin{equation}\label{rho times psi eq 4}
\varrho\otimes \psi(\sigma_\ell)=\psi(\sigma_\ell)\mtx{1}{0}{(x+1-\ell)/p}{\ell}\text{ and } \varrho\otimes \psi(\tau_\ell)=\psi(\tau_\ell)\mtx{1}{0}{y/p}{1}.
   \end{equation}
   Note that since $p^2|x$ and $p||(\ell-1)$, it follows that $(x+1-\ell)/p$ is not divisible by $p$. On the other hand, since $p||y$, we have that $y/p$ is not divisible by $p$.
   \begin{lemma}\label{lemma type III local}
    Suppose that $\ell$ is a trivial prime and that $\rho_{|\ell}$ satisfies $\cC_\ell$ of Type \rm{III}. Then, we have that $\delta_\ell(A')=0$. Moreover, if $\beta_{|\ell}=1$ it follows that $\delta_\ell(A)=1$ and $H^0(\Q_\ell, A)$ is finite.
\end{lemma}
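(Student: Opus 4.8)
The plan is to follow the proof of Lemma~\ref{lemma type II local} almost verbatim: the Type~\rm{III} condition differs from Type~\rm{II} only in the lower-left entry of $\varrho(\sigma_\ell)$, which now carries the shift $1-\ell$, and in the conjugating matrix, and neither change affects the action of inertia. Throughout I would use, as for a general trivial prime, that $\bar\rho_{|\ell}$ is trivial, so $\rho_{|\ell}$ is tamely ramified with image in the pro-$p$ group $\widehat{\op{GL}}_2$; consequently $\op{I}_\ell$ acts on $A$ and on $A'$ through the topological generator $\tau_\ell$ of the pro-$p$ quotient of tame inertia, while $\psi=\widetilde\beta^{-1}$ has order prime to $p$, so that $\psi(\tau_\ell)=1$ (a continuous map from the pro-$p$ group $\overline{\langle\tau_\ell\rangle}$ to a finite prime-to-$p$ group is trivial).

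First, to get $\delta_\ell(A')=0$, I would argue exactly as in the Type~\rm{II} case: by Lemma~\ref{formula for the Tamagawa number} it suffices to check that $(A')^{\op{I}_\ell}$ is divisible, and by \eqref{rho times psi eq 4} the element $\tau_\ell$ acts on $A'=(\Q_p/\Z_p)e_1'\oplus(\Q_p/\Z_p)e_2'$ via $\mtx{1}{0}{y/p}{1}$ with $y/p\in\Z_p^\times$, whence $(A')^{\op{I}_\ell}=(\Q_p/\Z_p)e_2'$ is divisible and $\mathcal{A}'=0$. For the remaining assertions I would add the hypothesis $\beta_{|\ell}=1$, so $\psi_{|\ell}=1$. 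By \eqref{rho times psi eq 3}, $\tau_\ell$ acts on $A$ via $\mtx{1}{0}{y}{1}$ with $p\|y$, giving $A^{\op{I}_\ell}=(p^{-1}\Z_p/\Z_p)e_1\oplus(\Q_p/\Z_p)e_2$ as in \eqref{AIleqn}, hence $(A^{\op{I}_\ell})_{\op{div}}=(\Q_p/\Z_p)e_2$ and $\mathcal{A}=(p^{-1}\Z_p/\Z_p)e_1\cong\Z/p\Z$. Using $\sigma_\ell$ acting via $\mtx{1}{0}{x+1-\ell}{\ell}$ one computes $(\sigma_\ell-1)(p^{-1}e_1)=\tfrac{x+1-\ell}{p}e_2$, which is $0$ in $(\Q_p/\Z_p)e_2$ since $\tfrac{x+1-\ell}{p}\in\Z_p$; therefore $(\sigma_\ell-1)\mathcal{A}=0$ and, by Lemma~\ref{formula for the Tamagawa number}, $\delta_\ell(A)=\dim_{\F_p}(\mathcal{A}[p])=1$. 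Finally, since $H^0(\Q_\ell,A)=(A^{\op{I}_\ell})^{\sigma_\ell}$ is the kernel of $\sigma_\ell-1$ on $A^{\op{I}_\ell}$, and $\sigma_\ell-1$ kills $(p^{-1}\Z_p/\Z_p)e_1$ by the same computation while acting on $(\Q_p/\Z_p)e_2$ as multiplication by $\ell-1$, which has $p$-adic valuation $1$ (as $\ell$ is a trivial prime), this kernel equals $(p^{-1}\Z_p/\Z_p)e_1\oplus(p^{-1}\Z_p/\Z_p)e_2$ and is finite; so $H^0(\Q_\ell,A)$ is finite.

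I do not expect a real obstacle here, since the argument is the Type~\rm{II} bookkeeping carried out with one extra summand. The one place deserving attention — and the only place the shift $1-\ell$ actually matters — is the identity $(\sigma_\ell-1)\mathcal{A}=0$: in the Type~\rm{II} proof this was immediate because the relevant coefficient $x/p$ lies in $p\Z_p$, whereas here $\tfrac{x+1-\ell}{p}$ is a unit, so one must recall that $\tfrac{x+1-\ell}{p}e_2$ vanishes simply because it is an integral multiple of $e_2$ inside $(\Q_p/\Z_p)e_2$, \emph{not} because of any extra $p$-divisibility. The only other routine point is that the relevant Galois actions genuinely factor through $\tau_\ell$ and $\sigma_\ell$, which follows from tameness at $\ell$ together with $\psi(\tau_\ell)=1$ and, for the assertion on $H^0(\Q_\ell,A)$, $\psi(\sigma_\ell)=1$.
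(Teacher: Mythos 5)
Your proof is correct and follows the paper's argument essentially verbatim: the same reduction to Lemma~\ref{formula for the Tamagawa number}, the same computation of $A^{\op{I}_\ell}$, $(A')^{\op{I}_\ell}$ and $\mathcal{A}$, $\mathcal{A}'$ from equations \eqref{rho times psi eq 3} and \eqref{rho times psi eq 4}, and the same observation that $\tfrac{x+1-\ell}{p}e_2$ is integral and hence vanishes in $A$. Your explicit identification of $H^0(\Q_\ell,A)$ with $(p^{-1}\Z_p/\Z_p)e_1\oplus(p^{-1}\Z_p/\Z_p)e_2$ is a mild sharpening of the paper's statement (which only notes that the $e_2$-piece is finite), but the route is the same.
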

\begin{proof}
  By \eqref{rho times psi eq 4} the action of $\tau_\ell$ on $A'$ is by $\mtx{1}{0}{y/p}{1}$. The same argument as in Lemma \ref{lemma type II local} then shows that $(A')^{\op{I}_\ell}$ is divisible and hence, $\mathcal{A}'=0$. It thus follows that $\delta_\ell(A')=0$. Now assume that $\beta_{|\ell}=1$. As mentioned above, since $\psi=\widetilde{\beta}^{-1}$, we have that $\psi(\sigma_\ell)=1$ and $\psi(\tau_\ell)=1$. As in the proof of Lemma \ref{lemma type II local}, 
\begin{equation}\label{AIeqn2}A^{\op{I}_\ell}=\left(p^{-1}\Z_p/\Z_p\right)e_1\oplus \Q_p/\Z_p e_2,\end{equation}
$(A^{\op{I}_\ell})_{\op{div}}=\Q_p/\Z_p e_2$ and $\mathcal{A}=\left(p^{-1}\Z_p/\Z_p\right)e_1$. We thus write 
    \[(\sigma_\ell-1)(p^{-1} e_1)=\left(\frac{x+1-\ell}{p}\right)e_2, \] and find that $(\sigma_\ell-1)(p^{-1} e_1)\in \left(\Z_p/\Z_p\right)e_1\oplus \left(\Z_p/\Z_p\right) e_2$. We have that
     $(\sigma_\ell-1) \mathcal{A}=0$ and therefore, $\delta_\ell(A)=1$. Next, we show that $H^0(\Q_\ell, A)$ is finite. Note that $H^0(\Q_\ell, A)=H^0(\langle \sigma_\ell \rangle, A^{\op{I}_\ell})$ and from \eqref{AIeqn2}, $A^{\op{I}_\ell}=\left(p^{-1}\Z_p/\Z_p \right)e_1 \oplus \Q_p/\Z_p e_2$. Thus it suffices to show that $\left(\Q_p/\Z_p e_2\right)^{\langle\sigma_\ell\rangle}$ is finite. By \eqref{rho times psi eq 3}, $\sigma_\ell$ acts on $A$ via the matrix $\mtx{1}{0}{(x+1-\ell)}{\ell}$ and thus 
     \[(\sigma_\ell-1)e_2=(\ell-1) e_2.\] Note that $p||(\ell-1)$ and therefore, $\left(\Q_p/\Z_p e_2\right)^{\langle\sigma_\ell\rangle}=\left(p^{-1}\Z_p/\Z_p\right)e_2$. Thus we have shown that $H^0(\Q_\ell, A)$ is finite.
\end{proof}

\begin{lemma}\label{Nl lemma}
    Suppose that $\ell$ is a trivial prime of Type \rm{III} and let $g\in \cN_\ell$. Then, there exist $a,b,c,d\in \F_q$ such that 
    \[g(\sigma_\ell)=\mtx{-a}{a}{b}{a}\text{ and } g(\tau_\ell)=\mtx{c}{0}{d}{-c}.\]
\end{lemma}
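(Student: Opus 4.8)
The plan is to unwind Definition~\ref{defn of type III} and reduce the statement to an explicit $2\times 2$ matrix computation. By definition, a pair $(\cC_\ell,\cN_\ell)$ of Type \rm{III} is the conjugate of $(\mathcal{D}_\ell^{\op{ram}},\mathcal{P}_\ell^{\op{ram}})$ by $M:=\mtx{0}{1}{1}{1}$, and $\mathcal{P}_\ell^{\op{ram}}\subseteq H^1(\op{G}_\ell,\op{Ad}^0\bar{\rho})$ is the $\F_q$-span of the three cocycles $f_1,f_2,g^{\op{ram}}$ listed just before Definition~\ref{trivial local def}. Since $\ell$ is a trivial prime, $\bar{\rho}_{|\ell}$ is the trivial representation, so $\op{G}_\ell$ acts trivially on $\op{Ad}^0\bar{\rho}$ and conjugation by $M$ acts on $H^1(\op{G}_\ell,\op{Ad}^0\bar{\rho})$ by $h\mapsto\bigl(\sigma\mapsto M\,h(\sigma)\,M^{-1}\bigr)$. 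Hence $\cN_\ell$ is the $\F_q$-span of $h_i:=M f_i M^{-1}$ for $i=1,2$ together with $h_3:=M g^{\op{ram}} M^{-1}$.

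First I would record $M^{-1}=\mtx{-1}{1}{1}{0}$ and then conjugate each generator. A direct multiplication yields $h_1(\sigma_\ell)=\mtx{0}{0}{1}{0}$, $h_1(\tau_\ell)=0$, $h_2(\sigma_\ell)=0$, $h_2(\tau_\ell)=\mtx{0}{0}{1}{0}$, and, writing $t\in\F_q$ for the reduction of the scalar $y/(\ell-1)$ appearing in $g^{\op{ram}}(\tau_\ell)$, $h_3(\sigma_\ell)=\mtx{-1}{1}{-1}{1}$ and $h_3(\tau_\ell)=\mtx{t}{0}{2t}{-t}$. The scalar $t$ is well defined modulo $p$ because $v_p(y)=1$ for deformations satisfying $\mathcal{D}_\ell^{\op{ram}}$ and $v_p(\ell-1)=1$ for a trivial prime, so $y/(\ell-1)$ is a $p$-adic unit; its precise value will be irrelevant.

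Finally I would note that the two shapes in the statement, namely $\mtx{-a}{a}{b}{a}$ for the value at $\sigma_\ell$ and $\mtx{c}{0}{d}{-c}$ for the value at $\tau_\ell$, each cut out an $\F_q$-subspace of $\op{Ad}^0\bar{\rho}$, and that the displayed values of $h_1,h_2,h_3$ at $\sigma_\ell$ all lie in the first while those at $\tau_\ell$ all lie in the second. Consequently an arbitrary $g=\alpha h_1+\beta h_2+\gamma h_3\in\cN_\ell$ satisfies $g(\sigma_\ell)=\mtx{-\gamma}{\gamma}{\alpha-\gamma}{\gamma}$ and $g(\tau_\ell)=\mtx{\gamma t}{0}{\beta+2\gamma t}{-\gamma t}$, which are of the asserted form with $(a,b)=(\gamma,\alpha-\gamma)$ and $(c,d)=(\gamma t,\beta+2\gamma t)$. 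There is no real obstacle here beyond bookkeeping: the only points worth a line of justification are the closure/linearity remark (so that a shape verified on a spanning set propagates to all of $\cN_\ell$) and the well-definedness of $y/(\ell-1)$ modulo $p$ noted above.
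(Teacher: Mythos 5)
Your proposal is correct and follows essentially the same route as the paper: both unwind the Type III definition, conjugate the generators $f_1,f_2,g^{\op{ram}}$ by $\mtx{0}{1}{1}{1}$, and read off the shapes by linearity (the paper conjugates the general combination $uf_1+vf_2+wg^{\op{ram}}$ in one step, you conjugate the basis first — an immaterial reorganization). Your matrix arithmetic matches the paper's up to the relabeling $(\alpha,\beta,\gamma)\leftrightarrow(u,v,w)$ and $t\leftrightarrow y'$.
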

\begin{proof}
    Recall that $\mathcal{P}_\ell^{\op{ram}}$ is the space spanned by $\{f_1, f_2, g^{\op{ram}}\}$ and that $\cN_\ell$ is the conjugate of $\mathcal{P}_\ell^{\op{ram}}$ by $\mtx{0}{1}{1}{1}$. Let $f\in \mathcal{P}_\ell^{\op{ram}}$ be an element such that $g=\mtx{0}{1}{1}{1}f \mtx{0}{1}{1}{1}^{-1}$, and write 
    \[f=u f_1+v f_2+w g^{\op{ram}},\] where $u, v, w\in \F_q$. Set $y':=\frac{y}{\ell-1}$ and recall that 
    \[
\begin{split}
f_1(\sigma_\ell)=\mtx{0}{1}{0}{0} \text{, } & f_1(\tau_\ell)=\mtx{0}{0}{0}{0},\\ 
f_2(\sigma_\ell)=\mtx{0}{0}{0}{0} \text{, } & f_2(\tau_\ell)=\mtx{0}{1}{0}{0},\\  g^{\operatorname{ram}}(\sigma_\ell)=\mtx{0}{0}{1}{0} \text{, } & g^{\operatorname{ram}}(\tau_\ell)=\mtx{-y'}{0}{0}{y'}.
\end{split}\]
We find that 
\[\begin{split}& g(\sigma_\ell)=\mtx{0}{1}{1}{1} \mtx{0}{u}{w}{0}\mtx{-1}{1}{1}{0}=\mtx{-w}{w}{-w+u}{w}; \\ 
& g(\tau_\ell)=\mtx{0}{1}{1}{1} \mtx{-wy'}{v}{0}{wy'}\mtx{-1}{1}{1}{0}=\mtx{wy'}{0}{v+2wy'}{-wy'}.
\end{split}\] Setting $a:=w$, $b:=-w+u$, $c:=wy'$ and $d:=v+2wy'$, the result follows.
\end{proof}

Let $U_d$ (resp. $V_d$) be the $d$-dimensional $\F_p[\op{G}_{\Q}]$-submodules of $\op{Ad}^0\bar{\rho}$ (resp. $\op{Ad}^0\bar{\rho}^*$). Let $\mathcal{M}=\{U_1^*, U_2^*, U_3^*, V_1, V_2, V_3\}$, by \cite[Proposition 13]{hamblenramakrishna}, there is a finite set of primes $T\supset \Sigma$ such that $T\setminus \Sigma$ consists of trivial primes and $\Sh^1_T(M)=0$ for all $M\in \mathcal{M}$. At each of the primes $\ell\in T\setminus \Sigma$ the pair $(\cC_\ell, \mathcal{N}_\ell)$ is of Type \rm{I} (cf. \cite[p. 930]{hamblenramakrishna}). 

\begin{proposition}\label{type 1,2,3 propn}
    Suppose that $\rho:\op{G}_X\rightarrow \op{GL}_2(\Z_p)$ is a modular deformation of $\bar{\rho}$, and $X$ is a finite set of primes containing $T$, such that $X\setminus T$ consists of trivial primes. Moreover suppose that for integers $n\geq 1$ and $m\geq 0$, $X$ contains a set of primes $\{\ell_1, \dots, \ell_n, q_1 \dots, q_m\}$ disjoint from $T$ such that:
    \begin{itemize}
        \item $\rho_{|\ell_i}$ satisfies $\mathcal{C}_{\ell_i}$ of Type {\rm{III}} and $\beta_{|\ell_i}=1$ for $1\leq i\leq n$,
        \item $\rho_{|q_j}$ satisfies $\cC_{q_j}$ of Type {\rm{I}} for $1\leq j\leq m$,
        \item $\rho_{|\ell}$ satisfies $\mathcal{C}_\ell$ of Type {\rm{II}} or Type {\rm{III}} for $\ell\in X\setminus \left(T \cup \{\ell_1, \dots, \ell_n, q_1, \dots, q_m\}\right)$.
    \end{itemize}
    Then, we find that 
    \begin{equation}\label{dimension lower bound in terms of m and n}\dim \op{Sel}_{\op{BK}} (A'/\Q)[p]\geq n/2- 2 (m+\# T)-4.\end{equation}
\end{proposition}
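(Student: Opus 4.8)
The plan is to obtain Proposition~\ref{type 1,2,3 propn} as a direct consequence of Proposition~\ref{prop BK lower bound} applied to the isogenous pair $A,A'$ set up above: I would first verify the three standing hypotheses of that proposition together with the positivity of the $\delta_{\ell_i}(A)$, and then control the error term $\sum_{\ell\in S}\delta_\ell(A')$ by a crude count.

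\textbf{Step 1: the hypotheses of Proposition~\ref{prop BK lower bound}.} I would check $H^0(\Q,A)=0$ and $H^0(\Q,A')=0$ by showing $A[p]^{\op{G}_\Q}=A'[p]^{\op{G}_\Q}=0$. Both $A[p]$ and $A'[p]$ are successive extensions of the characters $\bar\psi=\widetilde\beta^{-1}\bmod p=\beta^{-1}$ and $\varphi\bar\psi=\bar\chi\beta$, so it suffices to note that neither character is trivial: $\beta^{-1}\neq1$ because $\beta$ is odd, and $\bar\chi\beta\neq1$ because $\beta$ is unramified at $p$ while $\bar\chi$ is ramified at $p$ (using $p\geq5$). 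For hypothesis~(3) I would invoke Proposition~\ref{mu=0 propn}: $\varphi=\bar\chi\beta^2$ is odd and, since $\beta$ is unramified at $p$, has $\varphi_{|\op{I}_p}=\bar\chi_{|\op{I}_p}\neq1$, hence is ramified at $p$, while $\bar\psi=\beta^{-1}$ is odd and unramified at $p$; thus $\mu(A)=0$. For hypothesis~(2), since $n\geq1$ the prime $\ell_1$ satisfies $\cC_{\ell_1}$ of Type~III with $\beta_{|\ell_1}=1$, so Lemma~\ref{lemma type III local} shows $H^0(\Q_{\ell_1},A)$ is finite.

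\textbf{Step 2: invoking Proposition~\ref{prop BK lower bound} and bounding the error.} Lemma~\ref{lemma type III local} also gives $\delta_{\ell_i}(A)=1>0$ for $i=1,\dots,n$, and each $\ell_i\in S$ since $h_{\ell_i,A}$ is injective for primes outside $S$. Proposition~\ref{prop BK lower bound} then yields
\[\dim_{\F_p}\op{Sel}_{\op{BK}}(A'/\Q)[p]\ \geq\ \frac{n}{2}-\sum_{\ell\in S}\delta_\ell(A').\]
To finish I would show $\sum_{\ell\in S}\delta_\ell(A')\leq 2(m+\#T)+4$. Since $\rho$ ramifies only at primes of $X$, we have $S\subseteq X$, and $\delta_\ell(A')=0$ for $\ell\notin S$, so the sum equals $\sum_{\ell\in X}\delta_\ell(A')$. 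Every prime of $X\setminus(T\cup\{q_1,\dots,q_m\})$ — in particular each $\ell_i$ — carries a Type~II or Type~III condition, so Lemmas~\ref{lemma type II local} and~\ref{lemma type III local} give $\delta_\ell(A')=0$ there. The surviving contributions are only from $T$ and from $q_1,\dots,q_m$; for these I would use the Euler-characteristic bounds from the proof of Lemma~\ref{explicit formula for tamagawa}, namely $\delta_\ell(A')\leq2$ for $\ell\neq p$ and $\delta_p(A')\leq6$. As $p\in\Sigma\subseteq T$, this gives $\sum_{\ell\in T}\delta_\ell(A')\leq6+2(\#T-1)$ and $\sum_{j}\delta_{q_j}(A')\leq2m$, hence $\sum_{\ell\in S}\delta_\ell(A')\leq2\#T+2m+4$. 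Substituting into the displayed inequality gives~\eqref{dimension lower bound in terms of m and n}.

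The step I expect to require the most care is Step~2: one must be certain that every prime of $X$ lying outside $T$ and outside $\{q_1,\dots,q_m\}$ (the $\ell_i$ included) genuinely satisfies a Type~II or Type~III local condition, so that its Tamagawa factor for $A'$ is trivial, and that $S\subseteq X$; granting this, the conclusion is a mechanical assembly of Proposition~\ref{prop BK lower bound} with Lemmas~\ref{lemma type II local}, \ref{lemma type III local} and~\ref{explicit formula for tamagawa}.
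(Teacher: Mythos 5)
Your proposal is correct and follows essentially the same argument as the paper's: verify the hypotheses of Proposition~\ref{prop BK lower bound} (the vanishing of $H^0(\Q,A)$ and $H^0(\Q,A')$ via the non-triviality of $\varphi\bar\psi$ and $\bar\psi$, $\mu(A)=0$ via Proposition~\ref{mu=0 propn}, and local finiteness via Lemma~\ref{lemma type III local}), then use Lemmas~\ref{lemma type II local}, \ref{lemma type III local} and \ref{explicit formula for tamagawa} to kill the contributions of the $\ell_i$ and the other non-exceptional trivial primes and to bound the remaining contributions from $T\cup\{q_1,\dots,q_m\}$ by $2(m+\#T)+4$. Your only addition over the paper's write-up is the explicit check that $\ell_i\in S$ (and that $S\subseteq X$), which the paper elides but which is indeed needed; this is a harmless and slightly more careful version of the same proof.
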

\begin{proof}
Let us begin by checking the conditions of Proposition \ref{prop BK lower bound}. Note that the action on $A$ (resp. $A'$) is given by $\rho\otimes \psi$ (resp. $(\rho\otimes \psi)'$). The residual representations are as follows:
\[\overline{(\rho\otimes \psi)}=\mtx{\varphi \bar{\psi}}{\ast}{0}{\bar{\psi}}\text{ and }\overline{(\rho\otimes \psi)'}=\mtx{\varphi \bar{\psi}}{0}{\ast}{\bar{\psi}}.\]
Since $\bar{\chi}$ is odd, we find that $\varphi=\bar{\chi}\beta^2$ is odd. As $\beta$ is assumed to be unramified at $p$, it follows that $\varphi$ is ramified at $p$. The character $\bar{\psi}=\beta^{-1}$ is odd and unramified at $p$. The assumptions of Proposition \ref{mu=0 propn} are thus satisfied and it follows that $\mu(A)=0$. Since $\varphi \bar{\psi}$ is ramified at $p$ and $\bar{\psi}$ is odd, these characters are non-trivial, and hence $H^0(\Q, A)=0$ and $H^0(\Q, A')=0$. It follows from Lemma \ref{lemma type III local} that for $i=1, \dots, n$, we have that $H^0(\Q_{\ell_i}, A)$ is finite. By Proposition \ref{prop BK lower bound}, 
\[\op{dim}_{\F_p} \op{Sel}_{\op{BK}}(A'/\Q)[p]\geq n/2-\sum_{\ell\in X} \delta_\ell(A').\]
The primes $\ell\in X\setminus \left(T \cup \{q_1, \dots, q_m\}\right)$ are trivial primes with $\mathcal{C}_\ell$ of Type \rm{II} or \rm{III}. It follows from Lemmas \ref{lemma type II local} and \ref{lemma type III local} that $\delta_\ell(A')=0$. Therefore we have that 
\[\op{dim}_{\F_p} \op{Sel}_{\op{BK}}(A'/\Q)[p]\geq n/2-\sum_{\ell\in T\cup \{q_1, \dots, q_m\}} \delta_\ell(A').\]
It follows from Lemma \ref{explicit formula for tamagawa} that $\delta_\ell(A')\leq 2$ if $\ell\neq p$ and $\delta_p(A')\leq 6$. The result follows.
\end{proof}

Let $n$ be any large positive integer. We construct a lift $\rho: \op{G}_X \rightarrow \op{GL}_2(\Z_p)$ of $\bar{\rho}$ satisfying local conditions $\cC_\ell$ at each prime $\ell\in X$, satisfying the conditions of Proposition \ref{type 1,2,3 propn}. This is done so that $m\leq C$, where $C=C(\bar{\rho})$ is a positive constant which depends only on $\bar{\rho}$. Note that the set of primes $T$ depends only on $\bar{\rho}$. It then follows from \eqref{dimension lower bound in terms of m and n} that the dimension of $\op{Sel}_{\op{BK}} (A'/\Q)[p]$ is large.

\begin{proposition}\label{prop bounding dim of Selmer}
    For any $n>0$, there are $n$ trivial primes $Y_n=\{\ell_1, \dots, \ell_n\}$ disjoint from $T$ with $(\cC_{\ell_i}, \cN_{\ell_i})$ of Type \rm{III}, such that $\beta_{|\ell_i}=1$ and
    \[ \dim H^1_{\cN}(\op{G}_{T\cup Y_n}, \op{Ad}^0\bar{\rho})\leq \op{max}\{\dim H^1_{\cN}(\op{G}_{T}, \op{Ad}^0\bar{\rho}), 3\}.\]
\end{proposition}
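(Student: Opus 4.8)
The plan is to build $Y_n$ by adjoining trivial primes one at a time, at each stage choosing a Type~III prime $\ell$ with $\beta_{|\ell}=1$ by a Chebotarev argument so that $\dim H^1_{\cN}(\op{G}_{\bullet},\op{Ad}^0\bar{\rho})$ never grows past $\max\{\text{its previous value},\,3\}$.

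First I would record two facts about the module $M=\op{Ad}^0\bar{\rho}$. Since $\bar{\rho}$ is indecomposable, $H^0(\Q,M)=0$; and since the Jordan--H\"older constituents of $M^*\simeq M(1)$ are $\F_p(\varphi\bar{\chi})$, $\F_p(\bar{\chi})$, $\F_p(\varphi^{-1}\bar{\chi})$, none of which is trivial (here $p\ge5$ and $\varphi\notin\{\bar{\chi},\bar{\chi}^{-1}\}$), also $H^0(\Q,M^*)=0$. Feeding these into Wiles' formula \eqref{wiles formula}, together with the local contributions $\dim\cN_\ell-\dim H^0(\op{G}_\ell,M)$ --- which equal $0$ at every trivial prime (both sides $=3$), $0$ at every ramified prime $\ne p$, $+1$ at $p$ and $-1$ at $\infty$ --- one gets $\dim H^1_{\cN}(\op{G}_{T\cup Y},M)=\dim H^1_{\cN^\perp}(\op{G}_{T\cup Y},M^*)$ for every set $Y$ of trivial primes carrying Type~III conditions; in particular these dimensions are finite. (This equality is also what ultimately keeps bounded the number of Type~I primes used later to annihilate the dual Selmer group.) I will accordingly track $d_i:=\dim H^1_{\cN}(\op{G}_{\Sigma_i},M)$, where $\Sigma_i:=T\cup\{\ell_1,\dots,\ell_i\}$, and show each $\ell_{i+1}$ can be chosen with $d_{i+1}\le\max\{d_i,3\}$; iterating then gives $d_n\le\max\{d_0,3\}$, which is the assertion.

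The key local input, read off from Lemma~\ref{Nl lemma}, is that for a Type~III trivial prime $\ell$ the space $\cN_\ell$ meets the unramified subspace $H^1_{\op{nr}}(\op{G}_\ell,M)$ in exactly one line, so $\cN_\ell/(\cN_\ell\cap H^1_{\op{nr}}(\op{G}_\ell,M))$ is two-dimensional. Standard inflation--restriction together with this then shows that for any trivial prime $\ell\notin\Sigma_i$,
\[
d_{i+1}\ \le\ \dim\ker(r_\ell)+2,\qquad r_\ell\colon H^1_{\cN}(\op{G}_{\Sigma_i},M)\longrightarrow H^1_{\op{nr}}(\op{G}_\ell,M)\big/\bigl(\cN_\ell\cap H^1_{\op{nr}}(\op{G}_\ell,M)\bigr),
\]
the map $r_\ell$ being ``restrict at $\ell$, then project''. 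Thus it suffices to choose $\ell_{i+1}$ so that $r_{\ell_{i+1}}$ has rank at least $\min\{2,d_i-1\}$ when $d_i\ge2$ (no condition when $d_i\le1$, where already $d_{i+1}\le d_i+2\le3$), for then $d_{i+1}\le\max\{d_i,3\}$ in all cases. To do this I would fix a basis $g_1,\dots,g_{d_i}$ of $H^1_{\cN}(\op{G}_{\Sigma_i},M)$ and apply the Type~III analogue of Proposition~\ref{inf many triv primes f psi conditions}: keep conditions (2)--(5) there (condition (2), $\ell\equiv1+2p\pmod{p^2}$, is needed so that $\rho_{2|\ell}$ is forced into the shape required by the Type~III condition $\cC_\ell$; conditions (4) and (5), applied to suitable combinations of the $g_j$ together with a dual class, ensure their restrictions at $\ell$ carry no spurious relation and so make $r_\ell$ as surjective as possible, of rank $\ge\min\{2,d_i-1\}$ by Lemma~\ref{Nl lemma} and condition (3)), replace condition (1) by ``$(\cC_\ell,\cN_\ell)$ of Type~III'', and add condition (6) applied to the character $\beta$, which forces $\beta_{|\ell}=1$. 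As in the proof of Proposition~\ref{inf many triv primes f psi conditions}, each condition is a non-empty Chebotarev condition and they are compatible, since condition (6) is a splitting condition in the prime-to-$p$ extension $\Q(\beta)/\Q$ and is therefore independent of the remaining ($p$-power) conditions. Taking $\Sigma_0=T$ closes the induction.

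The hard part is the bookkeeping underlying ``$r_\ell$ has rank $\ge\min\{2,d_i-1\}$'': a Type~III prime, unlike a Type~I prime, carries a two-dimensional space of \emph{ramified} directions inside $\cN_\ell$ (Lemma~\ref{Nl lemma}), and it is exactly these that can feed new global classes into the Selmer group and force the constant $3$ rather than $0$. One must verify, using the submodule filtration $U_1\subset U_2\subset\op{Ad}^0\bar{\rho}$ and the conditions (4)--(5) of the Type~III analogue of Proposition~\ref{inf many triv primes f psi conditions}, that the image of $(g_1,\dots,g_{d_i})$ on the relevant Galois group is not trapped inside a common two-plane through the line $\cN_\ell\cap H^1_{\op{nr}}(\op{G}_\ell,M)$, so that the desired Frobenius class in the Galois group of the compositum field $\cF$ is non-empty --- the analogue of, but more delicate than, the compatibility check carried out in Proposition~\ref{inf many triv primes f psi conditions}.
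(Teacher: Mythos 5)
Your inductive skeleton --- adjoin trivial primes one at a time, each time controlling the growth of $\dim H^1_{\cN}$ --- matches the paper's. But the key step is not established, and the paper's actual route around the difficulty is different in a way that matters.

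You reduce the inductive step to showing that the restriction map
\[
r_\ell\colon H^1_{\cN}(\op{G}_{\Sigma_i},M)\longrightarrow H^1_{\op{nr}}(\op{G}_\ell,M)\big/\bigl(\cN_\ell\cap H^1_{\op{nr}}(\op{G}_\ell,M)\bigr)
\]
has rank at least $\min\{2,d_i-1\}$; since the target is two-dimensional, when $d_i\ge3$ you need $r_\ell$ to be \emph{surjective}, i.e.\ you need two classes in the \emph{old} Selmer group whose restrictions at $\sigma_\ell$ are linearly independent modulo the one-dimensional line $\cN_\ell\cap H^1_{\op{nr}}$. Proposition~\ref{inf many triv primes f psi conditions} does not give this: condition (3) there constrains the value $f(\sigma_\ell)$ of \emph{one} class $f$ (so it controls $r_\ell(f)\neq0$, i.e.\ rank $\ge1$), and there is no analogous simultaneous condition forcing a second old Selmer class to have an independent image. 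Controlling the pair of images $(g_1(\sigma_\ell),g_2(\sigma_\ell))$ modulo a line is a genuinely stronger Chebotarev constraint --- it requires knowing that the joint image of $\op{Gal}(\Q(\bar\rho,g_1,\dots,g_{d_i})/\Q(\bar\rho))$ in $M^{d_i}$ is large enough in the relevant direction, which you flag as ``the hard part'' but do not verify; it is exactly the part that could fail. With only rank $\ge1$ you would get $d_{i+1}\le d_i+1$, which is useless.

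The paper avoids this entirely. Its second independent element of $\ker\pi_Y'$ is not a second old Selmer class but a \emph{new, ramified} class $h_1$, obtained from the vanishing $\Sh^2_{T}(U_1)=0$ together with condition (4) ($\gamma_{|\ell}=0$ for $\gamma\in H^1(\op{G}_Y,U_1^*)$), and its independence from $f$ modulo $\cN_\ell$ is read off from Lemma~\ref{Nl lemma} by looking at the $\tau_\ell$-component (which annihilates $h_1$'s dependence problem: $\cN_\ell$ classes have upper-right entry $0$ on $\tau_\ell$, while $h_1(\tau_\ell)$ has upper-right entry $1$). The dimension count is then closed by the dual-Selmer class $\psi$, via \cite[Fact~18(2)]{hamblenramakrishna}, which gives the exact statement $\dim\ker\pi_Y'=d_i+2$; your write-up does not actually use $\psi$ in the counting. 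In short, the paper needs only one old Selmer element with a nondegeneracy condition at $\sigma_\ell$, plus one new ramified class coming from $\Sh$-vanishing and the $U_1^*$-condition, plus the dual class for the count --- a weaker and verifiable set of Chebotarev conditions than the two-independent-restrictions condition your argument hinges on.
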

\begin{proof}
The proof of the result is similar to that of \cite[Proposition 46]{hamblenramakrishna}. We provide a sketch of the details here. We inductively choose trivial primes $\ell_1, \dots, \ell_n$ such that $\beta_{|\ell_i}=1$ and 
    \[\dim H^1_{\cN}(\op{G}_{T\cup Y_{i}\cup\{\ell_{i+1}\}}, \op{Ad}^0\bar{\rho})\leq \op{max}\{\dim H^1_{\cN}(\op{G}_{T\cup Y_{i}}, \op{Ad}^0\bar{\rho}), 3\},\] where $Y_i=\{\ell_1, \dots, \ell_i\}$. Set $Y:=T\cup Y_i$, we show that there are infinitely many trivial primes $\ell$ such that $\beta_{|\ell}=1$ and with $(\cC_\ell, \cN_\ell)$ of Type \rm{III} for which 
    \[\dim H^1_{\cN}(\op{G}_{Y\cup\{\ell\}}, \op{Ad}^0\bar{\rho})\leq \op{max}\{\dim H^1_{\cN}(\op{G}_{Y}, \op{Ad}^0\bar{\rho}), 3\}.\]
First, consider the case when $\dim H^1_{\cN}(\op{G}_{Y}, \op{Ad}^0\bar{\rho})=0$. Note that the kernel of the map 
\begin{equation}\label{kernel of map}H^1_{\cN}(\op{G}_{Y\cup\{\ell\}}, \op{Ad}^0\bar{\rho})\rightarrow \frac{H^1(\Q_\ell, \op{Ad}^0\bar{\rho})}{H^1_{\op{nr}}(\Q_\ell, \op{Ad}^0\bar{\rho})}\end{equation} is contained in $H^1_{\cN}(\op{G}_{Y}, \op{Ad}^0\bar{\rho})$. In greater detail, if 
\[\alpha\in H^1_{\cN}(\op{G}_{Y\cup\{\ell\}}, \op{Ad}^0\bar{\rho})\] is in the kernel of \eqref{kernel of map} then it must be unramified at $\ell$; therefore, $\alpha\in H^1(\op{G}_Y, \op{Ad}^0\bar{\rho})$ via inflation. On the other hand, $\alpha$ satisfies all the Selmer conditions $\cN_q$ for all $q\in Y$, it follows that $\alpha\in H^1_{\cN}(\op{G}_Y, \op{Ad}^0\bar{\rho})$. Therefore $\alpha=0$ and the map \eqref{kernel of map} is injective. For any trivial prime $\ell$, one checks that
\[\op{dim}\left(\frac{H^1(\Q_\ell, \op{Ad}^0\bar{\rho})}{H^1_{\op{nr}}(\Q_\ell, \op{Ad}^0\bar{\rho})}\right)=3,\]and thus we find that 
\[\dim H^1_{\cN}(\op{G}_{Y\cup\{\ell\}}, \op{Ad}^0\bar{\rho})\leq 3.\]
\par Thus, assume that $\dim H^1_{\cN}(\op{G}_{Y}, \op{Ad}^0\bar{\rho})>0$. We show in this case that 
\[\dim H^1_{\cN}(\op{G}_{Y\cup\{\ell\}}, \op{Ad}^0\bar{\rho})\leq \dim H^1_{\cN}(\op{G}_{Y}, \op{Ad}^0\bar{\rho})\] for infinitely many trivial primes $\ell$ for which $\beta_{|\ell}=1$ and with $(\cC_\ell, \cN_\ell)$ of Type \rm{III}. 

\par It follows from \eqref{wiles formula}, \eqref{equation for dimension of Nell} and \cite[p. 938, l. 18]{hamblenramakrishna} that
\begin{equation}\label{selmer equal dselmer eqn}\dim H^1_{\cN^\perp}(\op{G}_{Y}, \op{Ad}^0\bar{\rho}^*) =\dim H^1_{\cN}(\op{G}_{Y}, \op{Ad}^0\bar{\rho}).\end{equation} Thus, there are non-zero elements $\psi\in H^1_{\cN^\perp}(\op{G}_{Y}, \op{Ad}^0\bar{\rho}^*)$ and $f\in \dim H^1_{\cN}(\op{G}_{Y}, \op{Ad}^0\bar{\rho})$. 
    
According to Proposition \ref{inf many triv primes f psi conditions}, there are infinitely many trivial primes $\ell$ for which 
    \begin{enumerate}
    \item $\beta_{|\ell}=1$,
    \item $f(\sigma_\ell)\neq \mtx{-a}{a}{b}{a}$ for any values of $a,b\in \F_q$, 
    \item $\gamma_{|\ell}=0$ for all $\gamma\in H^1(\op{G}_Y, U_1^*)$,
    \item $\psi_{|\ell}\neq 0$ and $\psi$ can be extended to a basis of $H^1_{\cN^\perp}(\op{G}_Y, \op{Ad}^0\bar{\rho}^*)$ such that $\alpha_{|\ell}=0$ for all other elements $\alpha$ of the basis.
\end{enumerate}
    We pick one such prime $\ell\notin Y$ and set $(\cC_\ell, \cN_\ell)$ to be of Type \rm{III}. Since $\gamma_{|\ell}=0$ for all $\gamma\in H^1(\op{G}_{Y}, U_1^*)$, it follows from \cite[Fact 18 (1) and p. 941, ll. 2-7]{hamblenramakrishna} that the maps 
    \begin{equation}\label{Ud equation}\begin{split}
        & H^1(\op{G}_{Y}, U_1)\rightarrow \bigoplus_{w\in Y} H^1(\Q_w, U_1),\\
        & H^1(\op{G}_{Y\cup \{\ell\}}, U_1)\rightarrow \bigoplus_{w\in Y} H^1(\Q_w, U_1)
    \end{split}\end{equation}
    have the same image, and the dimension of the kernel of the second map is $1$ more than that of the first map.
    Thus, there is an element $h_1\in H^1(\op{G}_{Y\cup\{\ell\}}, \op{Ad}^0\bar{\rho})$ such that 
    \[h_1(\sigma_\ell)=\mtx{0}{a}{0}{0}\text{ and }h_1(\tau_\ell)=\mtx{0}{1}{0}{0},\] where $a\in \F_q$.
    Suppose that there is a linear combination $sh_1+tf$ contained in $\cN_\ell$. We will show that $s=t=0$. Note that $f$ is unramified at $\ell$ and therefore $sh_1+t f$ maps $\tau_\ell$ to $\mtx{0}{s}{0}{0}$. On the one hand, according to Lemma \ref{Nl lemma}, $\cN_\ell$ consists of classes which take $\tau_\ell$ to $\mtx{\ast}{0}{\ast}{\ast}$. Thus $s$ must be $0$. On the other hand, since $f(\sigma_\ell)\neq \mtx{-a}{a}{b}{a}$ for all $a,b$, it follows from Lemma \ref{Nl lemma} that no non-trivial linear combination of $h_1$ and $f$ lies in $\cN_\ell$. From the proof of \cite[Proposition 46 and Fact 18 (2)]{hamblenramakrishna}, the kernel of 
    \begin{equation}\label{first map}H^1(\op{G}_{Y\cup\{\ell\}}, \op{Ad}^0\bar{\rho})\xrightarrow{\pi_Y'} \bigoplus_{w\in Y} H^1(\Q_w,\op{Ad}^0\bar{\rho})/\cN_w \end{equation}
    has dimension $2$ more than the kernel of
    \begin{equation}\label{second map}H^1(\op{G}_{Y}, \op{Ad}^0\bar{\rho})\xrightarrow{\pi_Y}\bigoplus_{w\in Y} H^1(\Q_w,\op{Ad}^0\bar{\rho})/\cN_w .\end{equation} 
    The kernel of \eqref{second map} equals $H^1_{\cN}(\op{G}_{Y}, \op{Ad}^0\bar{\rho})$ and we have that \begin{equation}\label{equation on ker pi Y 1}\dim \ker \pi_Y'=\dim \ker \pi_Y +2= \dim H^1_{\cN}(\op{G}_Y, \op{Ad}^0\bar{\rho})+2.\end{equation}
    Note that the kernel of \eqref{first map} contains $H^1_{\cN}(\op{G}_{Y\cup\{\ell\}}, \op{Ad}^0\bar{\rho})$ and so there is an exact sequence 
\[0\rightarrow H^1_{\cN}(\op{G}_{Y\cup\{\ell\}}, \op{Ad}^0\bar{\rho})\rightarrow \op{ker} \pi_Y'\rightarrow \frac{H^1(\Q_\ell, \op{Ad}^0\bar{\rho})}{\cN_\ell}.\]
 Since there are two elements in the kernel of \eqref{first map} for which no non-trivial linear combination lies in $\cN_\ell$, it thus follows that
\begin{equation}\label{equation on ker pi Y 2}\dim \ker\pi_Y'\geq \dim H^1_{\cN}(\op{G}_{Y\cup\{\ell\}}, \op{Ad}^0\bar{\rho})+2.\end{equation}
From \eqref{equation on ker pi Y 1} and \eqref{equation on ker pi Y 2} we deduce that 
\[\dim H^1_{\cN}(\op{G}_{Y\cup\{\ell\}}, \op{Ad}^0\bar{\rho})\leq \dim H^1_{\cN}(\op{G}_Y, \op{Ad}^0\bar{\rho}),\]
and this concludes the proof.
\end{proof}
\begin{theorem}\label{main thm aux}
   Let $p\geq 5$ be a prime number, $N$ be a natural number and
   \[\bar{\rho}=\mtx{\varphi}{\ast}{}{1}:\op{G}_{\Q}\rightarrow \op{GL}_2(\F_p),\] such that
\begin{enumerate}
    \item $\varphi=\bar{\chi} \beta^2$ where $\beta:\op{G}_{\Q}\rightarrow \F_p^\times$ is an odd character which is unramified at $p$, 
    \item $\beta^2\neq 1$,
    \item $\bar{\rho}$ is indecomposable.
\end{enumerate}
   Then, there exists a Galois representation $\rho: \op{G}_{\Q}\rightarrow \op{Aut}_{\Z_p}(T)\xrightarrow{\sim}\op{GL}_2(\Z_p)$ lifting $\bar{\rho}$ arising from a $p$-ordinary Hecke eigen-cuspform of weight $2$ such that 
   \[\dim_{\F_p}\op{Sel}_{\op{BK}}(A'/\Q)[p]\geq N.\]
\end{theorem}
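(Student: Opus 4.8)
The plan is to combine all the machinery assembled in Sections \ref{s 3}--\ref{s 4}: we will run the Hamblen--Ramakrishna lifting argument of Section \ref{s 4}, but with the set of ramified primes enlarged so as to include $n$ trivial primes of Type III (where $n$ will be taken comparable to $2N$), and then invoke Proposition \ref{type 1,2,3 propn} to read off the lower bound on $\dim_{\F_p}\op{Sel}_{\op{BK}}(A'/\Q)[p]$.

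First I would fix $n$ large (to be specified at the end) and apply Proposition \ref{prop bounding dim of Selmer} to produce $n$ trivial primes $Y_n=\{\ell_1,\dots,\ell_n\}$ disjoint from $T$ with $(\cC_{\ell_i},\cN_{\ell_i})$ of Type III, $\beta_{|\ell_i}=1$, and such that the Selmer group $\dim H^1_{\cN}(\op{G}_{T\cup Y_n},\op{Ad}^0\bar{\rho})$ is bounded by $\max\{\dim H^1_{\cN}(\op{G}_T,\op{Ad}^0\bar{\rho}),3\}$, a constant depending only on $\bar{\rho}$. Next I would enter the two-step lifting procedure described after Proposition \ref{Nl preserves Cl for mod p^3}: construct $\rho_2$ over $\cO/p^2$ via the method of \cite{KLR} using a further finite set $Z$ of Type II trivial primes disjoint from $T\cup Y_n$, lift to $\xi_3$ over $\cO/p^3$ using $\Sh^2=0$, then "kill the dual Selmer group" by adjoining finitely many more Type I trivial primes (applying Proposition \ref{inf many triv primes f psi conditions} and \cite[Proposition 46]{hamblenramakrishna}) to obtain a finite set $X\supset T\cup Y_n$ with $H^1_{\cN}(\op{G}_X,\op{Ad}^0\bar{\rho})=H^1_{\cN^\perp}(\op{G}_X,\op{Ad}^0\bar{\rho}^*)=0$, hence an isomorphism \eqref{global selmer isomorphism 1}; then the inductive twisting argument produces a compatible family $\rho_n$ and, in the limit, a modular lift $\rho:\op{G}_X\rightarrow\op{GL}_2(\Z_p)$ satisfying $\cC_\ell$ at all $\ell\in X$, with weight $2$ and nebentype $\widetilde{\beta}^2$, ordinary at $p$; modularity follows from Skinner--Wiles since $\varphi_{|p}=\bar\chi_{|p}\neq 1$.

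The key bookkeeping point, and the crux of the argument, is controlling the parameter $m$ in Proposition \ref{type 1,2,3 propn}: in the notation there, the $q_j$ are the primes where $\cC_{q_j}$ is of Type I, namely the primes in $T\setminus\Sigma$ together with the Type I primes introduced during the "killing the dual Selmer group" step. I must check that the number of such primes is bounded by a constant $C=C(\bar\rho)$ independent of $n$. The primes in $T\setminus\Sigma$ depend only on $\bar\rho$; the Type I primes used to kill the dual Selmer group number at most $\dim H^1_{\cN}(\op{G}_{T\cup Y_n},\op{Ad}^0\bar\rho)+\dim H^1_{\cN^\perp}(\op{G}_{T\cup Y_n},\op{Ad}^0\bar\rho^*)\le 2\max\{\dim H^1_{\cN}(\op{G}_T,\op{Ad}^0\bar\rho),3\}$ by Proposition \ref{prop bounding dim of Selmer} and Wiles' formula \eqref{wiles formula}, hence again $O_{\bar\rho}(1)$; and the Type II primes $Z$, though also finite, contribute nothing to $m$ since by Lemmas \ref{lemma type II local} and \ref{lemma type III local} they have $\delta_\ell(A')=0$ and so are absorbed into the "Type II or III" bucket in Proposition \ref{type 1,2,3 propn}. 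This gives $m\le C$ with $C=C(\bar\rho)$, and $\#T=\#T(\bar\rho)$.

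Finally, with $X$, $\rho$, and the primes $\ell_1,\dots,\ell_n$ (Type III, $\beta_{|\ell_i}=1$) and $q_1,\dots,q_m$ (Type I) in hand, all hypotheses of Proposition \ref{type 1,2,3 propn} are met, so
\[\dim_{\F_p}\op{Sel}_{\op{BK}}(A'/\Q)[p]\ge n/2-2(m+\#T)-4\ge n/2-2C'-4,\]
where $C'=C'(\bar\rho)$ is a constant. Choosing $n\ge 2N+4C'+8$ yields $\dim_{\F_p}\op{Sel}_{\op{BK}}(A'/\Q)[p]\ge N$, which is exactly the assertion of Theorem \ref{main thm aux}. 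The one genuine obstacle is the uniformity claim on $m$: one must verify carefully that enlarging $Y_n$ (taking $n\to\infty$) does not force the dual-Selmer-killing step to use a growing number of Type I primes — this is precisely what Proposition \ref{prop bounding dim of Selmer} is engineered to guarantee, by keeping $\dim H^1_{\cN}(\op{G}_{T\cup Y_n},\op{Ad}^0\bar\rho)$ bounded independently of $n$.
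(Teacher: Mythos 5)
Your proposal follows essentially the same route as the paper's proof: select $n$ Type~III trivial primes via Proposition~\ref{prop bounding dim of Selmer}, run the Hamblen--Ramakrishna two-step lifting (the KLR step producing $Z$, lift to mod~$p^3$, kill the dual Selmer group with Type~I primes, inductive twisting), invoke Skinner--Wiles for modularity, and read off the lower bound from Proposition~\ref{type 1,2,3 propn}. You also correctly identify the crux: the uniformity of $m$ in $n$, which Proposition~\ref{prop bounding dim of Selmer} is built to deliver.

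One bookkeeping slip worth fixing. You bound the number $m$ of Type~I killing primes by a function of $\dim H^1_{\cN}(\op{G}_{T\cup Y_n},\op{Ad}^0\bar\rho)$, but the killing step runs \emph{after} the auxiliary Type~II primes $Z$ from the KLR step have been adjoined, and those can enlarge the Selmer group. The paper accounts for this: for a trivial prime $\ell$, $\dim\bigl(H^1(\Q_\ell,\op{Ad}^0\bar\rho)/H^1_{\op{nr}}(\Q_\ell,\op{Ad}^0\bar\rho)\bigr)=3$, and since $\#Z\le 2$ one gets $\dim H^1_{\cN}(\op{G}_{T\cup Y_n\cup Z},\op{Ad}^0\bar\rho)\le \max\{m',3\}+6$, still a constant depending only on $\bar\rho$. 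So the patch is immediate and your conclusion stands, but as written your bound on $m$ is computed over the wrong set of primes. Two smaller imprecisions: the $q_j$ of Proposition~\ref{type 1,2,3 propn} are by hypothesis disjoint from $T$, so the Type~I primes in $T\setminus\Sigma$ are counted in $\#T$ rather than in $m$ (this does not change the estimate); and $\varphi_{|p}\ne 1$ follows because $\varphi_{|\op{I}_p}=\bar\chi_{|\op{I}_p}$, not because $\varphi_{|p}=\bar\chi_{|p}$ (the two characters agree only on inertia since $\beta$ is merely unramified at $p$, not trivial on all of $\op{G}_p$).
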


\begin{proof}
   Recall that $\Sigma$ consists of $\ell$ at which $\bar{\rho}$ is ramified and $\ell=p$. According to \cite[Proposition 13]{hamblenramakrishna}, there is a finite set of primes $T\supset \Sigma$ such that $T\setminus \Sigma$ consists of trivial primes and $\Sh^1_T(M)=0$ for all $M\in \mathcal{M}$. At each of the primes $\ell\in T\setminus \Sigma$ the pair $(\cC_\ell, \mathcal{N}_\ell)$ is set to be of Type \rm{I} (see \cite[p. 930]{hamblenramakrishna}). Let $n$ be a natural number. (It turns out that $n/2$ will be a rough lower bound for the size of the $p$-torsion in the Bloch-Kato Selmer group.) Setting 
    \begin{equation}\label{defn of m'}m':=\dim H^1_{\cN} (\op{G}_{T}, \op{Ad}^0\bar{\rho}),\end{equation}
    it follows from Proposition \ref{prop bounding dim of Selmer} that there are $n$ trivial primes $Y_n=\{\ell_1, \dots, \ell_n\}$ disjoint from $T$ and $(\cC_{\ell_i}, \cN_{\ell_i})$ of Type \rm{III}, such that $\beta_{|\ell_i}=1$, and
    \[ \dim H^1_{\cN}(\op{G}_{T\cup Y_n}, \op{Ad}^0\bar{\rho})\leq \op{max}\{m', 3\}.\]
    Now let $X_1:=Y_n\cup T$. The obstruction to lifting $\bar{\rho}$ to a reducible \[\xi_2:\op{G}_{T}\rightarrow \op{GL}_2(\Z/p^2)\] lies in $H^2(\op{G}_{T}, U_1)$. Since $\bar{\rho}$ satisfied liftable conditions $\cC_\ell$ at all the primes $\ell\in X_1$, it follows that $\bar{\rho}_{|\ell}$ is liftable for all $\ell\in X_1$, and hence this obstruction lies in $\Sh^2_{T}(U_1)$. Note that $\Sh^1_{T}(U_1^*)=0$ and this implies that $\Sh^2_{T}(U_1)=0$ since $\Sh^1_{T}(U_1^*)\simeq \Sh^2_{T}(U_1)^\vee$. Thus, $\bar{\rho}$ lifts to $\xi_2$. Now for each prime $\ell\in X_1$, choose a local class $z_\ell\in H^1(\Q_\ell, \op{Ad}^0\bar{\rho})$ such that 
    \[(\op{Id}+z_\ell p) \xi_{2|\ell} \in \cC_\ell.\] It follows from \cite[Theorem 41]{hamblenramakrishna} (which uses the method of \cite{KLR}) that there is a set $Z$ of at most $2$ trivial primes $\ell$ with $(\cC_\ell, \cN_\ell)$ of Type \rm{II} and 
    \[z\in H^1(\op{G}_{X_1\cup Z}, \op{Ad}^0\bar{\rho})\] such that
    \begin{itemize}
        \item $(\op{Id}+p z) \xi_2$ satisfies $\cC_\ell$ for $\ell\in Z$,
        \item $z$ restricts to $z_\ell$ for all $\ell\in X_1$.
    \end{itemize}
Thus by construction, the twist $(\op{Id}+p z) \xi_2$ satisfies $\cC_\ell$ for all primes $\ell\in Z\cup X_1$. Setting $X_2:=X_1\cup Z$, and $\rho_2:=(\op{Id}+p z) \xi_2: \op{G}_{X_2}\rightarrow \op{GL}_2(\Z/p^2)$, we find that $\rho_2$ satisfies liftable conditions at all primes $\ell\in X_2$. Since $\Sh^2_{X_2}(\op{Ad}^0\bar{\rho})=0$, it follows that $\rho_2$ lifts to $\xi_3:\op{G}_{X_2}\rightarrow \op{GL}_2(\Z/p^3)$. The kernel of
\[H^1_{\cN}(\op{G}_{X_2}, \op{Ad}^0\bar{\rho})\rightarrow \bigoplus_{\ell\in Z} \frac{H^1(\Q_\ell, \op{Ad}^0\bar{\rho})}{H^1_{\op{nr}}(\Q_\ell, \op{Ad}^0\bar{\rho})}\] is contained in $H^1_{\cN}(\op{G}_{X_1}, \op{Ad}^0\bar{\rho})$. For a trivial prime $\ell$, it is easy to see that 
\[\op{dim}\left(\frac{H^1(\Q_\ell, \op{Ad}^0\bar{\rho})}{H^1_{\op{nr}}(\Q_\ell, \op{Ad}^0\bar{\rho})}\right)=3,\]and thus we find that 
\[\dim H^1_{\cN}(\op{G}_{X_2}, \op{Ad}^0\bar{\rho})\leq 3\# Z+\dim H^1_{\cN}(\op{G}_{X_1}, \op{Ad}^0\bar{\rho})\leq \op{max}\{m', 3\}+6.\]
It then follows from \cite[Proposition 46]{hamblenramakrishna} and Proposition \ref{inf many triv primes f psi conditions} that there are $m\leq \op{max}\{m',3\}+6$ trivial primes $q_1, \dots, q_m$, such that 
    \[H^1_{\cN}(\op{G}_X, \op{Ad}^0\bar{\rho})=0,\] where $X:=X_2\cup\{q_1, \dots, q_m\}$. Here, $(\cC_{q_i}, \cN_{q_i})$ is chosen of Type \rm{I} for $i=1, \dots, m$ and $\rho_{2|q_i}$ satisfies $\cC_{q_i}$ for $i=1, \dots, m$. We remark that although the primes $q_1, \dots, q_m$ depend on $\xi_3$, their cardinality $m$ is bounded in terms of $m'$ and hence depends only on $\bar{\rho}$. Recall from \eqref{selmer equal dselmer eqn} that
\[\dim H^1_{\cN^\perp}(\op{G}_{X}, \op{Ad}^0\bar{\rho}^*) =\dim H^1_{\cN}(\op{G}_{X}, \op{Ad}^0\bar{\rho})=0.\]

The vanishing of these Selmer groups and \eqref{PT les} implies that there is an isomorphism 
\begin{equation}\label{global selmer isomorphism}H^1(\op{G}_X, \op{Ad}^0\bar{\rho})\xrightarrow{\sim} \bigoplus_{\ell\in X} \left(\frac{H^1(\Q_\ell, \op{Ad}^0\bar{\rho})}{\mathcal{N}_\ell}\right).\end{equation}
For each prime $\ell \in X$, choose $f_\ell \in H^1(\Q_\ell, \operatorname{Ad}^0 \bar{\rho})$ such that 
\[
(\operatorname{Id} + p^2 f_\ell) \xi_{3|\ell} \in \mathcal{C}_\ell.
\]
Since $\mathcal{N}_\ell$ preserves $\mathcal{C}_\ell(\Z/p^3)$, any further translate of $f_\ell$ by an element of $\mathcal{N}_\ell$ also satisfies this condition. Thus, $f_\ell$ can be viewed as an element in the quotient $\frac{H^1(\Q_\ell, \operatorname{Ad}^0 \bar{\rho})}{\mathcal{N}_\ell}$.

The tuple $(f_\ell) \in \bigoplus_{\ell \in X} \frac{H^1(\Q_\ell, \operatorname{Ad}^0 \bar{\rho})}{\mathcal{N}_\ell}$ corresponds to a global cohomology class $f \in H^1(\operatorname{G}_X, \operatorname{Ad}^0 \bar{\rho})$ because the map \eqref{global selmer isomorphism} is an isomorphism. We set $\rho_3 = (\operatorname{Id} + p^2 f)\xi_3$, noting that $\rho_3$ satisfies the liftable deformation conditions $\mathcal{C}_\ell$ for all $\ell \in X$. The obstruction to lifting $\rho_3$ to $\xi_4: \operatorname{G}_X \rightarrow \operatorname{GL}_2(\Z/p^4)$ lies in $\Sh_X^2(\operatorname{Ad}^0 \bar{\rho})$, which is zero since $X$ contains $T$. Thus, a lift $\xi_4: \operatorname{G}_X \rightarrow \operatorname{GL}_2(\Z/p^4)$ exists. There is a tuple $(g_\ell) \in \bigoplus_{\ell \in X} \frac{H^1(\Q_\ell, \operatorname{Ad}^0 \bar{\rho})}{\mathcal{N}_\ell}$ defined by the property that for all $\ell \in X$,
\[
(\operatorname{Id} + p^3 g_\ell) \xi_{4|\ell} \in \mathcal{C}_\ell.
\]
This tuple corresponds to a global cohomology class $g \in H^1(\operatorname{G}_X, \operatorname{Ad}^0 \bar{\rho})$ because the map \eqref{global selmer isomorphism} is an isomorphism. Setting $\rho_4 = (\operatorname{Id} + p^3 g)\xi_4$, we find that $\rho_4$ satisfies the conditions $\mathcal{C}_\ell$ for all $\ell \in X$.

Continuing this process inductively, we obtain a compatible family of deformations $\rho_n: \operatorname{G}_X \rightarrow \operatorname{GL}_2(\Z/p^n)$ that satisfy the conditions $\mathcal{C}_\ell$ for all $\ell \in X$. Let $\rho: \operatorname{G}_X \rightarrow \operatorname{GL}_2(\Z_p)$ denote the inverse limit $\varprojlim_n \rho_n$. We find that $\rho$ is odd and satisfies the conditions $\mathcal{C}_\ell$ at the primes $\ell \in X$, including $\mathcal{C}_p$, which ensures it is ordinary at $p$. By Lemma \ref{new lemma 1}, $\rho_2$ is irreducible and thus $\rho$ is irreducible. By the result of Skinner and Wiles \cite[Theorem on p. 6]{skinnerwiles}, $\rho$ is modular and arises from a lattice $T_f$. Recall that the determinant $\nu=\det\rho$ is given by $\nu=\chi \widetilde{\beta}^2$. The weight is $k=2$, and that $\psi:=\widetilde{\beta}^{-1}$ satisfies $\psi^2=\chi\nu^{-1}$. 
We set $T:=T_f(\psi)$ to be the self dual twist of $T_f$ (not to be confused with the set of primes $T$ above), and let $T'$ and $A'$ be as at the beginning of Section \ref{s 5}.

    \par From Proposition \ref{type 1,2,3 propn}, we find that
     \[\dim \op{Sel}_{\op{BK}} (A'/\Q)[p]\geq n/2- 2 (m+\# T)-4\geq n/2- 2 (\op{max}\{m',3\}+8+\# T).\] Note that $m'$ is a constant which depends only on $\bar{\rho}$ on not on $\rho$ (see \eqref{defn of m'}). Also, the set $T$ only depends on $\bar{\rho}$. On the other hand, $n$ can be chosen large enough so that the right side of the above inequality is $\geq N$. This proves our result. 
\end{proof}

\begin{proof}[Proof of Theorem \ref{main thm}]
Given $p\geq 5$, it suffices to show that a representation $\bar{\rho}$ satisfying the conditions of Theorem \ref{main thm aux} does always exist. By Dirichlet's theorem on arithmetic progressions, there is a prime $\ell$ such that $\ell-1$ is divisible by $p-1$. Let $\chi_\ell$ be the mod $\ell$ cyclotomic character and set $\beta':=\chi_\ell^{\frac{\ell-1}{p-1}}$. Note that $\beta'$ has order $p-1$ and thus fixing some obvious identifications, gives a surjective character $\beta':\op{G}_\Q\rightarrow \F_p^\times$. Since $p\geq 5$, $\beta'$ is not a quadratic character and thus $(\beta')^2\neq 1$. Moreover, $\beta'$ is unramified at $p$ since $\chi_\ell$ is. If $\beta'$ is odd, then we set $\beta:=\beta'$. On the other hand, if $\beta'$ is even, set $\beta:=\beta' \alpha$ where $\alpha$ is an odd quadratic character which is unramified at $p$. Having chosen $\beta$, choose $\ast$ to be a non-zero cohomology class in the infinite-dimensional vector space $ H^1(\op{G}_{\Q}, \F_p(\varphi))$, for $\varphi:=\bar{\chi}\beta^2$. The representation $\bar{\rho}$ satisfies the conditions of Theorem \ref{main thm aux}, from which the result follows. 
\end{proof}
\bibliographystyle{alpha}
\bibliography{references}
\end{document}